\theoremstyle{plain}
\newtheorem{theorem}{Theorem}[section]
\newtheorem{claim}{Claim}
\newtheorem{cor}[theorem]{Corollary}
\newtheorem{lemma}[theorem]{Lemma}
\newtheorem{prop}[theorem]{Proposition}
\theoremstyle{remark}
\newtheorem{notation}[theorem]{Notation}
\newtheorem{rmk}[theorem]{Remark}
\newtheorem*{rmk*}{Remark}
\newtheorem*{note*}{Note}
\theoremstyle{definition}
\newtheorem{defn}[theorem]{Definition}
\newtheorem*{defn*}{Definition}
\newtheorem{example}[theorem]{Example}
\newtheorem*{example*}{Example}
\newtheorem*{examples*}{Examples}
\numberwithin{equation}{section}
 \DeclareMathOperator{\Aut}{Aut} 
 \DeclareMathOperator{\id}{id} 
  \DeclareMathOperator{\Obj}{Obj}
\DeclareMathOperator{\Mor}{Mor}  
\DeclareMathOperator{\dom}{dom}  
 \DeclareMathOperator{\clsp}{{\overline{span}}}
\newcommand{\sk}[1]{\overline{#1}}
\newcommand{\field}[1]{\mathbb{#1}}
\newcommand{\CC}{\field{C}}
\newcommand{\FF}{\field{F}}
\newcommand{\NN}{\field{N}}
\newcommand{\N}{\field{N}}
\newcommand{\TT}{\field{T}}
\newcommand{\ZZ}{\field{Z}}
\newcommand{\Bb}{{\mathcal B}}
\newcommand{\Cc}{{\mathcal C}}
\newcommand{\Gg}{{\mathcal G}}
\newcommand{\Hh}{{\mathcal H}}
\newcommand{\Zz}{{\mathcal Z}}
\newcommand{\Exinfty}[1]{E^{\text{$#1$-$\infty$}}}
\newcommand{\Ekinfty}{\Exinfty{k}}
\newcommand{\spc}{\hskip2pt}
\author[Hazlewood, Raeburn, Sims, Webster]{Robert Hazlewood, Iain Raeburn, Aidan  Sims\\
and Samuel B.G. Webster}
\title[Higher-rank graphs]{\boldmath{On some fundamental results about higher-rank graphs and their $C^*$-algebras}}
\address{Robert Hazlewood\\ School of Mathematics and Statistics\\
University of New South Wales\\ Sydney\\
NSW 2052\\
Australia}
\email{robbiehazlewood@gmail.com}
\address{Iain Raeburn\\ Department of Mathematics and Statistics\\
University of Otago\\
PO Box 56\\Dunedin 9054\\
New Zealand}
\email{iraeburn@maths.otago.ac.nz}
\address{Aidan  Sims
and Samuel B.G. Webster\\ School of Mathematics and Applied Statistics\\
University of Wollongong\\
NSW 2522\\
Australia}
\email{asims@uow.edu.au, sbgwebster@gmail.com}
\date{8 October 2011}
\subjclass{Primary 05C20; Secondary 46L05}
\keywords{higher-rank graph, $C^*$-algebra, Cuntz-Krieger algebra}
\begin{document}

\begin{abstract}
Results of Fowler and Sims show that every $k$-graph is completely determined by its $k$-coloured
skeleton and collection of commuting squares. Here we give an explicit description of the $k$-graph
associated to a given skeleton and collection of squares and show that two $k$-graphs are
isomorphic if and only if there is an isomorphism of their skeletons which preserves commuting
squares. We use this to prove directly that each $k$-graph $\Lambda$ is isomorphic to the quotient
of the path category of its skeleton by the equivalence relation determined by the commuting
squares, and show that this extends to a homeomorphism of infinite-path spaces when the $k$-graph
is row finite with no sources. We conclude with a short direct proof of the characterisation,
originally due to Robertson and Sims, of simplicity of the $C^*$-algebra of a row-finite $k$-graph
with no sources.
\end{abstract}

\maketitle

\section{Introduction}

A $k$-graph is a combinatorial object akin to a directed graph, in which each path $\lambda$ has a
$k$-dimensional shape $d(\lambda) \in \NN^k$, called its degree, instead of a $1$-dimensional
length. $C^*$-algebras associated to graphs and $k$-graphs have attracted significant attention
recently because they at once encompass a great many interesting examples
\cite{BaumHajacEtAl:K-th05, Drinen2000, HS2002, PRRS2006}, and are remarkably tractable
\cite{DavidsonYang:CJM09, DavidsonYang:NYJM09, DHS2003, Evans2008, HS2004, JP2002, RS2007}. Indeed,
Spielberg \cite{Spielberg2007} showed how to construct every Kirchberg algebra from combinations of
graph $C^*$-algebras and $2$-graph $C^*$-algebras. However, $k$-graphs themselves are, from a
combinatorial point of view, substantially more complicated than their $1$-dimensional
counterparts, and one of the keys to using them effectively is a good visual description.

A crucial feature of $k$-graphs is the factorisation property, which says that, given any path
$\lambda$ and any decomposition $d(\lambda) = m+n$, there is a unique factorisation $\lambda =
\mu\nu$ such that $d(\mu) = m$ and $d(\nu) = n$. In particular, writing $e_1, \dots, e_k$ for the
generators of $\NN^k$, if $ef$ is a path with $d(e) = e_i$ and $d(f) = e_j$, then $d(ef) = e_j +
e_i$ so there is a unique expression $ef = f'e'$ where $d(f') = e_j$ and $d(e') = e_i$. This is
called a \emph{square} of $\Lambda$. We can regard the list $\Cc_\Lambda$ of all such squares as
data associated with the \emph{skeleton} of $\Lambda$, which is the $k$-coloured directed graph
$E_\Lambda$ with the same vertices as $\Lambda$ and with edges $\bigcup^k_{i=1} d^{-1}(e_k)$, where
edges of different degrees are coloured with different colours.

Theorem~2.2 of \cite{FS2002} characterises exactly which coloured graphs $E$ and collections $\Cc$
of squares arise from $k$-graphs; and \cite[Theorem~2.1]{FS2002} implies that for each such pair
$(E, \Cc)$ there is a unique $k$-graph up to isomorphism whose skeleton is $E$ and whose commuting
squares are those in $\Cc$. The latter theorem is an existence result; it does not explicitly
describe the $k$-graph $\Lambda_{E,\Cc}$. It is more or less folklore (and can be dug out of the
proof of \cite[Theorem~2.1]{FS2002}) that $\Lambda_{E,\Cc}$ can be described along the lines
outlined for $k=2$ in \cite[Section~6]{KP2000}: paths in $\Lambda_{E,\Cc}$ are described as paths
in $E$ in which the colours occur in a fixed preferred order. But this is unsatisfactory because it
is difficult to recognise a path when it is written as a concatenation of sub-paths, or to decide
when one path is a sub-path of another; to do so requires tedious calculations using the collection
$\Cc$ of squares.

In Section~\ref{sec:k-graphs} we provide a concrete description of the $k$-graph $\Lambda_{E,\Cc}$.
Inspired by the construction of $2$-graphs from two-dimensional shift-spaces in \cite{PRW2009}, we
show that the paths in $\Lambda$ can be regarded as coloured-graph morphisms from a collection of
model $k$-coloured graphs into $E$. An advantage of this construction is that under this
presentation, each path explicitly encodes all of its subpaths. In Section~\ref{sec:topology} we
use this to provide an explicit proof that $\Lambda$ is the quotient of the path category
$E^*_\Lambda$ of $E_\Lambda$ by the equivalence relation $\sim$ determined by $\Cc$. We then show
that the topology on the infinite-path space of $\Lambda$ coincides with the quotient topology on
$E^\infty{/}{\sim}$.  We also present an example showing that the corresponding statement is false
for boundary paths in non-row-finite $k$-graphs. Our final section gives a direct and elementary
proof that if $\Lambda$ is a row-finite $k$-graph with no sources, then $C^*(\Lambda)$ is simple if
and only $\Lambda$ is both aperiodic and cofinal (see Section~\ref{sec:simple} for details). This
result first appeared in \cite{RS2007}, but the proof there was indirect, proceeding via reference
to the results of \cite{KP2000}, which were proved using groupoid technology. Since aperiodicity
and cofinality have been characterised in a number of different ways in the literature, we use the
presentations which are best suited to the description of $\Lambda_{E,\Cc}$ from
Section~\ref{sec:k-graphs}: specifically, the description of aperiodicity introduced
in~\cite{RS2007}, and the cofinality condition of \cite{LS2010}. The key graph-theoretic component,
Lemma~\ref{lem_there_exists_lambda}, of our proof has already found applications elsewhere: it was
precisely the statement needed to establish the Cuntz-Krieger uniqueness theorem
\cite[Theorem~4.7]{ArandaClarkEtAl:xx11} for the Kumjian-Pask algebras introduced there.

\section{Background}

A \emph{directed graph} $E=(E^0,E^1,r,s)$ consists of countable sets $E^0, E^1$ and functions
$r,s:E^1 \to E^0$. Since all the graphs in this paper are directed, we will drop the adjective. We
call elements of $E^0$ \emph{vertices}, and elements of $E^1$ \emph{edges}. For an edge $e \in
E^1$, we call $s(e)$ the \emph{source} of $e$ and $r(e)$ the \emph{range} of $e$. A \emph{path of
length n} is a sequence $\mu = \mu_1 \mu_2\dots \mu_n$ of edges such that $s(\mu_i) = r(\mu_{i+1})$
for $1 \leq i \leq n-1$. We denote by $E^n$ the set of all paths of length $n$, and define $E^* :=
\bigcup_{n \in \NN} E^n$. We extend $r$ and $s$ to $E^*$ by setting $r(\mu) = r(\mu_1)$ and $s(\mu)
= s(\mu_n)$. By an infinite path in $E$, we mean a sequence $x = \nu_1\nu_2 \dots$ where
$r(\nu_{i+1}) = s(\nu_i)$ for all $i$, and we write $r(x) = r(\nu_1)$. We write $E^\infty$ for the
set of all infinite paths, and call $W_E := E^* \cup E^\infty$ the \emph{path space} of $E$.

For $k \in \NN$, a \emph{$k$-graph} is a pair $(\Lambda, d)$ where $\Lambda$ is a countable
category and $d$ is a functor from $\Lambda$ to $\NN^k$ which satisfies the \emph{factorisation
property}: for every $\lambda \in \Mor(\Lambda)$ and $m,n \in \NN^k$ with $d(\lambda) = m + n$,
there are unique elements $\mu,\nu \in \Mor(\Lambda)$ such that $\lambda = \mu \nu$, $d(\mu) = m$
and $d(\nu) = n$ (see \cite[Definition~1.1]{KP2000}). Elements $\lambda \in \Mor(\Lambda)$ are
called \emph{paths}, and by convention we write $\lambda \in \Lambda$ to mean $\lambda \in
\Mor(\Lambda)$. The functor $d$ is called the degree map.

For $m \in \NN^k$ and $v \in \Obj(\Lambda)$, we define $\Lambda^m := \{\lambda \in \Lambda :
d(\lambda) = m \}$ and $v\Lambda^m := \{ \lambda \in \Lambda^m : r(\lambda) = v\}$. More generally,
given $\lambda \in \Lambda$ and $F, G \subseteq \Lambda$, we define $\lambda G = \{\lambda \nu :
\nu \in G, r(\nu) = s(\lambda)\}$ and $F\lambda = \{\mu\lambda : \mu \in F, s(\mu) = r(\lambda)\}$;
and then $F \lambda G = \bigcup_{\mu \in F} \mu\lambda G = \bigcup_{\nu \in G} F\lambda\nu$.

A morphism between $k$-graphs $(\Lambda_1,d_1)$ and $(\Lambda_2,d_2)$ is a functor $f: \Lambda_1
\to \Lambda_2$ which respects the degree maps. The factorisation property implies that $v \mapsto
\id_v$ is a bijection between $\Obj(\Lambda)$ and $\Lambda^0$, allowing us to identify
$\Obj(\Lambda)$ with $\Lambda^0$. In particular, we will henceforth regard $r$ and $s$ as maps from
$\Lambda$ to $\Lambda^0$.

\section{Coloured graphs and coloured-graph morphisms}
Consider the free semigroup $\FF_k$ on $k$-generators $\{c_1, \dots c_k\}$. A \emph{$k$-coloured
graph} is a graph $E$ together with a map $c : E^1 \to \{c_1, \dots, c_k\}$, which we extend to a
functor $c : E^* \to \FF_k^+$. We write $q$ for the canonical quotient map $q : \FF_k^+ \to \NN^k$
determined by $q(c_i) = e_i$ for all $i$. So each path $x \in E^*$ has both a colouring $c(x) \in
\FF_k^+$ and a shape $q(c(x)) \in \NN^k$. If there are multiple $k$-coloured graphs around, we
write $c_E$ for the colour map associated to the graph $E$. In this paper, we will draw edges of
colour $c_1$ in blue and solid, edges of colour $c_2$ in red and dashed, and edges of colour $c_3$
in green and dotted.

A \emph{graph morphism} $\psi$ from a graph $E$ to a graph $F$ consists of functions $\psi^0:E^0\to
F^0$ and $\psi^1:E^1 \to F^1$ such that $r_F(\psi^1(e)) = \psi^0(r_E(e))$ and $s_F(\psi^1(e)) =
\psi^0(s_E(e))$ for all $e \in E^1$. Given graph morphisms $\psi : E \to F$ and $\phi : F \to G$,
we write $\phi \circ \psi$ for the graph morphism from $E$ to $G$ given by $(\phi \circ \psi)^i =
\phi^i \circ \psi^i$ for $i = 0,1$. A \emph{coloured-graph morphism} is a graph morphism $\psi$
such that $c_E(e) = c_F(\psi(e))$ for every $e \in E^1$.

The following example describes the model $k$-coloured graphs which will underly the construction
used in our main theorem in Section~\ref{sec:k-graphs}. In the example, $n + v_i$ is a formal
symbol intended to suggest an edge of colour $c_i$ pointing from the integer-grid point $n + e_i$
to the integer-grid point $n$.

\begin{example}\label{eg:Esubkm}
For $m \in (\NN \cup \{ \infty \})^k$, we define a coloured graph $E_{k,m}$ by
\begin{gather*}
E_{k,m}^0 = \{ n \in \NN^k: 0 \leq n \leq m \},\qquad
    E_{k,m}^1 = \{ n+v_i: n, n+e_i \in E_{k,m}^0\},\\
r(n+v_i) = n, \qquad s(n+v_i) = n+e_i \qquad\text{and}\qquad c(n+v_i) = c_i.
\end{gather*}
For $n+v_i \in E^1$ and $m \in \NN^k$, we define $(n+v_i)+m := (n+m)+v_i$. For $x\in E^1$ with
$c(x) = c_j$, it is unambiguous and often useful to write $v_{c(x)}:=v_j$. Given a coloured-graph
morphism $\lambda : E_{k,m} \to E$ we say $\lambda$ has degree $m$ and write $d(\lambda) = m$, and
define $r(\lambda):= \lambda(0)$ and $s(\lambda) := \lambda(m)$.
\end{example}

Given a $k$-coloured graph $E$ and distinct $i,j \in \{1,\dots,k\}$, an \emph{$\{i,j\}$-square} (or
just a \emph{square}) in $E$ is a coloured-graph morphism $\phi:E_{k,e_i+e_j} \to E$. If
$\lambda:E_{k,m} \to E$ is a coloured-graph morphism and $\phi$ is a square in $E$, then $\phi$
\emph{occurs in} $\lambda$ if there exists $n \in \NN^k$ such that $\phi(x) = \lambda(x + n)$ for
all $x \in E_{k,e_i+e_j}$.

Let $E$ be a $k$-coloured graph. A \emph{complete collection of squares} is a collection $\Cc$ of
squares in $E$ such that for each $x \in E^*$ with $c(x) = c_ic_j$ and $i \not= j$, there exists a
unique $\phi \in \Cc$ such that $x = \phi(v_i)\phi(e_i+v_j)$. We write $\phi(v_i)\phi(e_i+v_j) \sim_\Cc
\phi(v_j)\phi(e_j+v_i)$, so for each $c_ic_j$-coloured path $x \in E^*$, there is a unique
$c_jc_i$-coloured path $y$ such that $x \sim_\Cc y$. If $\Cc$ is clear from context, we just write
$x \sim y$. A coloured-graph morphism $\lambda : E_{k,m} \to E$ is \emph{$\Cc$-compatible} if every
square occurring in $\lambda$ belongs to $\Cc$.

For $p,q \in \NN^k$ with $p \leq q$, define $E_{k,[p,q]}$ to be the subgraph of $E_{k,q}$ such that
\begin{align*}
E_{k,[p,q]}^0 &= \{ n \in \NN^k : p \leq n \leq q\},\\
E_{k,[p,q]}^1 &= \{ x\in E_{k,q}^1 : s(x),r(x) \in E_{k,[p,q]}^0 \}.
\end{align*}

Given a coloured-graph morphism $\lambda:E_{k,m} \to E$ and $p,q \in \NN^k$ such that $p \leq q
\leq m$, define $\lambda|_{E_{k,[p,q]}}^* : E_{k, q-p} \to E$ by
\begin{equation}\label{starrestriction}
    \lambda|_{E_{k,[p,q]}}^*(a) =  \lambda(p+a).
\end{equation}
The star is to remind us that this non-standard restriction involves a translation.

We say a complete collection of squares $\Cc$ in a $k$-coloured graph $E$ is \emph{associative} if
for every path $fgh$ in $E$ such that $f,g,h$ are edges of distinct colour, the edges
$f_1,f_2,g_1,g_2,h_1,h_2$ and $f^1, f^2, g^1, g^2, h^1, h^2$ determined by
\begin{equation}\label{eq:fgh rearrangement}
\parbox[c]{0.9\textwidth}{\begin{tikzpicture}[scale=1.75]
    \node at (-2.4,0.5) {\parbox{8truecm}{
        $fg \sim g^1 f^1$, $f^1h \sim h^1 f^2$, and $g^1h^1 \sim h^2 g^2$\\ \\
        $gh \sim h_1 g_1$, $f h_1 \sim h_2 f_1$, and $f_1 g_1 \sim g_2 f_2$ \\}};
    \node[inner sep=1.5pt, circle, fill=black] (000) at (0,0,0) {};
    \node[inner sep=1.5pt, circle, fill=black] (001) at (0,0,-1) {};
    \node[inner sep=1.5pt, circle, fill=black] (010) at (0,1,0) {};
    \node[inner sep=1.5pt, circle, fill=black] (011) at (0,1,-1) {};
    \node[inner sep=1.5pt, circle, fill=black] (100) at (1,0,0) {};
    \node[inner sep=1.5pt, circle, fill=black] (110) at (1,1,0) {};
    \node[inner sep=1.5pt, circle, fill=black] (111) at (1,1,-1) {};
    \draw[thick,red,-latex,dashed] (010)--(000) node[pos=0.55,anchor=east,inner sep=1pt] {\color{black}\tiny$g^1$};
    \draw[thick,red,-latex,dashed] (110)--(100) node[pos=0.55,anchor=east,inner sep=1pt] {\color{black}\tiny$g$};
    \draw[thick,red,-latex,dashed] (011)--(001) node[pos=0.55,anchor=east,inner sep=1pt] {\color{black}\tiny$g^2$};
    \draw[thick,green!50!black,-latex,dotted] (001)--(000) node[pos=0.4,anchor=south east,inner sep=0pt] {\color{black}\tiny$h^2$};
    \draw[thick,green!50!black,-latex,dotted] (011)--(010) node[pos=0.5,anchor=south east,inner sep=0pt] {\color{black}\tiny$h^1$};
    \draw[thick,green!50!black,-latex,dotted] (111)--(110) node[pos=0.5,anchor=south east,inner sep=0pt] {\color{black}\tiny$h$};
    \draw[thick,blue,-latex,solid] (100)--(000) node[pos=0.4,anchor=south,inner sep=1pt] {\color{black}\tiny$f$};
    \draw[thick,blue,-latex,solid] (110)--(010) node[pos=0.4,anchor=south,inner sep=1pt] {\color{black}\tiny$f^1$};
    \draw[thick,blue,-latex,solid] (111)--(011) node[pos=0.4,anchor=south,inner sep=1pt] {\color{black}\tiny$f^2$};
\begin{scope}[xshift=1.6cm]
    \node[inner sep=1.5pt, circle, fill=black] (000) at (0,0,0) {};
    \node[inner sep=1.5pt, circle, fill=black] (001) at (0,0,-1) {};
    \node[inner sep=1.5pt, circle, fill=black] (011) at (0,1,-1) {};
    \node[inner sep=1.5pt, circle, fill=black] (100) at (1,0,0) {};
    \node[inner sep=1.5pt, circle, fill=black] (101) at (1,0,-1) {};
    \node[inner sep=1.5pt, circle, fill=black] (110) at (1,1,0) {};
    \node[inner sep=1.5pt, circle, fill=black] (111) at (1,1,-1) {};
    \draw[thick,blue,-latex,solid] (100)--(000) node[pos=0.6,anchor=north,inner sep=1pt] {\color{black}\tiny$f$};
    \draw[thick,blue,-latex,solid] (101)--(001) node[pos=0.6,anchor=north,inner sep=1pt] {\color{black}\tiny$f_1$};
    \draw[thick,blue,-latex,solid] (111)--(011) node[pos=0.6,anchor=north,inner sep=1pt] {\color{black}\tiny$f_2$};
    \draw[thick,red,-latex,dashed] (111)--(101) node[pos=0.4,anchor=west,inner sep=1pt] {\color{black}\tiny$g_1$};
    \draw[thick,red,-latex,dashed] (110)--(100) node[pos=0.4,anchor=west,inner sep=1pt] {\color{black}\tiny$g$};
    \draw[thick,red,-latex,dashed] (011)--(001) node[pos=0.4,anchor=west,inner sep=1pt] {\color{black}\tiny$g_2$};
    \draw[thick,green!50!black,-latex,dotted] (001)--(000) node[pos=0.4,anchor=north west,inner sep=0pt] {\color{black}\tiny$h_2$};
    \draw[thick,green!50!black,-latex,dotted] (101)--(100) node[pos=0.4,anchor=north west,inner sep=0pt] {\color{black}\tiny$h_1$};
    \draw[thick,green!50!black,-latex,dotted] (111)--(110) node[pos=0.5,anchor=north west,inner sep=0pt] {\color{black}\tiny$h$};
\end{scope}
\end{tikzpicture}}
\end{equation}
satisfy $f^2 = f_2, g^2 = g_2$ and $h^2 = h_2$.

Let $E$ be a $k$-coloured graph, and $m \in \NN^k\setminus\{0\}$. Fix $x\in E^*$ and a
coloured-graph morphism $\lambda : E_{k,m} \to E$. We say $x$ \emph{traverses} $\lambda$ if the
shape $q(c(x))$ of $x$ is equal to $d(\lambda)$ and $\lambda(q(c(x_1\dots x_{l-1})) + v_{c(x_l)}) =
x_l$ for all $0 < l \leq |m|$. If $m=0$, then $x \in E^0$ and $\dom(\lambda) = \{0\}$, and we say
$x$ traverses $\lambda$ if $x = \lambda(0)$. Observe that for any coloured-graph morphism
$\lambda$, and any decomposition $d(\lambda) = e_{i_1} + e_{i_2} + \dots + e_{i_{|\lambda|}}$ there
is a corresponding path $x := \lambda(0 + v_{i_1}) \lambda(e_{i_1} + v_{i_2}) \dots \lambda
\big((d(\lambda) - e_{i_{|\lambda|}}) + v_{i_{|\lambda|}}\big)$ which traverses $\lambda$; in
particular, for every finite coloured-graph morphism $\lambda$ there is a path which traverses
$\lambda$.

We can also make sense of infinite coloured paths which traverse infinite coloured-graph morphisms.
If $x \in E^\infty$ and $\lambda : E_{k,p} \to E$ is a coloured-graph morphism of non-finite degree
(so $p \in (\NN \cup \{\infty\})^k \setminus \NN^k$), then we say that $x$ traverses $\lambda$ if
$x_1 \dots x_n$ traverses $\lambda|_{E_{k,d(x_1 \dots x_n)}}$ for every $n \in \NN$.

\begin{rmk}\label{rmk:composite traversals}
Let $E$ be a $k$-coloured graph and let $\lambda : E_{k,m} \to E$ be a coloured-graph morphism
where $m \in \NN^k$. Fix $p \le m$. If $x \in E^*$ traverses $\lambda|_{[0,p]}$ and $y \in E^*$
traverses of $\lambda|^*_{[p,m]}$, then $d(\lambda) = m = p + (m-p) = q(c(x)) + q(c(y))$, and for
$l \le |xy| = |x|+|y|$, we have
\begin{align*}
    \lambda(d((xy)_1\dots (xy)_{l-1}) + v_{c((xy)_l)})
        &=\begin{cases}
            \lambda|_{[0,p]}(q(c(x_1\dots x_{l-1})) + v_{c(x)_l}) & \text{ if $l \le |x|$}\\
            \lambda|^*_{[p,m]}(q(c(y_1\dots y_{l-p-1})) + v_{c(y_{l-p})}) & \text{ otherwise}
        \end{cases} \\
        &=\begin{cases}
             x_l  & \text{ if $l \le |x|$}\\
             y_{l-p} &\text{ otherwise,}
        \end{cases}
\end{align*}
so $xy$ traverses $\lambda$.
\end{rmk}

\section{From \texorpdfstring{$k$}{k}-coloured graphs to \texorpdfstring{$k$}{k}-graphs.}\label{sec:k-graphs}

In this section we present an explicit description of the unique $k$-graph associated to a $k$-coloured graph $E$ and complete collection $\Cc$ of squares in $E$ which is associative (see Theorem~\ref{colour to
rank}).

We begin by showing how a $k$-graph defines a skeleton and a collection of squares. If $\Lambda$ is
a $k$-graph, $\lambda \in \Lambda$, and $m \le n \le d(\lambda)$, then we write $\lambda(m,n)$ for
the unique element of $\Lambda^{m-n}$ such that $\lambda = \lambda'\lambda(m,n)\lambda''$ with
$d(\lambda') = m$ and $d(\lambda'') = d(\lambda) - n$. We write $\lambda(n)$ for $s(\lambda(0,n))
\in \Lambda^0$.

\begin{defn}\label{dfn:E_Lambda def}
Let $\Lambda$ be a $k$-graph. We define a coloured graph $E_\Lambda$ and a collection $\Cc_\Lambda$
of squares associated to $\Lambda$  as follows. Let $E_\Lambda$ be the $k$-coloured graph with
$E_\Lambda^0 = \{\sk{v} : v \in \Lambda^0\}$, $E_\Lambda^1 = \bigcup_{i = 1}^k \{\sk{f} : f \in
\Lambda^{e_i}\}$, and $c(\sk{f}) = c_i \iff d(f)=e_i$. Define $\pi : E^0_\Lambda \to \Lambda$ by
$\pi(\sk{v}) = v$ and $\pi : E_\Lambda^1 \to \Lambda$ by $\pi(\sk{f}) = f$, and extend this to a
map $\pi : E^*_\Lambda \to \Lambda$ by $\pi(\sk{f_1} \dots \sk{f_n}) = f_1 \cdots f_n$. For
distinct $i,j \le k$ and $\lambda \in \Lambda^{e_i + e_j}$ define a coloured-graph morphism
$\phi_\lambda : E_{k, e_i + e_j} \to E_\Lambda$ by
\begin{equation}\label{eq:phi_lambda}
\phi_\lambda^0(n) = \sk{\lambda(n)}\quad\text{ and }\quad \phi_\lambda^{1}(n + v_i) := \sk{\lambda(n, n+e_i)}.
\end{equation}
Let $\Cc_\Lambda := \bigcup_{i < j \le k} \{\phi_\lambda : \lambda \in \Lambda^{e_i + e_j}\}$. We
call $E_\Lambda$ the \emph{skeleton} of $\Lambda$.
\end{defn}

\begin{lemma}\label{lem:k-gr -> skeleton}
Let $\Lambda$ be a $k$-graph. Fix distinct $i,j \le k$ and $\lambda \in \Lambda^{e_i + e_j}$. Then
$\phi_\lambda$ is the unique coloured-graph morphism from $E_{k, e_i + e_j} \to E_\Lambda$ such
that
\begin{equation}\label{eq:pi-phi interaction}
\pi(\phi_\lambda(0+v_i)\phi_\lambda(e_i + v_j)) = \lambda =
\pi(\phi_\lambda(0+v_j)\phi_\lambda(e_j + v_i)).
\end{equation}
Moreover $\Cc_\Lambda$ is a complete collection of squares in $E_\Lambda$ which is associative.
\end{lemma}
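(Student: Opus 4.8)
The plan is to verify the statement in three stages: first that $\phi_\lambda$ is a well-defined coloured-graph morphism, then that it is characterised by~\eqref{eq:pi-phi interaction}, and finally that $\Cc_\Lambda$ is complete and associative.

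First I would check that $\phi_\lambda$ as defined by~\eqref{eq:phi_lambda} is indeed a coloured-graph morphism $E_{k,e_i+e_j}\to E_\Lambda$. The vertex set of $E_{k,e_i+e_j}$ is $\{0, e_i, e_j, e_i+e_j\}$, and the four edges are $0+v_i$, $0+v_j$, $e_i+v_j$, $e_j+v_i$; one checks directly from the definitions of $r$, $s$ in Example~\ref{eg:Esubkm} and in $E_\Lambda$ that $r_{E_\Lambda}(\phi_\lambda(n+v_i))=\sk{\lambda(n)}=\phi_\lambda^0(n)$ and $s_{E_\Lambda}(\phi_\lambda(n+v_i))=\sk{\lambda(n+e_i)}=\phi_\lambda^0(n+e_i)$, using that $\lambda(n,n+e_i)$ has range $\lambda(n)$ and source $\lambda(n+e_i)$; the colour condition is immediate since $d(\lambda(n,n+e_i))=e_i$ forces $c(\sk{\lambda(n,n+e_i)})=c_i$. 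For~\eqref{eq:pi-phi interaction}, I would compute $\pi(\phi_\lambda(0+v_i)\phi_\lambda(e_i+v_j))=\pi(\sk{\lambda(0,e_i)}\,\sk{\lambda(e_i,e_i+e_j)})=\lambda(0,e_i)\lambda(e_i,e_i+e_j)=\lambda$ by the factorisation property (both factorisations of $\lambda$ recover $\lambda$), and symmetrically for the other composite.

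For uniqueness, suppose $\psi:E_{k,e_i+e_j}\to E_\Lambda$ is a coloured-graph morphism with $\pi(\psi(0+v_i)\psi(e_i+v_j))=\lambda=\pi(\psi(0+v_j)\psi(e_j+v_i))$. Since $\psi$ respects colours and $\pi$ maps edges of colour $c_i$ to elements of $\Lambda^{e_i}$, the equation $\pi(\psi(0+v_i))\pi(\psi(e_i+v_j))=\lambda$ is a factorisation of $\lambda$ with degrees $e_i$ and $e_j$; by the uniqueness in the factorisation property, $\pi(\psi(0+v_i))=\lambda(0,e_i)$ and $\pi(\psi(e_i+v_j))=\lambda(e_i,e_i+e_j)$, and since $\pi$ is injective on edges this forces $\psi(0+v_i)=\phi_\lambda(0+v_i)$ and $\psi(e_i+v_j)=\phi_\lambda(e_i+v_j)$; the other equation pins down $\psi$ on the remaining two edges, and then the values on vertices are determined by $r,s$. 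So $\psi=\phi_\lambda$.

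Next, completeness: given $x=\sk{f}\sk{g}\in E_\Lambda^*$ with $c(x)=c_ic_j$, $i\neq j$, set $\lambda:=\pi(x)=fg\in\Lambda^{e_i+e_j}$; then $\phi_\lambda\in\Cc_\Lambda$ and $\phi_\lambda(0+v_i)\phi_\lambda(e_i+v_j)=\sk{\lambda(0,e_i)}\,\sk{\lambda(e_i,e_i+e_j)}=\sk{f}\sk{g}=x$ by the factorisation property and injectivity of $\pi$ on edges; uniqueness of such $\phi\in\Cc_\Lambda$ follows from the uniqueness part just proved together with the observation that any $\phi\in\Cc_\Lambda$ with $\phi(v_i)\phi(e_i+v_j)=x$ satisfies $\pi(\phi(v_i)\phi(e_i+v_j))=\lambda$. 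Note also that the relation $\sim_{\Cc_\Lambda}$ then reads: $\sk{f}\sk{g}\sim\sk{f'}\sk{g'}$ iff $fg=f'g'$ in $\Lambda$. The main obstacle is associativity: I would take a path $fgh$ in $E_\Lambda$ of three distinct colours, set $\mu:=\pi(\sk{f}\sk{g}\sk{h})\in\Lambda$, and observe that each of the six squares appearing in~\eqref{eq:fgh rearrangement} corresponds, via the characterisation of $\sim_{\Cc_\Lambda}$ above, to a rewriting of a length-two subword of a word whose $\pi$-image is a fixed length-two subpath of $\mu$. Tracking this carefully, both $f^2\sk{}$-type edges $\sk{f^2},\sk{f_2}$ satisfy $\pi(\sk{f^2})=\pi(\sk{f_2})=\mu(e_i+e_j, e_i+e_j+e_\ell)$ (the unique degree-$e_i$ piece of $\mu$ starting at level $e_i+e_j$), where $\ell$ is the colour of $h$; since $\pi$ is injective on $E_\Lambda^1$, $f^2=f_2$, and similarly $g^2=g_2$, $h^2=h_2$. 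The care needed is just in bookkeeping which sub-factorisation of $\mu$ each intermediate edge names; the factorisation property does all the real work.
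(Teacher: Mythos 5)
Your proposal is correct and follows essentially the same route as the paper: compute $\pi$ of the two composites directly, use uniqueness of factorisations plus injectivity of $\pi$ on $E^1_\Lambda$ for the uniqueness and completeness claims, and for associativity observe that $\pi(h_2g_2f_2)=\pi(fgh)=\pi(h^2g^2f^2)$ so the factorisation property forces the corresponding edges to agree. The only blemish is a notational slip in the last step (the interval should be the one determined by the colours of $g$ and $h$, not ``$e_i+e_j$'', and the piece has degree equal to the colour of $f$); this is pure bookkeeping and does not affect the argument.
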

\begin{proof}
Fix distinct $i, j \le k$, and $\lambda \in \Lambda^{e_i + e_j}$. Then
\[
    \pi(\phi_\lambda(0+v_i)\phi_\lambda(e_i + v_j))
        = \pi\big(\sk{\lambda(0, e_i)}\spc\sk{\lambda(e_i, e_i+e_j)}\big)
        = \lambda(0, e_i)\lambda(e_i, e_i + e_j)
        = \lambda.
\]
The symmetric calculation shows that $\pi(\phi_\lambda(0+v_j)\phi_\lambda(e_j + v_i)) = \lambda$
also. Hence $\phi_\lambda$ satisfies~\eqref{eq:pi-phi interaction}. To see that it is the unique
such coloured-graph morphism, suppose that $f \in c^{-1}(i)$ and $g \in c^{-1}(j)$ and $\pi(fg) =
\lambda$. Then the factorisation property forces $\pi(f) = \lambda(0, e_i)$ and $\pi(g) =
\lambda(e_i, e_i + e_j)$. Since $\pi$ is injective on $E^1_\Lambda$, it follows that $f =
\sk{\lambda(0, e_i)} = \phi_\lambda(0 + v_i)$ and $g = \sk{\lambda(e_i, e_i + e_j)} =
\phi_\lambda(e_i+v_j)$. A symmetric argument applies with $i$ and $j$ interchanged, and this proves
the first statement of the lemma.

To see that the collection $\Cc_\Lambda$ is complete, fix $f,g \in E_\Lambda^1$ with $s(f) = r(g)$
and $c(f)\not= c(g)$, say $c(f) = c_i$ and $c(g) = c_j$. Then $\pi(f) \in \Lambda^{e_i}$ and
$\pi(g) \in \Lambda^{e_j}$, so $\pi(fg) \in \Lambda^{e_i + e_j}$, and the factorisation property
ensures that $fg$ traverses $\phi_{\pi(fg)}$. Moreover, if $\lambda \in \Lambda^{e_i+e_j}$ is
another path such that $fg$ traverses $\phi_\lambda$, then
\[
\lambda = \lambda(0, e_i)\lambda(e_i, e_i+e_j) = \pi(\phi_\lambda(0+v_i)\phi_\lambda(e_i + v_j)) = \pi(fg),
\]
so $\phi_{\pi(fg)}$ is the unique element of $\Cc_\Lambda$ such that $fg$ traverses
$\phi_{\pi(fg)}$. For the associativity condition, suppose we have $f,g,h, f^i, g^i, h^i$, and
$f_i, g_i, h_i$ as in~\eqref{eq:fgh rearrangement}. By associativity of composition in $\Lambda$,
we have
\[
    \pi(h_2g_2f_2) = \pi(fgh) = \pi(h^2g^2f^2),
\]
so the factorisation property in $\Lambda$ forces $\pi(h_2) = \pi(h^2)$, $\pi(g_2) = \pi(g^2)$ and
$\pi(f_2) = \pi(f^2)$. Since $\pi$ is injective on $E^1_\Lambda$, it follows that $h_2 = h^2$, $g_2
= g^2$ and $f_2 = f^2$ as required.
\end{proof}

\begin{notation}\label{ntn:Lambda_E setup}
Let $E$ be a $k$-coloured graph, and let $\Cc$ be a complete collection of squares in $E$ which is
associative. For each $m \in \NN^k$, we write $\Lambda_{(E,\Cc)}^m$ for the set of all
$\Cc$-compatible coloured-graph morphisms $E_{k,m}\rightarrow E$. Let $\Lambda_{(E,\Cc)} :=
\bigcup_{m \in \NN^k} \Lambda_{(E,\Cc)}^m$. Let $d : \Lambda_{(E,\Cc)} \to \NN^k$ and $r,s :
\Lambda_{(E,\Cc)} \to \Lambda^0_E$ be as defined in Example~\ref{eg:Esubkm}. For $v \in E^0$ we
define $\lambda_v : E_{k,0} \to E$ by $\lambda_v(0) = v$, and for $1 \le i \le k$ and $f \in E^1$
with $c(f) = c_i$ we define $\lambda_f : E_{k, e_i} \to E$ by $\lambda_f(0) = r(f)$,
$\lambda_f(e_i) = s(f)$ and $\lambda_f(0+v_i) = f$.
\end{notation}

Our first main theorem shows that the notation above describes a $k$-graph whose skeleton is
isomorphic to $E$ under an isomorphism which carries the commuting squares of $\Lambda$ to the
elements of $\Cc$.

\begin{theorem}\label{colour to rank}
Fix a $k$-coloured graph $E$ and a complete collection of squares $\Cc$ in $E$ which is
associative. If $\mu:E_{k,m}\rightarrow E$ and $\nu:E_{k,n}\rightarrow E$ are $\Cc$-compatible
coloured-graph morphisms such that $s(\mu)=r(\nu)$, then there exists a unique $\Cc$-compatible
coloured-graph morphism $\mu\nu:E_{k,m+n}\rightarrow E$ such that $(\mu\nu)|_{E_{k,m}}=\mu$ and
$(\mu\nu)|_{E_{k,[m,m+n]}}^*=\nu$. Under this composition map, the set $\Lambda =
\Lambda_{(E,\Cc)}$ of Notation~\ref{ntn:Lambda_E setup}, endowed with the structure maps defined
there, is a $k$-graph. There is an isomorphism $\rho : E \to E_\Lambda$ such that $\rho^0(v) =
\sk{\lambda_v}$ for all $v \in E^0$ and $\rho^1(f) = \sk{\lambda_f}$ for all $f \in E^1$; and this
$\rho$ satisfies $\rho \circ \phi \in \Cc_\Lambda$ for all $\phi \in \Cc$.
\end{theorem}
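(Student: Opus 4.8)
The plan is to prove the theorem in four stages: first construct the composition $\mu\nu$, then verify the $k$-graph axioms (associativity and the factorisation property), and finally build $\rho$ and check that it intertwines $\Cc$ with $\Cc_\Lambda$. For the construction of $\mu\nu$, I would proceed by induction on $|n| = |d(\nu)|$. When $|n| = 0$ we set $\mu\nu = \mu$, and when $n = e_i$ we glue a single edge onto the "$i$-boundary" of $\mu$: concretely $(\mu\nu)(p) = \mu(p)$ and $(\mu\nu)(p+v_l)=\mu(p+v_l)$ for $p + e_l \le m$, while $(\mu\nu)(m + v_i) = \nu(v_i)$ and for $p \le m$ with $p_i = m_i$ we define $(\mu\nu)(p + e_i)$ and the remaining edges $(\mu\nu)(p+v_l)$, $l \ne i$, by repeatedly applying the squares in $\Cc$ to push the new $c_i$-coloured edge "up" through the $c_l$-directions. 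The completeness of $\Cc$ makes each such push well-defined; the point requiring care is to check that the result is a genuine coloured-graph morphism (ranges and sources match) and that every square occurring in it lies in $\Cc$ — this last part is where \emph{associativity} of $\Cc$ is used, to guarantee that the $3$-cubes $E_{k,e_i+e_j+e_l}$ filled in by this process are consistent. Uniqueness at each step follows because $\Cc$-compatibility and the prescribed restrictions force the value on each new edge.

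For the general step, given $\nu : E_{k,n} \to E$ with $n = n' + e_i$, factor $\nu$ as $\nu|_{E_{k,n'}}$ followed by the single-colour morphism on $E_{k,[n',n]}$, apply the inductive hypothesis to append $\nu|_{E_{k,n'}}$ to $\mu$, then append the last edge-strip by the base case; one then checks this is independent of the order in which the generators of $n$ are consumed, again invoking associativity. Having built the composition, associativity of composition in $\Lambda$ (i.e. $(\lambda\mu)\nu = \lambda(\mu\nu)$) follows from uniqueness: both sides are the unique $\Cc$-compatible morphism restricting correctly on $[0,d(\lambda)]$, $[d(\lambda),d(\lambda)+d(\mu)]$ and $[d(\lambda)+d(\mu),d(\lambda)+d(\mu)+d(\nu)]$. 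The factorisation property is almost immediate: given $\lambda\in\Lambda^{m+n}$, set $\mu = \lambda|_{E_{k,m}}$ and $\nu = \lambda|^*_{E_{k,[m,m+n]}}$; these are $\Cc$-compatible since any square occurring in a restriction of $\lambda$ occurs in $\lambda$, and $\mu\nu = \lambda$ by the uniqueness clause, while uniqueness of the factorisation is again the uniqueness clause of the composition statement. The units are the $\lambda_v$, with $r,s$ as specified, so $\Lambda_{(E,\Cc)}$ is a $k$-graph.

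For the final statement, define $\rho^0(v) = \sk{\lambda_v}$ and $\rho^1(f) = \sk{\lambda_f}$. That $\rho$ is a coloured-graph morphism is a direct check: $r(\sk{\lambda_f}) = \sk{r(\lambda_f)} = \sk{\lambda_{r(f)}} = \rho^0(r(f))$ and similarly for $s$, while $c(\sk{\lambda_f}) = c_i = c(f)$. Bijectivity on vertices is clear since $v \mapsto \lambda_v$ is a bijection $E^0 \to \Lambda^0$; bijectivity on edges follows because $f \mapsto \lambda_f$ is a bijection from $E^1$ onto $\Lambda^{e_1}\cup\cdots\cup\Lambda^{e_k} = E_\Lambda^1$ (the inverse sends a degree-$e_i$ morphism $g$ to $g(0+v_i) \in E^1$). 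Finally, to see $\rho\circ\phi\in\Cc_\Lambda$ for $\phi\in\Cc$ an $\{i,j\}$-square, recall $\Cc_\Lambda = \{\phi_\mu : \mu \in \Lambda^{e_i+e_j}, i<j\}$; I would take $\mu$ to be the $\Cc$-compatible coloured-graph morphism $E_{k,e_i+e_j}\to E$ equal to $\phi$ itself (viewed as an element of $\Lambda^{e_i+e_j}$), and then check $\rho\circ\phi = \phi_\mu$ by evaluating both sides on vertices and the four edges of $E_{k,e_i+e_j}$: on the edge $n+v_i$ the left side is $\sk{\lambda_{\phi(n+v_i)}}$ while the right side is $\sk{\mu(n,n+e_i)}$, and the two agree because $\mu(n,n+e_i) = \lambda_{\phi(n+v_i)}$ by the factorisation property in $\Lambda$ applied to $\mu=\phi$.

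The main obstacle is the well-definedness and $\Cc$-compatibility of the composition $\mu\nu$ in the inductive construction — specifically, showing that the sequence of square-applications used to propagate a new edge through the grid produces a consistent coloured-graph morphism all of whose $2$-dimensional faces lie in $\Cc$. This is exactly where associativity of $\Cc$ must be used, and organizing the bookkeeping (which $3$-cube, traversed in which order) so that associativity applies cleanly is the delicate part; everything after the composition is built is formal manipulation with the uniqueness clause.
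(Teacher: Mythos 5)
Your plan is sound and rests on the same two pillars as the paper's proof: completeness of $\Cc$ to fill in each new square, and associativity of $\Cc$ (consistency of $3$-cubes) to make the filling well defined and $\Cc$-compatible; everything after the composition is built (associativity of composition, the factorisation property, the construction and square-preservation of $\rho$) is then formal manipulation with the uniqueness clause, exactly as in the paper. The genuine difference is how the induction is organised, and it affects how much of the ``delicate part'' you flag can actually be discharged. The paper does not induct on $|d(\nu)|$; it proves one statement (Proposition~\ref{snake_giving_C_compatible_morphism}) --- every word $x\in E^*$ traverses a unique $\Cc$-compatible coloured-graph morphism $\lambda_x$ --- by induction on the length of the word $x$, with the three-colour cube (Lemma~\ref{trav}) isolated as the only place associativity enters; composition is then obtained by concatenating traversing words (Corollary~\ref{cor_product}), and associativity and factorisation follow in a line each (Corollary~\ref{cor_associative_product}, Remark~\ref{factorisation}). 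In your scheme the ``base case'' $d(\nu)=e_i$ is not really a base case: extending an arbitrary $\Cc$-compatible $\mu:E_{k,m}\to E$ by a single edge at $m+v_i$ is precisely the hard step, and to close your propagation argument you need a further induction (on $|m|$, say) to show both that different propagation routes to the same face agree and that the squares of the new top slice $\{n: n_i=m_i+1\}$ --- those whose two colours are both different from $c_i$, which your propagation does not create directly --- also lie in $\Cc$. The paper's decomposition~\eqref{eq:decomp} of $E_{k,m+e_i}$ into $E_{k,m}$, the slabs $E_{k,m+e_i-e_j}$ and the faces $E_{k,[m-e_j,m+e_i]}$ is exactly the bookkeeping that makes both points fall out of the inductive hypothesis, since every relevant sub-face is traversed by a strictly shorter word. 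So: same strategy, but you should restructure the induction around traversing paths in $E^*$ rather than pushing the new edge through the grid by hand; your treatment of $\rho$ and of $\rho\circ\phi\in\Cc_\Lambda$ is correct and matches the paper's.
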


Our second main theorem says that the $k$-graph $\Lambda_{(E,\Cc)}$ is uniquely determined, up to
isomorphism, by the isomorphism class of $(E, \Cc)$.

\begin{theorem}\label{thm:uniqueness of Lambda_E} Fix a $k$-graph $\Gamma$, a $k$-coloured graph $E$
and a complete collection $\Cc$ of squares in $E$ which is associative. Suppose that $\psi :
E_\Gamma \to E$ is a coloured-graph isomorphism such that $\psi \circ \phi \in \Cc$ for all $\phi
\in \Cc_\Gamma$. Then for each $\gamma \in \Gamma$ there is a $\Cc$-compatible coloured-graph
morphism $\theta_\gamma : E_{k, d(\gamma)} \to E$ such that
\begin{align}
    \theta_\gamma^0(m) &= \psi^0(\sk{\gamma(m)}) \quad\text{ for $m \in E_{k, d(\gamma)}^0$, and} \label{eq:theta(gamma)0}\\
    \theta_\gamma^1(m+v_i) &= \psi^1(\sk{\gamma(m, m+e_i)}) \quad\text{ for $m + v_i \in E_{k, d(\gamma)}^1$.}\label{eq:theta(gamma)1}
\end{align}
Moreover, the map $\theta : \gamma \mapsto \theta_\gamma$ is an isomorphism $\Gamma \cong
\Lambda_{(E,\Cc)}$.
\end{theorem}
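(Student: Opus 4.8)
The plan is to construct $\theta_\gamma$ explicitly and then to verify, in turn, that it is a well-defined $\Cc$-compatible coloured-graph morphism, that $\theta$ is a functor, and that $\theta$ is a bijection. For $\gamma\in\Gamma$, introduce the coloured-graph morphism $\Phi_\gamma:E_{k,d(\gamma)}\to E_\Gamma$ given by $\Phi_\gamma^0(m)=\sk{\gamma(m)}$ and $\Phi_\gamma^1(m+v_i)=\sk{\gamma(m,m+e_i)}$ — this is the natural extension to paths of arbitrary degree of the maps $\phi_\lambda$ from Definition~\ref{dfn:E_Lambda def} — and set $\theta_\gamma:=\psi\circ\Phi_\gamma$. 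That $\Phi_\gamma$ (hence $\theta_\gamma$) respects ranges, sources and colours is a direct consequence of the factorisation property in $\Gamma$, which gives $r(\gamma(m,m+e_i))=\gamma(m)$, $s(\gamma(m,m+e_i))=\gamma(m+e_i)$ and $d(\gamma(m,m+e_i))=e_i$; and then \eqref{eq:theta(gamma)0}--\eqref{eq:theta(gamma)1} hold by definition. For $\Cc$-compatibility, I would first observe that any square occurring in $\Phi_\gamma$, say with $\phi(x)=\Phi_\gamma(x+n)$ for all $x$, coincides with the square $\phi_{\gamma(n,\,n+e_i+e_j)}\in\Cc_\Gamma$ of Definition~\ref{dfn:E_Lambda def}; this is immediate once both sides are expanded using the factorisation property. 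Consequently a square occurring in $\theta_\gamma=\psi\circ\Phi_\gamma$ has the form $\psi\circ\phi$ with $\phi\in\Cc_\Gamma$, and so lies in $\Cc$ by hypothesis; hence $\theta_\gamma\in\Lambda_{(E,\Cc)}^{d(\gamma)}$.

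Next, functoriality. Degree-preservation is built into the construction, $\theta_v=\lambda_{\psi^0(\sk v)}$ is the identity morphism of the object $\psi^0(\sk v)\in E^0=\Lambda^0_{(E,\Cc)}$, and these exhaust $\Lambda^0_{(E,\Cc)}$ because $\psi^0$ is a bijection. For $\theta_{\gamma_1\gamma_2}=\theta_{\gamma_1}\theta_{\gamma_2}$, I would invoke the uniqueness clause of Theorem~\ref{colour to rank}: the composite on the right is \emph{the} unique $\Cc$-compatible morphism whose restriction to $E_{k,d(\gamma_1)}$ is $\theta_{\gamma_1}$ and whose starred restriction to $E_{k,[d(\gamma_1),\,d(\gamma_1)+d(\gamma_2)]}$ is $\theta_{\gamma_2}$, so it suffices to check that $\theta_{\gamma_1\gamma_2}$ (already known to be $\Cc$-compatible) has these two properties; both follow from the factorisation of $\gamma_1\gamma_2$ taken with initial segment of degree $p\le d(\gamma_1)$ and with $p\ge d(\gamma_1)$ respectively. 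The equality $s(\gamma_1)=r(\gamma_2)$ passes through $\psi^0$ to give $s(\theta_{\gamma_1})=r(\theta_{\gamma_2})$, so the composite is defined.

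For injectivity of $\theta$: if $\theta_\gamma=\theta_{\gamma'}$ then $d(\gamma)=d(\gamma')$, and since $\psi$ and $\sk{\,\cdot\,}$ are injective, $\gamma(m,m+e_i)=\gamma'(m,m+e_i)$ for every edge $m+v_i$ of $E_{k,d(\gamma)}$; factoring $\gamma$ and $\gamma'$ along a fixed decomposition of $d(\gamma)$ into generators of $\NN^k$ then gives $\gamma=\gamma'$.

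Surjectivity is the step I expect to be the main obstacle, since it amounts to reconstructing a path of $\Gamma$ from coloured-skeleton data. I would argue by induction on $|d(\lambda)|$ for $\lambda\in\Lambda_{(E,\Cc)}$: if $|d(\lambda)|\le 1$ then $\lambda=\lambda_v$ or $\lambda=\lambda_f$, and since $\psi^0$ and $\psi^1$ are bijections onto $E^0$ and $E^1$ from the labelled copies of $\Gamma^0$ and $\bigsqcup_i\Gamma^{e_i}$, one reads off the required $\gamma$; for the inductive step write $d(\lambda)=m+e_i$, use the factorisation property of the $k$-graph $\Lambda_{(E,\Cc)}$ to get $\lambda=\mu\,\lambda_f$ with $d(\mu)=m$, apply the inductive hypothesis to obtain $\alpha$ with $\theta_\alpha=\mu$ together with $g\in\Gamma^{e_i}$ with $\theta_g=\lambda_f$, note that $s(\mu)=r(\lambda_f)$ forces $s(\alpha)=r(g)$ (pushing the equality back through $\psi^0$), and conclude $\theta_{\alpha g}=\theta_\alpha\theta_g=\lambda$ by the functoriality already established. (Equivalently, one may take any path $x$ traversing $\psi^{-1}\circ\lambda$, let $\gamma$ be the product in $\Gamma$ of the edges underlying $x$, and check $\Phi_\gamma=\psi^{-1}\circ\lambda$.) In both versions it is precisely the completeness and associativity of $\Cc$ — funnelled through Theorem~\ref{colour to rank} — that make the reconstruction unambiguous.
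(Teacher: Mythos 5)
Your proposal is correct and follows essentially the same route as the paper: the same explicit formulas for $\theta_\gamma$, the same identification of the squares occurring in $\theta_\gamma$ with $\psi\circ\phi_{\gamma(n,n+e_i+e_j)}$ to get $\Cc$-compatibility, and the same edge-by-edge argument for injectivity. The only cosmetic difference is that the paper handles functoriality and surjectivity by choosing paths that traverse the relevant morphisms and invoking the uniqueness in Proposition~\ref{snake_giving_C_compatible_morphism}, whereas you use the restriction-based uniqueness of Corollary~\ref{cor_product} (plus an induction for surjectivity); these are interchangeable, and you note the traversal variant yourself.
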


The key technical result which we need to prove Theorems \ref{colour to
rank}~and~\ref{thm:uniqueness of Lambda_E} says that every path in the coloured graph $E$
determines a unique element of $\Lambda$. We first use the associativity condition to prove this in
the special case of a tri-coloured path of length three, and then deal with arbitrary paths using
an inductive argument.

\begin{lemma}\label{trav}
Let $E$ be a $k$-coloured graph and let $\Cc$ be a complete collection of squares
in $E$ which is associative. If $f,g,h \in E^1$ are of distinct colour and $fgh$ is a path in $E$, then there is a
unique $\Cc$-compatible coloured-graph morphism $\lambda:E_{k,d(fgh)} \to E$ such that $fgh$
traverses $\lambda$.
\end{lemma}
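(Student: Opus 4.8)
The plan is to build the coloured-graph morphism $\lambda : E_{k,d(fgh)} \to E$ on a cube $E_{k,e_i+e_j+e_\ell}$ (where $c(f)=c_i$, $c(g)=c_j$, $c(h)=c_\ell$) by first filling in the three visible faces of the cube using completeness of $\Cc$, then using completeness once more to fill the three hidden faces, and finally invoking associativity to check that the two natural ways of doing this agree so that $\lambda$ is well defined on all edges and vertices of the cube. Concretely, I would start from the path $fgh$ sitting along one edge-path of the cube, use completeness to find the unique square taking $fg$ to $g^1f^1$, then the unique square taking $f^1 h$ to $h^1 f^2$, then the unique square taking $g^1 h^1$ to $h^2 g^2$; this is exactly the left-hand picture of \eqref{eq:fgh rearrangement}, and it determines all three vertices and all edges of the cube. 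Symmetrically, reading $fgh$ the other way (factor off $gh$ first) produces the right-hand picture and another candidate assignment of edges $f_1,f_2,g_1,g_2,h_1,h_2$. Define $\lambda$ on $E_{k,e_i+e_j+e_\ell}$ using, say, the left-hand picture; I must check this really is a coloured-graph morphism (immediate from how squares in $\Cc$ are coloured-graph morphisms and how they glue along shared edges) and that every square occurring in it lies in $\Cc$ (each of the six faces is literally a square from $\Cc$ by construction), so $\lambda$ is $\Cc$-compatible; and finally that $fgh$ traverses $\lambda$, which is the statement that $\lambda(0+v_i)=f$, $\lambda(e_i+v_j)=g$, $\lambda(e_i+e_j+v_\ell)=h$ — true by the choice of the first square and the fact that $g=\lambda(e_i+v_j)$ reads off the $g$-edge of the cube lying over the path-position after $f$. (Here I should double-check the indexing: $fgh$ has a fixed colour sequence $c_ic_jc_\ell$, so the one edge-path of the cube with that colour sequence is the one along which $f,g,h$ must lie, and traversal pins $\lambda$ on exactly those three edges.)

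For uniqueness, suppose $\lambda'$ is another $\Cc$-compatible coloured-graph morphism with $fgh$ traversing it. Then $\lambda'$ agrees with $\lambda$ on the three edges $\{0+v_i,\ e_i+v_j,\ e_i+e_j+v_\ell\}$ and hence on the four vertices $0,\,e_i,\,e_i+e_j,\,e_i+e_j+e_\ell$ they touch. Now $\lambda'$ restricted to the $\{i,j\}$-face through $0$ is a square occurring in $\lambda'$, hence lies in $\Cc$, and it contains the edge $f=\lambda'(0+v_i)$; by completeness of $\Cc$ there is only one square in $\Cc$ containing $fg$ in that position, so this face of $\lambda'$ equals the corresponding face of $\lambda$, pinning down $g^1,f^1$ and the vertex $e_j$. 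The same argument on the $\{i,\ell\}$-face through $e_j$ (which now contains $f^1$ and $h$) pins down $h^1,f^2$ and the vertex $e_i+e_\ell$; finally the $\{j,\ell\}$-face through $0$ — containing $g^1$ and $h^1$, both now determined — pins down $h^2,g^2$ and the last vertex $e_\ell$. Thus $\lambda'=\lambda$ on every vertex and every edge of the cube, and the morphisms have the same domain $E_{k,d(fgh)}=E_{k,e_i+e_j+e_\ell}$, so $\lambda'=\lambda$.

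The main obstacle I anticipate is purely bookkeeping: checking that the two face-filling procedures (the two halves of \eqref{eq:fgh rearrangement}) are compatible, i.e. that defining $\lambda$ via the left picture is consistent — in particular that the resulting $\lambda$ is genuinely a function on edges (the edge $f^2$ obtained from the $\{i,\ell\}$-square must be the same as any edge that the $\{j,\ell\}$-square construction would assign to the same position, which is precisely where associativity is used: $f^2=f_2$, $g^2=g_2$, $h^2=h_2$). Without the associativity hypothesis the three "back" faces computed in the two orders need not match on the far corner's incident edges, and $\lambda$ would fail to be well defined; with it, the two pictures describe the same labelled cube, every face is in $\Cc$, and both the existence (traversal, $\Cc$-compatibility) and uniqueness arguments go through as above. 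The only other point requiring a word of care is the reduction to a single $\{i,j,\ell\}$-cube, i.e.\ noting $d(fgh)=e_i+e_j+e_\ell$ because $f,g,h$ have distinct colours, so $E_{k,d(fgh)}$ really is a three-dimensional cube and not something larger.
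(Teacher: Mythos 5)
Your proposal is correct and follows essentially the same route as the paper: use completeness to build the six faces of the cube from the two halves of \eqref{eq:fgh rearrangement}, invoke associativity ($f^2=f_2$, $g^2=g_2$, $h^2=h_2$) to glue them into a single well-defined $\Cc$-compatible morphism traversed by $fgh$, and obtain uniqueness because each face of any competitor is the unique square in $\Cc$ containing the relevant two-coloured path. The only bookkeeping slip is that the left-hand picture alone determines just three faces (nine edges and seven vertices, missing $e_i+e_\ell$ and its three incident edges), so both the construction and the uniqueness chase must also run through the faces containing $gh$, $fh_1$ and $f_1g_1$ --- as your final paragraph in effect acknowledges.
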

\begin{proof}
The completeness of $\Cc$ implies that there exist paths $f^i, g^i, h^i$ and $f_i, g_i, h_i$
satisfying the equations~\eqref{eq:fgh rearrangement}. Let $\lambda$ be the coloured-graph morphism
such that each of $fgh$, $fh_1g_1$, $h_2f_1g_1$, $h_2g_2f_2$, $g^1f^1h$, and $g^1h^1f^2$ traverses
$\lambda$. Associativity of $\Cc$ ensures that $\lambda$ is $\Cc$-compatible. Since the values of
the $f^i,g^i,h^i$ and $f_i,g_i,h_i$ are determined by $f$, $g$, $h$ and $\Cc$, if $fgh$ traverses
$\mu$ also, then $\mu = \lambda$.
\end{proof}

\begin{prop}\label{snake_giving_C_compatible_morphism}
Let $E$ be a $k$-coloured graph and let $\Cc$ be a complete collection of squares
in $E$ which is associative. For every $x \in E^*$ there is a unique $\Cc$-compatible coloured-graph morphism $\lambda_x
: E_{k,d(x)}\rightarrow E$ such that $x$ traverses $\lambda_x$.
\end{prop}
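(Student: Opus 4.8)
The plan is to prove the proposition by induction on $|x|$, using Lemma~\ref{trav} as the engine for handling the length-three tri-coloured case and a "rectangle-filling" argument to combine pieces. The base cases $|x| = 0$ and $|x| = 1$ are immediate: for $x \in E^0$ we take $\lambda_x = \lambda_x$ in the sense of Notation~\ref{ntn:Lambda_E setup} (a degree-zero morphism with no squares, hence vacuously $\Cc$-compatible), and for $x \in E^1$ we take $\lambda_x$ to be the obvious degree-$e_i$ morphism, again with no squares occurring in it. Uniqueness in these cases is trivial since a coloured-graph morphism of degree $0$ or $e_i$ is completely determined by the requirement that $x$ traverses it.

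For the inductive step, suppose the statement holds for all paths of length less than $|x|$, and write $x = x' x_{|x|}$ where $|x'| = |x| - 1$. By induction there is a unique $\Cc$-compatible $\lambda_{x'} : E_{k, d(x')} \to E$ traversed by $x'$. I would like to "append" the last edge $x_{|x|}$, of colour $c_i$ say, to get a morphism of degree $d(x') + e_i$. The subtlety is that $x_{|x|}$ may need to be "pushed past" edges of colour $c_i$ already present in $\lambda_{x'}$ in order to fill in a consistent $E_{k, d(x')+e_i}$; this is exactly where the commuting squares and associativity come in. Concretely, I would proceed by a secondary induction (or a direct combinatorial argument) that propagates the new edge through the existing morphism one square at a time: each time the new $c_i$-coloured edge meets a $c_j$-coloured edge ($j \ne i$), completeness of $\Cc$ produces the unique square relating them, which tells us how to continue, and Lemma~\ref{trav} (applied to tri-coloured length-three subpaths) guarantees that when the new edge meets two old edges of different colours the resulting local rearrangements are consistent — this is the associativity condition doing its work. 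Remark~\ref{rmk:composite traversals} will be used repeatedly to glue traversals of restrictions into traversals of larger morphisms.

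For uniqueness: if $\mu : E_{k, d(x)} \to E$ is any $\Cc$-compatible morphism traversed by $x$, then its restriction $\mu|_{E_{k,[0,d(x')]}}$ — more precisely the appropriate $\lambda|^*$-type restriction — is a $\Cc$-compatible morphism traversed by $x'$, hence equals $\lambda_{x'}$ by the inductive hypothesis. The remaining values of $\mu$ (those on grid points not in $E_{k,d(x')}$) are then forced: starting from the known part and using that every square occurring in $\mu$ lies in $\Cc$ together with the uniqueness clause of completeness, one shows inductively along the grid that $\mu$ is completely determined. Thus $\mu = \lambda_x$.

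The main obstacle I expect is the bookkeeping in the existence half of the inductive step: making precise the order in which grid points of $E_{k, d(x')+e_i}$ get filled in, and verifying that the process is well-defined (independent of choices) and produces a genuine coloured-graph morphism all of whose squares lie in $\Cc$. The associativity hypothesis is precisely what rules out the potential inconsistency — two different "routes" for propagating the new edge through a $3$-dimensional corner must agree — so the heart of the argument is a careful reduction of each such corner to an application of Lemma~\ref{trav}. I would organize this as: (1) show one can extend $\lambda_{x'}$ to the "first layer" adjacent to the new edge using repeated applications of completeness of $\Cc$; (2) show the resulting partial assignment extends consistently to all of $E_{k,d(x')+e_i}$ by an induction on $d(x')$ in which Lemma~\ref{trav} handles the only non-trivial case; and (3) check $\Cc$-compatibility of the result, which again reduces to Lemma~\ref{trav} for each newly-created square configuration.
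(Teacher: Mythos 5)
Your overall skeleton---induction on $|x|$, peeling off the last edge $f$ of colour $c_i$, using completeness of $\Cc$ to push $f$ back through the existing morphism and Lemma~\ref{trav} to reconcile three-colour corners---matches the paper's proof. But the heart of the argument, which you explicitly defer as ``the main obstacle I expect,'' is exactly the part that needs an idea, and your proposed way of supplying it (a secondary induction propagating the new edge one square at a time, with a confluence check at each grid point) is not carried out and would be substantially harder than what is actually needed. The paper's device is different and sharper: for each $j\ne i$ with $m_j>0$ (where $m=q(c(y))$ and $x=yf$), flip the single square $\lambda_y((m-e_j)+v_j)f\sim g^jh^j$ and then apply the \emph{primary} inductive hypothesis to the auxiliary path $z^jg^j$, where $z^j$ traverses $\lambda_y|_{E_{k,m-e_j}}$; since $|z^jg^j|=|y|$, this yields in one stroke the entire slab $\lambda^j\colon E_{k,m+e_i-e_j}\to E$, already known to be $\Cc$-compatible and unique. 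The decomposition $E_{k,m+e_i}=E_{k,m}\cup\bigcup_j E_{k,m+e_i-e_j}\cup\bigcup_j E_{k,[m-e_j,m+e_i]}$ then reduces the whole construction to gluing these slabs, and the only consistency that must be verified is that $\lambda^j$ and $\lambda^l$ agree on their overlap---a single application of Lemma~\ref{trav} to the tri-coloured path $\lambda_y((m-e_j-e_l)+v_l)\lambda_y((m-e_j)+v_j)f$ followed by the inductive hypothesis. Without something like this, your plan leaves open precisely the well-definedness and compatibility questions you name.

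A second, more localised error: your step (3) asserts that $\Cc$-compatibility of the extended morphism ``again reduces to Lemma~\ref{trav} for each newly-created square configuration.'' That is false for the squares lying in the new top slab whose two directions $j,l$ are both different from $i$: Lemma~\ref{trav} only controls a single tri-coloured path of length three, and such a square's membership in $\Cc$ depends on the entire history of how its edges were produced. In the paper these squares are disposed of for free, because each one lies inside some $E_{k,m+e_i-e_j}$ and the slab $\lambda^j$ is $\Cc$-compatible by the inductive hypothesis applied to $z^jg^j$. Your uniqueness paragraph is essentially right in outline (restrict $\mu$ to $E_{k,m}$ and invoke the inductive hypothesis, then use completeness to force the corner squares), but to close it cleanly you again need the slabs: from $\mu|^*_{E_{k,[m-e_j,m+e_i]}}=\phi^j$ one gets $\mu((m-e_j)+v_i)=g^j$, hence $z^jg^j$ traverses $\mu|_{E_{k,m+e_i-e_j}}$, and the inductive hypothesis forces this restriction to equal $\lambda^j$; the decomposition then gives $\mu=\lambda_x$.
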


\begin{rmk}
The notation of Proposition~\ref{snake_giving_C_compatible_morphism} is consistent with that of
Notation~\ref{ntn:Lambda_E setup} since $\lambda_v$ and $\lambda_f$ (see
Notation~\ref{ntn:Lambda_E setup}) are the unique morphisms such that $v$ traverses $\lambda_v$ and $f$
traverses $\lambda_f$.
\end{rmk}

\begin{proof}[Proof of Proposition~\ref{snake_giving_C_compatible_morphism}]
We prove this by induction on $|x|$. If $|x| = 0$, then the result is trivial.

Now suppose as an inductive hypothesis that for every $y \in E^*$ with $|y| \le n$, the path $y$
traverses a unique coloured-graph morphism $\lambda_y : E_{k,d(y)} \to E$. Fix a path $x \in E^*$
with $|x| = n+1$, and express $x = y f$ where $f \in E^1$, with $c(f) = c_i$, say.

Let $m := q(c(y))$. By the inductive hypothesis, $y$ traverses a unique $\Cc$-compatible
coloured-graph morphism $\lambda_y$. We complete the proof by consideration of three cases: $|\{j
\not= i : m_j > 0\}| = 0$, $|\{j \not= i : m_j > 0\}| = 1$, and then $|\{j \not= i : m_j > 0\}| \ge
2$.

Suppose first that $|\{j \not= i : m_j > 0\}| = 0$. Then $E_{k,d(x)} = E_{k,m} \cup
E_{k,[m,m+e_i]}$, so the formulae
\begin{equation}\label{eq:1-dim lambda_x}
\lambda_x|_{E_{k,m}} = \lambda_y,\quad \lambda_x(m + v_i) = f\quad\text{ and }\quad \lambda_x(m+e_i) = s(f)
\end{equation}
completely specify a coloured-graph morphism $\lambda_x$ such that $x$ traverses $\lambda_x$.
Furthermore, $\lambda_x$ is the unique such coloured-graph morphism: if $x$ also traverses $\mu$,
then $\mu$ satisfies the formulae~\eqref{eq:1-dim lambda_x}. This completes the proof when $|\{j
\not= i : m_j > 0\}| = 0$.

Suppose for the rest of the proof that $|\{j \not= i : m_j > 0\}| \ge 1$ (we will consider
separately later the cases $|\{j \not= i : m_j > 0\}| = 1$ and $|\{j \not= i : m_j > 0\}| \ge 2$).
Then
\begin{equation}\label{eq:decomp}
E_{k,m+e_i}
    = E_{k,m}
        \cup \Big(\bigcup_{j\not=i, m_j > 0} E_{k,m+e_i-e_j}\Big)
        \cup \Big(\bigcup_{j \not= i, m_j > 0} E_{k, [m-e_j, m+e_i]}\Big).
\end{equation}
For each $j\not=i$ such that $m_j > 0$, fix, for the remainder of the proof, a path $z^j$ which
traverses $\lambda_y|_{E_{k, m-e_j}}$.

\begin{claim}\label{claim:building lambda^j}
Suppose that $j \not= i$ satisfies $m_j > 0$. Let $\phi^j$ be the unique square in $\Cc$ traversed by
$\lambda_y((m-e_j)+v_j)f$. Let $g^j = \phi^j(0 + v_i)$ and $h^j = \phi^j(e_i + v_j)$, so $g^jh^j \sim \lambda_y((m-e_j)+v_j)f$. Then there is a unique coloured-graph morphism $\lambda^j : E_{k, m-e_j+e_i} \to E$ such that $\lambda^j|_{E_{k, m-e_j}} = \lambda_y|_{E_{k, m-e_j}}$ and $\lambda^j\big((m-e_j)+v_i\big) = g^j$.
\end{claim}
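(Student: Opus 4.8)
The plan is to recognise $\lambda^j$ as an object the inductive hypothesis has already produced, rather than to build the new ``layer'' of $E_{k,(m-e_j)+e_i}$ by hand: I would set $\lambda^j:=\lambda_{z^jg^j}$, the morphism traversed by the path $z^jg^j$. First I would check that $z^jg^j$ is a legitimate path: since $z^j$ traverses $\lambda_y|_{E_{k,m-e_j}}$ we have $s(z^j)=\lambda_y(m-e_j)$, and unwinding the definitions of $\phi^j$ and $g^j=\phi^j(0+v_i)$ gives $r(g^j)=\phi^j(0)=\lambda_y(m-e_j)$, so the concatenation makes sense. Its length is $|z^j|+1=|m-e_j|+1=|m|=|y|=n$, so the inductive hypothesis applies and hands us a unique coloured-graph morphism $\lambda_{z^jg^j}$ of degree $q(c(z^j))+e_i=(m-e_j)+e_i$ which is traversed by $z^jg^j$; this is the domain required in the claim.

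Next I would verify the two defining properties of $\lambda^j=\lambda_{z^jg^j}$. Restricting the traversal of $\lambda^j$ by $z^jg^j$ to its first $|z^j|$ edges shows directly from the definition of ``traverses'' that $z^j$ traverses $\lambda^j|_{E_{k,m-e_j}}$; but $z^j$ also traverses $\lambda_y|_{E_{k,m-e_j}}$ by choice, and since $|z^j|=n-1\le n$ the uniqueness clause of the inductive hypothesis forces $\lambda^j|_{E_{k,m-e_j}}=\lambda_y|_{E_{k,m-e_j}}$. Reading off the $(|z^j|+1)$-st edge of the traversal gives $\lambda^j\big((m-e_j)+v_i\big)=g^j$, as required.

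For uniqueness I would run the construction backwards. Let $\mu:E_{k,(m-e_j)+e_i}\to E$ be any coloured-graph morphism with $\mu|_{E_{k,m-e_j}}=\lambda_y|_{E_{k,m-e_j}}$ and $\mu\big((m-e_j)+v_i\big)=g^j$. Then $z^j$ traverses $\mu|_{E_{k,m-e_j}}$, and the single edge $g^j$ traverses $\mu|^*_{E_{k,[m-e_j,(m-e_j)+e_i]}}$ since that morphism sends $0+v_i$ to $\mu((m-e_j)+v_i)=g^j$; so Remark~\ref{rmk:composite traversals} shows that $z^jg^j$ traverses $\mu$. As $|z^jg^j|=n$, the inductive hypothesis gives $\mu=\lambda_{z^jg^j}=\lambda^j$, which is the desired uniqueness.

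I do not expect a genuine obstacle here: the whole content is that ``$\lambda^j$ is nothing new''. The only points needing care are the length bookkeeping (so that the inductive hypothesis legitimately covers both $z^j$, of length $n-1$, and $z^jg^j$, of length $n$) and the elementary observation that a traversal of a morphism restricts to a traversal of any initial sub-box; everything else is an application of Remark~\ref{rmk:composite traversals} and the uniqueness already available from the induction.
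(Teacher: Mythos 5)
Your proposal is correct and follows essentially the same route as the paper: both apply the inductive hypothesis to the length-$n$ path $z^jg^j$ to obtain $\lambda^j$, and both use the uniqueness clause of the induction (via the fact that $z^j$ traverses both $\lambda^j|_{E_{k,m-e_j}}$ and $\lambda_y|_{E_{k,m-e_j}}$) to identify the restriction. Your explicit uniqueness argument via Remark~\ref{rmk:composite traversals} spells out a step the paper leaves implicit, but it is the same mechanism.
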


To prove Claim~\ref{claim:building lambda^j}, observe that $|z^jg^j| = n$, so the inductive
hypothesis implies that $z^jg^j$ traverses a unique $\Cc$-compatible coloured-graph morphism
$\lambda^j$. Since $z^j$ traverses both $\lambda^j|_{E_{k, m-e_j}}$ and $\lambda_y|_{E_{k,
m-e_j}}$, the inductive hypothesis implies that the two are equal. This proves
Claim~\ref{claim:building lambda^j}.

Suppose now that $|\{j \not= i : m_j > 0\}| = 1$; let $j$ be the unique element of this set. Then
Claim~\ref{claim:building lambda^j} and~\eqref{eq:decomp} imply that there is a well-defined
function $\lambda_x : E_{k, m+e_i} \to E$ such that
\begin{equation}\label{eq:2-dim lambda_x}
    \lambda_x|_{E_{k,m}} = \lambda_y,\quad \lambda_x|_{E_{k, m+e_i-e_j}} = \lambda^j\quad\text{ and }\quad
        \lambda_x|^*_{E_{k, [m-e_j, m+e_i]}} = \phi^j.
\end{equation}
This $\lambda_x$ is $\Cc$-compatible by construction, and $x$ traverses $\lambda_x$. For
uniqueness, fix a $\Cc$-compatible coloured-graph morphism $\mu$ traversed by $x$.
Then $z^jg^jh^j$ traverses $\mu$. Hence $y$ traverses $\mu|_{E_{k,m}}$ and $z^jg^j$ traverses
$\mu|_{E_{k, m-e_j+e_i}}$. The inductive hypothesis forces $\mu|_{E_{k,m}} = \lambda_y$ and
$\mu|_{E_{k, m-e_j+e_i}} = \lambda^j$. That $\mu$ is $\Cc$-compatible implies that $\mu|^*_{E_{k,
[m-e_j, m+e_i]}} = \phi^j$. So $\mu = \lambda_x$. This proves the lemma when there is a unique $j
\not= i$ such that $m_j \not= 0$ as claimed.

We now consider the last remaining case: suppose that there are at least two distinct $j,l \not= i$
such that $m_j, m_l > 0$.

\begin{claim}\label{claim:distinct j} For distinct $j,l \not= i$ with $m_j, m_l
\not= 0$, we have $\lambda^j|_{E_{k, m+e_i-e_j-e_l}} = \lambda^l|_{E_{k, m+e_i-e_j-e_l}}$.
\end{claim}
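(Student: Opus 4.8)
The plan is to exploit the factorisation-style uniqueness in the inductive hypothesis by exhibiting a path of length $n-1$ that traverses both sides of the claimed equality. Fix distinct $j,l \neq i$ with $m_j, m_l > 0$, and set $p := m + e_i - e_j - e_l$. The morphisms $\lambda^j$ and $\lambda^l$ are the ones produced in Claim~\ref{claim:building lambda^j}; recall $\lambda^j$ is characterised as the unique $\Cc$-compatible coloured-graph morphism traversed by $z^j g^j$, where $z^j$ traverses $\lambda_y|_{E_{k,m-e_j}}$ and $g^j = \phi^j(0+v_i)$, with $\phi^j \in \Cc$ the unique square traversed by $\lambda_y((m-e_j)+v_j)f$. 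Since $\lambda^j|_{E_{k,m-e_j}} = \lambda_y|_{E_{k,m-e_j}}$ and $m - e_l \le m - e_j$ fails in general but $p = (m-e_j) - e_l + e_i$, I instead want a path witnessing $\lambda^j|_{E_{k,[0,p]}}$. First I would pick a path $w$ of length $|p| - 1$ traversing $\lambda_y|_{E_{k, p - e_i}} = \lambda_y|_{E_{k, m - e_j - e_l}}$ (this makes sense as $m - e_j - e_l \le m$), and observe that $w$ also traverses $\lambda^j|_{E_{k, m - e_j - e_l}}$ and $\lambda^l|_{E_{k, m - e_j - e_l}}$, since both restrict to $\lambda_y$ on the relevant subgraph.

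The heart of the argument is then to show that $w$ extended by one suitable $c_i$-coloured edge traverses both $\lambda^j$ and $\lambda^l$ on $E_{k,[0,p]}$, and that this extended path has length $|p| \le n - 1$, so the inductive hypothesis applies and forces the two restrictions to agree. Concretely, I would argue that $\lambda^j\big((m - e_j - e_l) + v_i\big)$ equals $\lambda^l\big((m - e_j - e_l) + v_i\big)$ directly: both are obtained from the edge $\lambda_y\big((m-e_j-e_l)+v_i\big)$ — which is not yet defined, so instead I reason via the square structure. The edge $g^j = \phi^j(0+v_i)$ lies over the corner $m - e_j$ of $\lambda^j$; pushing it back through the $c_l$-coloured edges of $\lambda_y$ using completeness of $\Cc$, one sees $\lambda^j((m-e_j-e_l)+v_i)$ is the $c_i$-edge of the square in $\Cc$ traversed by $\lambda_y((m-e_j-e_l)+v_l)\cdot\lambda^j((m-e_j-e_l)\ldots)$... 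This is getting circular, so the cleaner route is: let $u$ be a path of length $n-1$ traversing $\lambda_y|_{E_{k, m - e_l}}$ and chosen so that its restriction of length $n - |e_l|$ agrees with $z^l$ up to the corner $m - e_j - e_l$; then $u$ followed by the appropriate rearrangement of $g^l$ has length $\le n$, traverses $\lambda^l$, and its length-$(n-1)$ prefix traverses $\lambda^j|_{E_{k,p}}$ because the two-square rearrangement of $\lambda_y((m-e_j)+v_j)\,\lambda_y((m-e_l)+v_l)\,f$ (a tri-coloured length-three path in colours $c_j, c_l, c_i$, sitting inside $\lambda_y$ pushed forward by $f$) is governed by Lemma~\ref{trav}, whose associativity conclusion says the two ways of rearranging land on the same final edges.

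The main obstacle is bookkeeping: making precise which length-$n$ paths traverse $\lambda^j$ versus $\lambda^l$ and checking that a common length-$(n-1)$ prefix traverses both $\lambda^j|_{E_{k,p}}$ and $\lambda^l|_{E_{k,p}}$, so that the inductive hypothesis closes the argument. The conceptual content is entirely Lemma~\ref{trav} applied to the tri-coloured path $\lambda_y((m-e_j)+v_j)$, $\lambda_y((m-e_l)+v_l)$, $f$ in colours $c_j$, $c_l$, $c_i$: associativity of $\Cc$ guarantees that the edge reached ``over'' $p$ by first moving the $c_i$-edge $f$ past the $c_j$-edge (giving $\lambda^j$) and then past the $c_l$-edge agrees with the edge reached by first moving $f$ past the $c_l$-edge (giving $\lambda^l$) and then past the $c_j$-edge. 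Once that single edge equality is in hand, both $\lambda^j|_{E_{k,p}}$ and $\lambda^l|_{E_{k,p}}$ are traversed by $w$ extended by that edge, a path of length $|p| \le n - 1$ (since $|p| = |m| + 1 - 2 = n - 1$), and the inductive hypothesis gives $\lambda^j|_{E_{k,p}} = \lambda^l|_{E_{k,p}}$, which is the claim.
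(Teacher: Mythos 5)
Your final paragraph is essentially the paper's own proof: both arguments apply Lemma~\ref{trav} to the tri-coloured length-three path at the corner $m-e_j-e_l$ ending in $f$ (the paper names the resulting $\Cc$-compatible morphism $\psi^{j,l}$), deduce via completeness that $\lambda^j$ and $\lambda^l$ both send $(m-e_j-e_l)+v_i$ to the edge $\psi^{j,l}(0+v_i)$, and then feed the length-$(n-1)$ path $z\,\psi^{j,l}(0+v_i)$ into the inductive hypothesis. The one correction needed is that your tri-coloured path as written is not composable; it should be $\lambda_y\big((m-e_j-e_l)+v_j\big)\,\lambda_y\big((m-e_l)+v_l\big)\,f$ (or the paper's variant $\lambda_y\big((m-e_j-e_l)+v_l\big)\,\lambda_y\big((m-e_j)+v_j\big)\,f$) rather than $\lambda_y\big((m-e_j)+v_j\big)\,\lambda_y\big((m-e_l)+v_l\big)\,f$.
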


To establish Claim~\ref{claim:distinct j}, observe that since $i,j,l$ are all different,
Lemma~\ref{trav} implies that $\lambda_y\big((m - e_j - e_l)+v_l\big) \lambda_y\big((m-e_j)+v_j) f$
traverses a unique $\Cc$-compatible graph morphism $\psi^{j,l}$. We show that
\begin{equation}\label{eq:agrees with psi}
\lambda^j|^*_{E_{k, [m-e_l-e_j, m+e_i-e_j]}} = \psi^{j,l}|_{E_{k,e_i+e_l}}
    \quad\text{and}\quad
\lambda^l|^*_{E_{k, [m-e_l-e_j, m+e_i-e_l]}} = \psi^{j,l}|_{E_{k,e_i+e_j}}.
\end{equation}
By symmetry, it suffices to establish that $\lambda^j|^*_{E_{k, [m-e_l-e_j, m+e_i-e_j]}} =
\psi^{j,l}|_{E_{k,e_i+e_l}}$. Since $\lambda_y\big((m-e_j)+v_j\big)f \sim g^j h^j$, and since
$\psi^{j,l}$ is a $\Cc$-compatible coloured-graph morphism,
\[
\lambda_y\big((m - e_j - e_l)+v_l\big)g^jh^j = \lambda^j\big((m - e_j - e_l)+v_l\big) \lambda^j\big((m-e_j)+v_i\big)h^j
    \qquad\text{traverses $\psi^{j,l}$}.
\]
Since $\Cc$ is a complete collection of squares, $\lambda^j|^*_{E_{k, [m-e_l-e_j, m+e_i-e_j]}} =
\psi^{j,l}|_{E_{k,e_i+e_l}}$. This proves~\eqref{eq:agrees with psi}.

To complete the proof of Claim~\ref{claim:distinct j}, note that
\[
\lambda^j|_{E_{k, m-e_j-e_l}} = \lambda_y|_{E_{k, m-e_j-e_l}} = \lambda^l|_{E_{k, m-e_j-e_l}}.
\]
Suppose that $z$ traverses this morphism. Equation~\eqref{eq:agrees with psi} implies that $z
\psi^{j,l}(0+v_i)$ traverses each of $\lambda^j|_{E_{k, m+e_i-e_j-e_l}}$ and $\lambda^l|_{E_{k,
m+e_i-e_j-e_l}}$. The inductive hypothesis now establishes Claim~\ref{claim:distinct j}.

For $j \not= i$ such that $m_j > 0$, let $\phi^j$ and $\lambda^j$ be as in
Claim~\ref{claim:building lambda^j}. Then Claim~\ref{claim:distinct j} implies that the formulae
\[
\lambda_x|_{E_{k,m}} = \lambda_y|_{E_{k,m}},
    \quad \lambda_x|_{E_{k,m+e_i-e_j}} = \lambda^j,\quad\text{and}\quad
\lambda_x|^*_{E_{k,[m-e_j,m+e_i]}} = \phi^j
\]
determine a well-defined coloured-graph morphism $\lambda_x : E_{k,m+e_i} \to E$. Moreover
$\lambda_x$ is $\Cc$-compatible because each square occurring in $\lambda_x$ occurs in $\lambda_y$,
in one of the $\lambda^j$ or in one of the $\phi^j$.

To see that $\lambda_x$ is the unique $\Cc$-compatible coloured-graph morphism which $x$ traverses,
fix a $\Cc$-compatible coloured-graph morphism $\mu$ traversed by $x$. Then $y$ traverses
$\mu|_{E_{k,m}}$, so the inductive hypothesis implies that $\mu|_{E_{k,m}} = \lambda_y$. Fix $j
\not= i$ such that $m_j > 0$. That $\Cc$ is a complete collection of squares and that
$\lambda_y\big((m-e_j)+v_j\big)f$ traverses $\mu|^*_{E_{k,[m-e_j, m+e_i]}}$ implies that
$\mu|^*_{E_{k,[m-e_j, m+e_i]}} = \phi^j$. In particular, $\mu\big((m-e_j)+v_i\big) = g^j$, and
hence $z^jg^j$ traverses $\mu|_{E_{k, m-e_j+e_i}}$. The the inductive hypothesis forces
$\mu|_{E_{k,m+e_i-e_j}} = \lambda^j$. It now follows from~\eqref{eq:decomp} that $\mu = \lambda_x$.
\end{proof}

\begin{cor}\label{cor_product}
Let $E$ be a $k$-coloured graph and let $\Cc$ be a complete collection of squares
in $E$ which is associative. If $\mu:E_{k,m}\rightarrow E$ and $\nu:E_{k,n}\rightarrow E$ are $\Cc$-compatible
coloured-graph morphisms such that $s(\mu)=r(\nu)$, then there exists a unique $\Cc$-compatible
coloured-graph morphism $\mu\nu:E_{k,m+n}\rightarrow E$, called the \emph{composition} of $\mu$ and
$\nu$ such that $(\mu\nu)|_{E_{k,m}}=\mu$ and $(\mu\nu)|_{E_{k,[m,m+n]}}^*=\nu$.
\end{cor}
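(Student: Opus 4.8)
The plan is to deduce the corollary directly from Proposition~\ref{snake_giving_C_compatible_morphism}, using the device of ``traversing paths'' to transport the concatenation operation from $E^*$ to $\Cc$-compatible coloured-graph morphisms. First I would pick a path $a \in E^*$ that traverses $\mu$ (such a path exists by the observation following the definition of ``traverses'', since $\mu$ is a finite coloured-graph morphism) and a path $b \in E^*$ that traverses $\nu$. Since $s(\mu) = r(\nu)$ and $a$ ends at $s(\mu)$ while $b$ begins at $r(\nu)$, the concatenation $ab$ is a legitimate path in $E^*$, and $q(c(ab)) = q(c(a)) + q(c(b)) = d(\mu) + d(\nu) = m+n$. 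Applying Proposition~\ref{snake_giving_C_compatible_morphism} to $ab$ produces a unique $\Cc$-compatible coloured-graph morphism $\mu\nu := \lambda_{ab} : E_{k,m+n} \to E$ which $ab$ traverses. I would then define the composition to be this $\lambda_{ab}$.

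Next I must check that $\mu\nu$ has the two claimed restriction properties and that it does not depend on the choices of $a$ and $b$. For the restriction properties: since $ab$ traverses $\lambda_{ab}$ and $a$ is an initial segment of $ab$ of shape $m$, a short computation unwinding the definition of ``traverses'' shows that $a$ traverses $\lambda_{ab}|_{E_{k,m}}$; but $a$ also traverses $\mu$, so the uniqueness clause of Proposition~\ref{snake_giving_C_compatible_morphism} (which is exactly the inductive hypothesis machinery already in place) forces $\lambda_{ab}|_{E_{k,m}} = \mu$. An entirely parallel argument with the terminal segment $b$ and the starred restriction $\lambda_{ab}|^*_{E_{k,[m,m+n]}}$ — here Remark~\ref{rmk:composite traversals} is the relevant bookkeeping, read in reverse — gives $\lambda_{ab}|^*_{E_{k,[m,m+n]}} = \nu$. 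Independence of the choices is then automatic: if $\mu\nu'$ is any $\Cc$-compatible morphism with $(\mu\nu')|_{E_{k,m}} = \mu$ and $(\mu\nu')|^*_{E_{k,[m,m+n]}} = \nu$, then picking traversing paths $a$ of $\mu$ and $b$ of $\nu$ as above, Remark~\ref{rmk:composite traversals} shows $ab$ traverses $\mu\nu'$, whence $\mu\nu' = \lambda_{ab}$ by uniqueness in Proposition~\ref{snake_giving_C_compatible_morphism}. This simultaneously establishes existence, the restriction identities, and uniqueness.

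The only genuinely fiddly point — and the step I expect to take the most care — is verifying cleanly that ``$ab$ traverses $\lambda$'' is equivalent to ``$a$ traverses $\lambda|_{E_{k,m}}$ and $b$ traverses $\lambda|^*_{E_{k,[m,m+n]}}$''. One direction is precisely the content of Remark~\ref{rmk:composite traversals}; the converse (splitting a traversal of $\lambda$ along the decomposition at $m$) requires the same index manipulation run backwards, keeping track of the fact that the shape of the prefix $a_1\cdots a_{|a|}$ is exactly $m$ so that the ``otherwise'' branch of the case split in Remark~\ref{rmk:composite traversals} engages at the right index. Once that equivalence is in hand the rest is formal, and no appeal to the associativity of $\Cc$ is needed beyond what is already absorbed into Proposition~\ref{snake_giving_C_compatible_morphism}.
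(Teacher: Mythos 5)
Your proposal is correct and follows essentially the same route as the paper: fix traversing paths for $\mu$ and $\nu$, apply Proposition~\ref{snake_giving_C_compatible_morphism} to their concatenation to define $\mu\nu$, read off the restriction identities from uniqueness of traversed morphisms, and use Remark~\ref{rmk:composite traversals} for uniqueness of the composite. The splitting direction you flag as fiddly is indeed the only step the paper leaves implicit, and your account of it is accurate.
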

\begin{proof}
Fix $x,y \in E^*$ such that $x$ traverses $\mu$ and $y$ traverses $\nu$.
Proposition~\ref{snake_giving_C_compatible_morphism} implies that $xy$ traverses a unique
$\Cc$-compatible coloured-graph morphism $\mu\nu$. Then $x$ traverses $(\mu\nu)|_{E_{k,m}}$, and
$y$ traverses $(\mu\nu)|_{E_{k,[m,m+n]}}^*$, so
Proposition~\ref{snake_giving_C_compatible_morphism} implies that $(\mu\nu)|_{E_{k,m}}=\mu$ and
$(\mu\nu)|_{E_{k,[m,m+n]}}^*=\nu$.

Moreover, if $\lambda$ is any other coloured-graph morphism such that $\lambda|_{E_{k,m}} = \mu$
and $\lambda|^*_{E_{k,[m,m+n]}} = \nu$ then Remark~\ref{rmk:composite traversals} shows that $xy$
traverses $\lambda$ so uniqueness in Proposition~\ref{snake_giving_C_compatible_morphism} forces
$\lambda = \mu\nu$.
\end{proof}

\begin{rmk}\label{factorisation}
Let $E$ be a $k$-coloured graph and let $\Cc$ be a complete collection of squares
in $E$ which is associative. Fix $m \leq n$ in $\NN^k$ and suppose that $\lambda:E_{k,n}\rightarrow E$ is a $\Cc$-compatible coloured-graph morphism. Corollary~\ref{cor_product} implies that $\mu :=
\lambda|_{E_{k,m}}$ and $\nu := \lambda|_{E_{k,[m,n]}}^*$ satisfy $\mu\nu=\lambda$. Suppose that
$\mu':E_{k,m}\to E$ and $\nu':E_{k,n-m}\to E$ are another two $\Cc$-compatible coloured-graph
morphisms such that $\mu^\prime\nu^\prime=\lambda$. Then $\mu^\prime=\lambda|_{E_{k,m}}=\mu$ and
$\nu^\prime=\lambda|_{E_{k,[m,n]}}^*=\nu$. So $\mu$ and $\nu$ are the unique coloured-graph
morphisms with $d(\mu) = m$, $d(\nu) = n$ and $\lambda = \mu\nu$.
\end{rmk}

\begin{cor}\label{cor_associative_product}
Let $E$ be a $k$-coloured graph and let $\Cc$ be a complete collection of squares
in $E$ which is associative. If $\lambda:E_{k,l}\rightarrow E$, $\mu:E_{k,m}\rightarrow E$ and $\nu:E_{k,n}\rightarrow
E$ are $\Cc$-compatible coloured-graph morphisms such that $s(\lambda)=r(\mu)$ and $s(\mu)=r(\nu)$,
then $\lambda(\mu\nu)=(\lambda\mu)\nu$.
\end{cor}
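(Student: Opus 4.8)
The plan is to reduce the statement to the uniqueness clause of Proposition~\ref{snake_giving_C_compatible_morphism} by choosing paths in $E^*$ that traverse the three given morphisms. First I would fix paths $x,y,z \in E^*$ with $x$ traversing $\lambda$, $y$ traversing $\mu$ and $z$ traversing $\nu$; such paths exist by the observation following the definition of ``traverses'' in Section~3. Since $x$ traverses $\lambda$ we have $s(x) = s(\lambda)$ and $r(x) = r(\lambda)$, and similarly for $y$ and $z$; combined with the hypotheses $s(\lambda) = r(\mu)$ and $s(\mu) = r(\nu)$ this gives $s(x) = r(y)$ and $s(y) = r(z)$, so the concatenation $xyz$ is a well-defined path in $E^*$. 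As concatenation of paths is associative, $x(yz) = (xy)z = xyz$ as an element of $E^*$.

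Next I would recall from the proof of Corollary~\ref{cor_product} the following characterisation of composition: whenever $a \in E^*$ traverses a $\Cc$-compatible morphism $\alpha$ and $b \in E^*$ traverses a $\Cc$-compatible morphism $\beta$ with $s(\alpha) = r(\beta)$, the path $ab$ traverses the composition $\alpha\beta$ (indeed $\alpha\beta$ is the morphism $\lambda_{ab}$ of Proposition~\ref{snake_giving_C_compatible_morphism}). Applying this to $y$ and $z$ shows that $yz$ traverses $\mu\nu$, and then applying it to $x$ and $yz$ shows that $xyz = x(yz)$ traverses $\lambda(\mu\nu)$. Symmetrically, $xy$ traverses $\lambda\mu$, and then $xyz = (xy)z$ traverses $(\lambda\mu)\nu$. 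Both $\lambda(\mu\nu)$ and $(\lambda\mu)\nu$ are $\Cc$-compatible coloured-graph morphisms with domain $E_{k,l+m+n}$ (since $d(\lambda(\mu\nu)) = l + (m+n) = (l+m) + n = d((\lambda\mu)\nu)$), and they are traversed by the same path $xyz$, so the uniqueness assertion of Proposition~\ref{snake_giving_C_compatible_morphism} forces $\lambda(\mu\nu) = (\lambda\mu)\nu$.

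I do not anticipate a genuine obstacle here: the argument is a direct application of uniqueness. The only points requiring a little care are verifying that $xyz$ is a legitimate path (which follows from the source/range hypotheses together with the behaviour of $r$ and $s$ under traversal) and correctly extracting from the proof of Corollary~\ref{cor_product} the fact that the composition of two $\Cc$-compatible morphisms is exactly the unique $\Cc$-compatible morphism traversed by the concatenation of any pair of traversing paths. Once these are in place, associativity is immediate.
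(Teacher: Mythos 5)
Your argument is correct and is essentially identical to the paper's proof: both fix traversing paths for $\lambda$, $\mu$, $\nu$, observe that the concatenation traverses both $(\lambda\mu)\nu$ and $\lambda(\mu\nu)$, and conclude by the uniqueness assertion of Proposition~\ref{snake_giving_C_compatible_morphism}. The only cosmetic difference is that the paper cites Remark~\ref{rmk:composite traversals} for the fact that concatenations traverse compositions, whereas you extract it directly from the construction in Corollary~\ref{cor_product}; either is fine.
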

\begin{proof}
Fix $x_\lambda,x_\mu,x_\nu \in E^*$ such that $x_\lambda$ traverses $\lambda$, $x_\mu$ traverses
$\mu$ and  $x_\nu$ traverses $\nu$. Repeated applications of Remark~\ref{rmk:composite traversals}
show that $x_\lambda x_\mu$ traverses $\lambda\mu$. Hence $x_\lambda x_\mu x_\nu = (x_\lambda
x_\mu) x_\nu$ traverses $(\lambda\mu)\nu$. Similarly, $x_\lambda x_\mu x_\nu = x_\lambda (x_\mu
x_\nu)$ traverses $\lambda(\mu\nu)$. So Proposition~\ref{snake_giving_C_compatible_morphism}
implies that $(\lambda\mu)\nu=\lambda(\mu\nu)$.
\end{proof}

\begin{proof}[Proof of Theorem~\ref{colour to rank}]
The first statement of the theorem is precisely Corollary~\ref{cor_product}. We must check that
$\Lambda$ is a category. For composable $\mu,\nu$ we have
\[
s(\mu\nu) = (\mu\nu)(d(\mu\nu)) = (\mu\nu)|_{E_{k,[d(\mu),d(\mu)+d(\nu)]}}^* (d(\nu)) = \nu(d(\nu)) =
s(\nu),
\]
and similarly $r(\mu\nu) = r(\mu)$. Associativity of composition follows from
Corollary~\ref{cor_associative_product}. For $v \in E^0$, we have $r(\lambda_v)=\lambda_v(0)=v$ and
$s(\lambda_v)=\lambda_v(d(\lambda_v))=\lambda_v(0)=v$. Moreover, if $r(\mu)=\lambda_v$ and
$s(\nu)=\lambda_v$, then Remark~\ref{factorisation} implies that $\mu =\lambda_v\mu$ and $\nu
=\nu\lambda_v.$ Hence $\Lambda$ is a category.

Since $\NN^k$ as a category has only one object, $d$ trivially respects $r$ and $s$. It follows
immediately from the definition of composition (see Corollary~\ref{cor_product}) that $d$ respects
composition. So $d$ is a functor. Remark~\ref{factorisation} shows that $d$ satisfies the
factorisation property. So $(\Lambda,d)$ is a $k$-graph.

It remains to show that $\rho$ defines an isomorphism of $E$ with $E_\Lambda$ and that $\rho \circ
\phi \in \Cc_\Lambda$ for each $\phi \in \Cc$. The map $v \mapsto \sk{\lambda_v}$ is a bijection.
We established above that $f \mapsto \lambda_f$ is a range- and source-preserving bijection between
$c^{-1}(c_i) \subset E^1$ and $\Lambda^{e_i}$. We defined $E_\Lambda^1 = \{\sk{f} : f \in
\bigcup^k_{i=1}\Lambda^{e_i}\}$ (see Definition~\ref{dfn:E_Lambda def}). For each $f \in E^1$,
$\lambda_f$ is the unique coloured-graph morphism traversed by $f$, and $\sk{\lambda_f} \in
E^1_\Lambda$ satisfies $c_{E_\Lambda}(\sk{\lambda_f}) = c_i = c(f)$, $r(\sk{\lambda_f}) =
\sk{\lambda_f(0)} = \sk{\lambda_{r(f)}}$, and $s(\sk{\lambda_f}) = \sk{\lambda_f(e_i)} =
\sk{\lambda_{s(f)}}$. Since $\rho^1$ is bijective, the pair $(\rho^0, \rho^1) : E \to E_\Lambda$ is
an isomorphism of coloured graphs. To see that it preserves squares, fix $\psi \in \Cc$. Then $\rho
\circ \psi$ is the square $\phi_\psi$ of~\eqref{eq:phi_lambda} and hence belongs to
$\Cc_{\Lambda_{(E,\Cc)}}$ as required.
\end{proof}

\begin{proof}[Proof of Theorem~\ref{thm:uniqueness of Lambda_E}]
For $\gamma \in \Gamma$ define $\theta_{\gamma} : E_{k,m} \to E$ as in
\eqref{eq:theta(gamma)0}~and~\eqref{eq:theta(gamma)1}. Then $r(\theta_{\gamma}^1(m + v_i)) =
r(\psi^1(\sk{\gamma(m, m+e_i)})) = \psi^0(\gamma(m)) = \theta_{\gamma}^0(m)$ and similarly at the
source, so $\theta_{\gamma}$ is a graph morphism. Since $\psi^1$ preserves colour, we have
\[
c_E(\theta_{\gamma}^1(m + v_i))
    = c_E(\psi^1(\sk{\gamma(m, m+e_i)}))
    = c_{E_\Gamma}(\sk{\gamma(m, m+e_i)})
    = c_i
    = c_{E_{k, d(\gamma)}}(m + v_i),
\]
so $\theta_{\gamma}$ is a coloured-graph morphism.

To see that $\theta_\gamma$ is $\Cc$-compatible, fix a square $\alpha$ occurring in
$\theta_{\gamma}$. Then there exist $m \in \NN^k$ and $i, j \le k$ such that $\alpha(x) =
\theta_{\gamma}(x + m)$ for all $x \in E_{k, e_i + e_j}$. Let $\lambda := \gamma(m, m+e_i+e_j)$.
Then $\alpha^0(n) = \theta_{\gamma}^0(m+n) = \psi^0(\lambda(n))$ for $0 \le n \le e_i + e_j$, and
$\alpha^1(n + v_l) = \theta_{\gamma}^1(m+n+v_l) = \psi^1(\sk{\lambda(n, n+e_l)})$ whenever $n,
n+e_l \le e_i + e_j$. That is, $\alpha = \psi \circ \phi_\lambda$ where $\phi_\lambda \in
\Cc_\Gamma$ is as in Definition~\ref{dfn:E_Lambda def}. By hypothesis, that $\phi_\lambda \in
\Cc_\Gamma$ implies that $\alpha \in \Cc$, and hence $\theta_{\gamma}$ is $\Cc$-compatible. Hence
$\theta_{\gamma} \in \Lambda_{(E,\Cc)}^{d(\gamma)}$.

The assignment $\gamma \mapsto \theta_\gamma$ is a degree, range and source preserving map $\theta
: \Gamma \to \Lambda_{(E,\Cc)}$. To see that $\theta$ is injective, fix $\gamma, \gamma' \in
\Gamma$ and suppose that $\theta_{\gamma} = \theta_{\gamma'}$. Write $\gamma = \gamma_1 \dots
\gamma_n$ where each $d(\gamma_i) \in \{e_1, \dots, e_k\}$, and $\gamma' = \gamma'_1 \dots
\gamma'_n$ where each $d(\gamma'_i) = d(\gamma_i)$. For $i \le n$ define $p_i := \sum^i_{j=1}
d(\gamma_j)$. Then for each $i \le n$,
\[
\psi^1(\sk{\gamma_i}) = \theta^1_{\gamma}(p_{i-1}, p_i)
    = \theta^1_{\gamma'}(p_{i-1}, p_i\Big)
    = \psi^1(\sk{\gamma'_i}).
\]
Since $\psi^1$ is injective, it follows that $\sk{\gamma_i} = \sk{\gamma'_i}$ and hence $\gamma_i =
\gamma'_i$. So $\theta$ is injective.

To see that $\theta$ preserves composition, fix $\gamma,\gamma' \in \Gamma$ with $s(\gamma) =
r(\gamma')$, and fix paths $\psi^1(\sk{\gamma_1}) \dots \psi^1(\sk{\gamma_m})$ and
$\psi^1(\sk{\gamma'_1}) \dots \psi^1(\sk{\gamma'_n})$ which traverse $\theta_{\gamma}$ and
$\theta_{\gamma'}$. Then
\[
    \psi^1(\sk{\gamma_1}) \dots \psi^1(\sk{\gamma_m})\psi^1(\sk{\gamma'_1}) \dots \psi^1(\sk{\gamma'_n})
\]
traverses both $\theta_{\gamma\gamma'}$ and
$\theta_{\gamma}\theta_{\gamma'}$. So $\theta_{\gamma\gamma'} = \theta_{\gamma}\theta_{\gamma'}$ by
Proposition~\ref{snake_giving_C_compatible_morphism}. So $\theta$ is a functor.

To see that $\theta$ is surjective, fix $\lambda \in \Lambda_{(E,\Cc)}$ and a path $f_1 \dots f_m$
which traverses $\lambda$. Then each $f_i \in E^1$, and since $\psi^1$ is surjective, each $f_i =
\psi^1(g_i)$ for some $g_i \in E_\Gamma^1$. Each $g_i = \sk{\gamma_i}$ for some $\gamma_i \in
\Gamma$. Let $\gamma := \gamma_1 \dots \gamma_n$. Then $f_1 \dots f_m = \psi^1(\sk{\gamma_1}) \dots
\psi^1(\sk{\gamma_m})$ traverses both $\lambda$ and $\theta_{\gamma}$. So
Proposition~\ref{snake_giving_C_compatible_morphism} implies that $\theta_{\gamma} = \lambda$ and
hence $\theta$ is surjective. Thus $\theta$ is an isomorphism $\Gamma \cong \Lambda_{(E,\Cc)}$.
\end{proof}

\begin{example}
We now show how the $2$-graphs of \cite{PRW2009} fit our present mould. Let $(T,q,t,w)$ be basic
data, as in \cite[\S3]{PRW2009}. Thus $T$ is a finite hereditary subset of $\N^2$ with corners
$d_1e_1$, $d_2e_2$ satisfying $d_i\geq 1$, $q\in \N$, $t\in \ZZ/q\ZZ$ and $w:T\to \ZZ/q\ZZ$ such
that both $w(d_1e_1)$ and $w(d_2e_2)$ are nonzero. If $S$ is a subset of $\N^2$ which contains a
translate $R+n$ of a set $R$ and $f:S\to \ZZ/q\ZZ$, then we define $f|^*_{R+n}$ to be the function
$f|^*_{R+n}:R\to\ZZ/q\ZZ$ defined by $f|^*_{R+n}(i)=f(i+n)$. For $m\in \N^2$, we write
$T(m):=\bigcup\{T+n:{0\leq n\leq m}\}$.

The vertices in the $2$-graph $\Lambda(T,q,t,w)$ are the functions $v:T\to \ZZ/q\ZZ$ satisfying
$\sum_{i\in T}w(i)v(i)=t$, the paths of degree $m$ are the functions $\lambda:T(m)\to\ZZ/q\ZZ$ such
that $\lambda|^*_{T+n}\in \Lambda^0$ whenever $0\leq n\leq m$, and the range and source of
$\lambda\in \Lambda^m$ are given by $r(\lambda)=\lambda|_T$ and $s(\lambda)=\lambda|^*_{T+m}$. It
is shown in \cite[Theorem~3.4]{PRW2009} that there is a well-defined composition on
$\Lambda(T,q,t,w)$ that makes $\Lambda(T,q,t,w)$ into a $2$-graph with factorisations given by
$\lambda=(\lambda_{T(n)})(\lambda|^*_{T(m-n)+n})$.

The $2$-graph $\Lambda(T,q,t,w)$ fits our construction as follows. We define a coloured graph $E$
by $E^0=\Lambda^0$, $E^{c_i}:=\Lambda^{e_i}$, $E^1:=E^{c_1}\cup E^{c_2}$, and restricting $r,s$.
Then for each path $\lambda\in\Lambda^m$, the formulas
\[
\mu_\lambda(n)=\lambda|^*_{T+n}\ \text{ and }\ \mu_\lambda(n+v_i)=\lambda|^*_{(T\cup(T+e_i))+n}
\]
define a coloured-graph morphism, and hence a path of degree $m$ in the $2$-graph defined by $E$
with the squares given by the coloured-graph morphisms arising from the paths of degree $e_i+e_j$
in $\Lambda$. One can check, by directly constructing the inverse, that $\lambda \mapsto
\mu_\lambda$ is an isomorphism of $2$-graphs.
\end{example}

\section{Topology of path spaces}\label{sec:topology}

In \cite[Proposition~4.3]{PQR2004} the authors appeal to general category-theoretic results
\cite{Schubert:categories} to see that given a $k$-coloured graph $E$ and a complete collection of
squares $\Cc$ in $E$ which is associative, the corresponding $k$-graph $\Lambda_{(E,\Cc)}$ is
isomorphic to the quotient of the category $E^*$ under the equivalence relation $\sim$ generated by
\begin{equation}\label{eq:generating equivalences}
    \begin{split}
    \textstyle \bigcup_{n \ge 2} \{(x, y) \in E^n \times E^n: {}& \text{there exists } i < n \text{ such that } \\
        &\ x_j = y_j\text{ whenever } j \not\in \{i, i+1\} \text{ and } x_i x_{i+1} \sim_\Cc y_i y_{i+1}\}.
    \end{split}
\end{equation}
We start this section with a direct proof of this assertion by showing that each equivalence class
for $\sim$ is the set of paths which traverse some $\lambda \in \Lambda_{(E,\Cc)}$. We show that
the quotient map extends to a surjection from the space of all paths in $E$ to the space of all
paths in $\Lambda$.

We then restrict attention to $k$-graphs which are row-finite with no sources in the sense that
$0<|v\Lambda^{e_i}|<\infty$ for all $v \in \Lambda^0$ and $ i \leq k$ (see \cite{KP2000}). In this
context, the space $\Lambda^\infty$ of infinite paths in $\Lambda$ (see Remark \ref{rmk:infinite
paths} for a precise definition) --- under the topology with basic open sets $\Zz(\mu) := \{x \in
\Lambda^\infty : x(0,d(\mu)) = \mu\}$ indexed by $\mu \in \Lambda$ --- is a locally compact
Hausdorff space. Furthermore, it is the unit space of the groupoid $\Gg_\Lambda$ used to define
$C^*(\Lambda)$ in \cite{KP2000}.

We show that $\Lambda^\infty$ is the topological quotient of the space
\begin{equation}\label{eq:Ekinfty}
    \Ekinfty := \big\{x \in E^\infty : |\{i : c(x_i) = c_j\}| = \infty \text{ for each } j \le k\big\}
\end{equation}
of $k$-infinite paths in $E$. We also show that $\Ekinfty$ is a closed subspace of $E^\infty$. Lastly, we present an example which shows that these results do not necessarily hold if $\Lambda$ is not row-finite.

The following elementary lemma can be deduced from more general results in the literature (for
example \cite[Theorem~3.9]{Green:thesis}), but we provide a straightforward proof for completeness.
Recall that $q$ denotes the quotient map from $\FF^+_k$ to $\NN^k$.

\begin{lemma}\label{lem:shuffle}
Fix $w, w' \in \FF^+_k$ and suppose that $q(w) = q(w')$. Then there is a finite sequence
$(w^i)^m_{i=1}$ in $\FF^+_k$ such that $w^1 = w$, $w^m = w'$, and for each $i < m$ there exists
$j_i < |w|$ such that $w^i_l = w^{i+1}_l$ for $l \not\in \{j_i, j_i+1\}$, $w^i_{j_i} =
w^{i+1}_{j_i+1}$ and $w^i_{j_i+1} = w^{i+1}_{j_i}$.
\end{lemma}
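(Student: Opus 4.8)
The plan is to prove the statement by induction on $|w|$, the common length of the two words. The base cases $|w| = 0$ and $|w| = 1$ are trivial, since $q(w) = q(w')$ forces $w = w'$ and the sequence consists of the single word $w$. So suppose $|w| = n \ge 2$ and that the result holds for all shorter words. Write $w = c_a u$ where $c_a$ is the first letter of $w$ and $u \in \FF^+_k$ has length $n-1$. Since $q(w) = q(w')$ and $c_a$ contributes $e_a$ to this common shape, the letter $c_a$ must occur somewhere in $w'$; let $j$ be the position of its \emph{first} occurrence, so $w' = v c_a v'$ where $v$ consists of letters all different from $c_a$ (in position $1$ through $j-1$).

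First I would handle the reduction that moves this occurrence of $c_a$ to the front of $w'$. Since each letter of $v$ differs from $c_a$, I can apply $j-1$ elementary transpositions --- swapping $c_a$ successively past $v_{j-1}$, $v_{j-2}$, \dots, $v_1$ --- to pass from $w'$ to the word $c_a v v'$, and each of these is exactly a move of the type permitted in the statement (transposing adjacent letters in positions $l, l+1$). This produces a finite sequence from $w'$ to $c_a v v'$ in which consecutive words differ by one such transposition. Reading this sequence backwards, I get a sequence from $c_a v v'$ to $w'$ of the required form.

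Next I would invoke the inductive hypothesis. Both $u$ and $v v'$ have length $n-1$, and $q(u) = q(w) - e_a = q(w') - e_a = q(v v')$, so by induction there is a sequence $(z^i)_{i=1}^{p}$ of words of length $n-1$ with $z^1 = u$, $z^p = v v'$, and consecutive terms differing by a permitted adjacent transposition. Prepending $c_a$ to every word in this sequence gives a sequence from $c_a u = w$ to $c_a v v'$ in which consecutive words differ by an adjacent transposition in positions $l+1, l+2$ for some $l < n-1$ --- still of the permitted form, just with the index shifted by one. Concatenating this with the sequence from $c_a v v'$ to $w'$ constructed in the previous paragraph (and deleting the repeated copy of $c_a v v'$ at the join) yields the desired finite sequence from $w$ to $w'$.

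I do not expect any serious obstacle here; the argument is a standard bubble-sort-style decomposition of a permutation into adjacent transpositions, with the only mild care needed being the bookkeeping of the index bound $j_i < |w|$ and the shift of indices when prepending $c_a$. The one point worth stating carefully is that the first occurrence of $c_a$ in $w'$ is preceded only by letters distinct from $c_a$, which is what makes each of the initial swaps legitimate; this is immediate from the choice of $j$ as the \emph{first} such occurrence.
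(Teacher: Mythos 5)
Your proof is correct and is essentially the paper's own argument: both proceed by induction on $|w|$, bubbling a letter matching the first letter of the other word to the front via adjacent transpositions and then applying the inductive hypothesis to the length-$(n-1)$ tails. The only (cosmetic) difference is that you perform the bubble-sort step on $w'$ rather than on $w$, and your insistence on the \emph{first} occurrence of $c_a$ is harmless but not needed, since the lemma's permitted moves do not require the two transposed letters to be distinct.
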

\begin{proof}
The result is trivial if $|w| = 0$. Suppose $|w| \ge 1$ and the result holds for words of length
$|w| - 1$. Since $q(w) = q(w')$ there exists $j$ such that $w_j = w'_1$. Let
\begin{align*}
w^2 &= w_1 \dots w_{j-2} w_j w_{j-1} w_{j+1} \dots w_{|w|}, \\
w^3 &= w_1 \dots w_j w_{j-2} w_{j-1} w_{j+1} \dots w_{|w|}, \\
&\hskip1ex \vdots \\
w^j &= w_j w_1 \dots w_{j-2} w_{j-1} w_{j+1} \dots w_{|w|}.
\end{align*}
Let $x = w_1 \dots w_{j-2} w_{j-1} w_{j+1} \dots w_{|w|}$ and $x' = w'_2 \dots w'_{|w|}$. Then $w^j = w'_1 x$, $w' = w'_1 x'$, $q(x) = q(x')$ and $|x| = |w| - 1$. Apply the inductive hypothesis to $x$ and $x'$ to obtain $x^1, \dots, x^n$. The sequence $w^1, \dots w^j, w'_1 x^2, \dots, w'_1 x^n$ does the job.
\end{proof}

\begin{prop}\label{prp:quotient category}
Let $E$ be a $k$-coloured graph and let $\Cc$ be a complete collection of squares in $E$ which is
associative. Let $\sim$ be the equivalence relation on $E^*$ generated by~\eqref{eq:generating
equivalences}. For $x,y \in E^*$, we have $x \sim y$ if and only if $x$ and $y$ traverse the same
$\Cc$-compatible graph morphism $\lambda$. The structure maps $s([x]) := s(x)$, $r([x]) := r(x)$,
$d([x]) := q(c(x))$ and $[x][y] := [xy]$ are well-defined on $E^*{/}{\sim}$, and under these
operations $E^*{/}{\sim}$ is a $k$-graph which is isomorphic to $\Lambda_{(E,\Cc)}$.
\end{prop}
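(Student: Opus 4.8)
The plan is to establish the equivalence-class characterization first, and then read off all the structural claims about $E^*{/}{\sim}$ from it together with the results of Section~\ref{sec:k-graphs}. For the forward implication of the characterization, suppose $x \sim y$. By definition of the generating relation~\eqref{eq:generating equivalences} it suffices to treat the case where $x$ and $y$ differ by a single swap $x_ix_{i+1} \sim_\Cc y_iy_{i+1}$ at position $i$, the general case following by transitivity. Here I would factor $\lambda_x = \alpha\beta\omega$ using Corollary~\ref{cor_product} and Remark~\ref{factorisation}, with $\alpha$ traversed by $x_1\dots x_{i-1}$, $\beta$ traversed by $x_ix_{i+1}$, and $\omega$ traversed by $x_{i+2}\dots x_{|x|}$; since $x_ix_{i+1} \sim_\Cc y_iy_{i+1}$ means $y_iy_{i+1}$ also traverses the square $\beta$ (both sides of a $\sim_\Cc$ relation traverse the common square in $\Cc$), the path $y = (x_1\dots x_{i-1})(y_iy_{i+1})(x_{i+2}\dots x_{|x|})$ also traverses $\alpha\beta\omega = \lambda_x$ by Remark~\ref{rmk:composite traversals}, whence $\lambda_y = \lambda_x$ by the uniqueness in Proposition~\ref{snake_giving_C_compatible_morphism}.

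For the reverse implication, suppose $x$ and $y$ both traverse the same $\Cc$-compatible $\lambda$. Then $q(c(x)) = d(\lambda) = q(c(y))$, so Lemma~\ref{lem:shuffle} produces a sequence $w = c(x) = w^1, w^2, \dots, w^m = c(y)$ of colourings, consecutive terms differing by a single adjacent transposition. I would inductively build paths $x = z^1, z^2, \dots, z^m = y$ with $c(z^i) = w^i$ and each $z^i$ traversing $\lambda$, with $z^i \sim z^{i+1}$: given $z^i$ traversing $\lambda$ with a transposition at position $j_i$, the subpath $z^i_{j_i}z^i_{j_i+1}$ traverses a square in $\Cc$ (namely $\lambda$ restricted to the appropriate $2$-cube, which is in $\Cc$ by $\Cc$-compatibility), so the complete collection $\Cc$ supplies the unique swapped subpath $z^{i+1}_{j_i}z^{i+1}_{j_i+1}$ with $z^i_{j_i}z^i_{j_i+1} \sim_\Cc z^{i+1}_{j_i}z^{i+1}_{j_i+1}$ and correct colouring $w^{i+1}_{j_i}w^{i+1}_{j_i+1}$; the resulting $z^{i+1}$ traverses $\lambda$ by Remark~\ref{rmk:composite traversals}, and $z^i \sim z^{i+1}$ by construction. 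The final point needing care here is that $z^m$ and $y$ both traverse $\lambda$ and have the same colouring $c(y)$, hence are literally the same path: an easy induction on length using the defining formula for ``traverses''. Chaining the equivalences gives $x \sim y$.

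Once the characterization is in hand, well-definedness of the structure maps is immediate: if $x \sim y$ then $\lambda_x = \lambda_y$, so $r(x) = r(\lambda_x) = r(\lambda_y) = r(y)$, likewise for $s$, and $d([x]) = q(c(x)) = d(\lambda_x)$ depends only on the class; for the product, if $x \sim x'$ and $y \sim y'$ then $\lambda_{xy} = \lambda_{x'y'}$ since $xy$ and $x'y'$ both traverse $\lambda_x\lambda_y = \lambda_{x'}\lambda_{y'}$ by Remark~\ref{rmk:composite traversals} and Corollary~\ref{cor_product}, so $[xy] = [x'y']$. Finally I would define $\Phi : E^*{/}{\sim} \to \Lambda_{(E,\Cc)}$ by $\Phi([x]) = \lambda_x$; the characterization says exactly that $\Phi$ is well-defined and injective, Proposition~\ref{snake_giving_C_compatible_morphism} (every finite coloured-graph morphism is traversed by some path) gives surjectivity, $\Phi$ intertwines $r,s,d$ by the computations just mentioned, and $\Phi([x][y]) = \Phi([xy]) = \lambda_{xy} = \lambda_x\lambda_y = \Phi([x])\Phi([y])$ using Remark~\ref{rmk:composite traversals} and the uniqueness in Corollary~\ref{cor_product}; transporting the $k$-graph structure of $\Lambda_{(E,\Cc)}$ (Theorem~\ref{colour to rank}) across $\Phi$ shows $E^*{/}{\sim}$ is a $k$-graph and $\Phi$ an isomorphism. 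I expect the main obstacle to be the reverse implication of the characterization --- specifically, organizing the inductive shuffling argument so that at each adjacent transposition the relevant length-two subpath is genuinely seen to traverse an element of $\Cc$, and confirming that this uses only $\Cc$-compatibility of $\lambda$ together with the completeness of $\Cc$, with Lemma~\ref{lem:shuffle} handling the bookkeeping of which transpositions to apply and in what order.
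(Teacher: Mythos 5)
Your proposal is correct and follows essentially the same route as the paper's proof: the forward implication via composing the prefix, the square in $\Cc$, and the suffix using Corollary~\ref{cor_product}; the reverse implication via Lemma~\ref{lem:shuffle} together with the observation that at each transposition the length-two subpath traverses a restriction of $\lambda$ lying in $\Cc$ by $\Cc$-compatibility; and the isomorphism given by $[x] \mapsto \lambda_x$ (the paper writes down the inverse map $\lambda \mapsto [x]$, but this is the same bijection).
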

\begin{proof}
For a pair $(x,y)$ as in~\eqref{eq:generating equivalences}, we have $r(x) = r(y)$, $s(x) = s(y)$,
and $q(c(x)) = q(c(y))$, so the formulas $s([x]) := s(x)$, $r([x]) := r(x)$ and $d([x]) := q(c(x))$
are well-defined.

If $x \sim y$,  then there is a finite sequence of pairs $(x^l, x^{l+1})$, $1 \le l \le m-1$, each
of the form described in~\eqref{eq:generating equivalences} such that $x^1 = x$ and $x^m = y$. So
it suffices to fix $(x,y)$ as in~\eqref{eq:generating equivalences} and show that $x$ and $y$
traverse the same $\Cc$-compatible coloured-graph morphism. For this, let $\phi$ be the square in
$\Cc$ traversed by $x_i x_{i+1}$ and hence also by $y_i y_{i+1}$. By
Proposition~\ref{snake_giving_C_compatible_morphism}, $x_1\dots x_{i-1} = y_1\dots y_{i-1}$
traverses a unique $\Cc$-compatible morphism $\mu$ and $x_{i+2}\dots x_n = y_{i+2}\dots y_n$
traverses a unique $\Cc$-compatible morphism $\nu$. By Corollary~\ref{cor_product}, there is a
unique $\Cc$-compatible $\lambda = \mu\phi\nu$ which agrees, upon restriction, with each of $\mu$,
$\phi$ and $\nu$. Each of $x$ and $y$ traverse this $\lambda$.

Now suppose that $x$ and $y$ traverse a common $\Cc$-compatible morphism $\lambda$. Then in
particular $q(c(x)) = q(c(y))$. By Lemma~\ref{lem:shuffle} there is a finite sequence
$(w^i)^m_{i=1}$ in $\FF^+_k$ such that $w^1 = c(x)$, $w^m = c(y)$, and for each $i \le m-1$ there
exists $j_i < |x|$ such that $w^i_l = w^{i+1}_l$ for $l \not\in \{j_i, j_i+1\}$, and $w^i_{j_i} =
w^{i+1}_{j_i+1}\quad\text{ and } w^i_{j_i+1} = w^{i+1}_{j_i}$. For each $i$, let $z^i$ be the path
which traverses $\lambda$ such that $c(x^i) = w^i$, and for each $i \le m$ and $l \le |x|$, let
$p^i_l := q(c(x^i_1 \cdots x^i_l))$. Then for each $i \le m$, both $x^i_1 \dots x^i_{j_i-1}$ and
$x^{i+1}_1 \dots x^{i+1}_{j_i-1}$ traverse $\lambda\big(0, p^i_{j_i-1})\big)$, so they are equal,
and likewise $x^i_{j_i+2} \dots x^i_{|x|} = x^{i+1}_{j_i+2} \dots x^{i+1}_{|x|}$. Moreover each of
$x^i_{j_i} x^i_{j_i+1}$ and $x^{i+1}_{j_i} x^{i+1}_{j_i+1}$ traverses $\lambda|^*_{[p^i_{j_i-1},
p^i_{j_i+1}]}$, which, since $\lambda$ is $\Cc$-compatible, belongs to $\Cc$. Thus the pair $(x^i,
x^{i+1})$ is a pair of paths as in~\eqref{eq:generating equivalences}, and it follows that $x \sim
y$ as required.

By the preceding two paragraphs, the assignment $\rho : \lambda \mapsto [x]$ for any $x$ which
traverses $\lambda$ is a well-defined bijection from $\Lambda_{(E,\Cc)}$ to $E^*{/}{\sim}$ which
preserves range, source and degree. By definition of composition in $\Lambda_{(E,\Cc)}$, if $x$
traverses $\mu$ and $y$ traverses $\mu$, then $xy$ traverses $\lambda\mu$. So if $[x] = [x']$ and
$[y] = [y']$, then $x$ and $x'$ both traverse $\mu$, and $y$ and $y'$ both traverse $\nu$, so $xy$
and $x'y'$ both traverse $\mu\nu$. Thus
\[
[xy] = \rho(\mu\nu) = [x'y'],
\]
showing that the composition on $E^*/\sim$ is well-defined. So $\rho$ is a degree-preserving
bijective functor, and hence an isomorphism of $k$-graphs.
\end{proof}

We recall the $k$-graphs $\Omega_{k,m}$ described in \cite[Examples~2.2]{RSY1}. For $m \in (\NN
\cup \{\infty\})^k$, define $\Omega_{k,m}$ be the category with $\Obj(\Omega_{k,m}) = \{n \in \NN^k
: n \leq m\}$, $\Mor(\Omega_{k,m}) = \{(p,q) \in \NN^k \times \NN^k: p \leq q \le m\}$, $s(p,q) =
q$, $r(p,q) = p$ and $(p,q) (q,r) = (p,r)$. Then with $d(p,q) = q-p$, the pair $(\Omega_{k,m},d)$
is a row-finite $k$-graph. By convention, $\Omega_k = \Omega_{k, (\infty, \dots, \infty)}$. Note
that there is only one possible complete collection of squares $\Cc$ in the $k$-coloured graph
$E_{k,m}$, this collection is also associative, and the $k$-graph $\Lambda_{E_{k,m}, \Cc}$ of
Theorem~\ref{colour to rank} is isomorphic to $\Omega_{k,m}$.

\begin{rmk}\label{rmk:infinite paths}
Let $\Lambda$ be a $k$-graph. For $m \in \NN^k$, the factorisation property gives a bijection $\lambda \mapsto x_\lambda$ between $\Lambda^m$ and the set of graph morphisms from $\Omega_{k,m}$ to $\Lambda$: for $\lambda \in \Lambda$ and $p \le q \le d(\lambda)$, $x_\lambda(p,q)$ is the unique element of $\Lambda^{q-p}$ such that $\lambda = \lambda' x_{\lambda}(p,q) \lambda''$ for some $\lambda', \lambda''$. By analogy, for $m \in (\NN \cup \{\infty\})^k$, we call a $k$-graph morphism $x : \Omega_{k,m} \to \Lambda$ a \emph{path of degree $m$} in $\Lambda$, and we write $d(x)$ for $m$ and $r(x)$ for $x(0)$. We continue to denote the collection of all such paths by $\Lambda^m$. It is conventional to identify $\lambda$ with $x_\lambda$, and in particular to denote $x_{\lambda}(p,q)$ by $\lambda(p,q)$; so $\lambda = \lambda(0,p)\lambda(p,q)\lambda(q, d(\lambda))$ whenever $0 \le p \le q \le d(\lambda)$.
\end{rmk}

We shall write $W_\Lambda$ for the \emph{path space} $W_\Lambda := \bigcup_{m \in (\NN \cup
\{\infty\})^k} \Lambda^m$ of $\Lambda$.

\begin{prop}[{cf. \cite[Remarks~2.2]{KP2000}}]\label{prp:extended snake}
Let $E$ be a $k$-coloured graph and let $\Cc$ be a complete collection of squares in $E$ which is
associative. The map $x \mapsto \lambda_x$ from $E^*$ to $\Lambda = \Lambda_{(E,\Cc)}$ of
Proposition~\ref{snake_giving_C_compatible_morphism} extends uniquely to a degree-preserving map
$\pi : W_E \to W_\Lambda$ such that for $x \in W_E$ and $i \in \NN$ with $i \le |x|$,
$\pi(x)(0,d(x_1\dots x_i)) = \lambda_{x_1\dots x_i}$. Moreover, $\pi$ is surjective.
\end{prop}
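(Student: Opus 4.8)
The plan is to build $\pi$ on $W_E = E^* \cup E^\infty$ by treating finite and infinite paths separately, then to check degree-preservation, the stated compatibility formula, uniqueness, and surjectivity. For $x \in E^*$ we have already defined $\pi(x) := \lambda_x$ via Proposition~\ref{snake_giving_C_compatible_morphism}; this is a coloured-graph morphism $E_{k,d(x)} \to E$, which under Theorem~\ref{colour to rank} (identifying $\Lambda_{(E,\Cc)}$ with a $k$-graph) is a path of finite degree, and it visibly satisfies $\pi(x)(0,d(x_1\dots x_i)) = \lambda_{x_1\dots x_i}$ by the uniqueness in Proposition~\ref{snake_giving_C_compatible_morphism} applied to the prefix $x_1\dots x_i$, which traverses $\lambda_x|_{E_{k, d(x_1\dots x_i)}}$.

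For $x \in E^\infty$, the idea is to assemble a path of infinite degree from the finite approximants. Write $x = x_1 x_2 \cdots$ and set $\lambda^{(i)} := \lambda_{x_1\cdots x_i}$. Remark~\ref{rmk:composite traversals} and the uniqueness in Proposition~\ref{snake_giving_C_compatible_morphism} show that $\lambda^{(i+1)}|_{E_{k, d(x_1\cdots x_i)}} = \lambda^{(i)}$, i.e.\ these coloured-graph morphisms are nested. Since the degrees $d(x_1\cdots x_i)$ are nondecreasing in $\NN^k$, their supremum is some $p \in (\NN \cup \{\infty\})^k$; one has to note that $E^0_{k,p} = \bigcup_i E^0_{k, d(x_1\cdots x_i)}$ and similarly for edges, so the $\lambda^{(i)}$ glue to a well-defined coloured-graph morphism $\lambda_x : E_{k,p} \to E$, which is $\Cc$-compatible because every square occurring in it occurs in some $\lambda^{(i)}$. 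Under the identification of coloured-graph morphisms of non-finite degree with $k$-graph paths (Remark~\ref{rmk:infinite paths} together with Theorem~\ref{colour to rank}), this is the desired $\pi(x) \in W_\Lambda$, of degree $q(c(x)) = p$, and the compatibility formula holds by construction. Uniqueness of the extension is immediate: any degree-preserving $\pi'$ satisfying the prefix formula must agree with $\lambda_x$ on each $E_{k, d(x_1\cdots x_i)}$, hence everywhere on $E_{k,p}$.

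For surjectivity, fix $y \in W_\Lambda$, say $d(y) = m \in (\NN \cup \{\infty\})^k$. Choose any decomposition of $m$ into a (possibly infinite) sum of generators $e_{i_1} + e_{i_2} + \cdots$ exhausting each coordinate, and use the factorisation property to write $y$ accordingly as a concatenation $y = y_1 y_2 \cdots$ with each $d(y_\ell) \in \{e_1,\dots,e_k\}$; under the identification with coloured-graph morphisms, each $\sk{y_\ell}$ (in the notation of Definition~\ref{dfn:E_Lambda def}) is an edge $f_\ell \in E^1$ with the appropriate colour, and $s(f_\ell) = r(f_{\ell+1})$. Then $x := f_1 f_2 \cdots \in W_E$ traverses $y$ by the observation recorded just before Remark~\ref{rmk:composite traversals} (extended to the infinite case via the definition of traversal for infinite coloured-graph morphisms), so $\pi(x) = \lambda_x = y$ by the uniqueness clause of Proposition~\ref{snake_giving_C_compatible_morphism} (finite case) respectively by the gluing construction (infinite case). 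Hence $\pi$ is surjective.

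I expect the main obstacle to be purely bookkeeping: verifying carefully that the nested family $\{\lambda^{(i)}\}$ genuinely determines a single coloured-graph morphism on $E_{k,p}$ — in particular that the index set $\{n : 0 \le n \le p\}$ and its edge set are exactly the increasing union of the finite pieces — and then matching this against whichever model one uses for $\Lambda$-paths of infinite degree (coloured-graph morphisms of non-finite degree versus $k$-graph morphisms out of $\Omega_{k,p}$ via Remark~\ref{rmk:infinite paths}). Neither step is deep; the compatibility formula, degree-preservation, and uniqueness all fall out of the uniqueness assertion in Proposition~\ref{snake_giving_C_compatible_morphism} once the gluing is set up correctly.
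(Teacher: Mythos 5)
Your proposal is correct and follows essentially the same route as the paper: both arguments rest on the coherence $\lambda_{x_1\cdots x_{i+1}}|_{E_{k,d(x_1\cdots x_i)}} = \lambda_{x_1\cdots x_i}$ supplied by the uniqueness clause of Proposition~\ref{snake_giving_C_compatible_morphism}, and the surjectivity arguments (factor $y$ into edge-degree pieces along a sequence exhausting $d(y)$ and concatenate) are identical. The only difference is packaging: the paper bypasses your intermediate glued coloured-graph morphism on $E_{k,p}$ by defining $\pi(x)(m,n) := \lambda_{x_1\dots x_j}|^*_{E_{k,[m,n]}}$ directly as a functor on $\Omega_{k,d(x)}$, which spares it the (routine but unstated) identification of infinite-degree coloured-graph morphisms with elements of $W_\Lambda$ that your version needs.
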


\begin{rmk}
We have used the same symbol $\pi$ both for the map from $W_E$ to $W_\Lambda$ of
Proposition~\ref{prp:extended snake}, and for the map from $E_\Lambda$ to $\Lambda$ of
Definition~\ref{dfn:E_Lambda def}. This notation is consistent because Theorem~\ref{colour to rank}
yields a coloured-graph isomorphism $E \cong E_\Lambda$ which carries elements of $\Cc$ to elements
of $\Cc_\Lambda$.
\end{rmk}

\begin{proof}[Proof of Proposition~\ref{prp:extended snake}]
For $x \in W_E$ and $m \le n \le d(x)$, let $j$ be the least element of $\NN$ such that $d(x_1\dots
x_j) \ge n$, and define $\pi(x)(m,n) := \lambda_{x_1\dots x_j}|^*_{E_{k, [m,n]}}$.
Proposition~\ref{snake_giving_C_compatible_morphism} implies that for $j \le l$, we have
$\lambda_{x_1\dots x_l}|_{E_{k, d(x_1 \dots x_j)}} = \lambda_{x_1\dots x_j}$. Hence
$\pi(x)(0,d(x_1\dots x_j)) = \lambda_{x_1\dots x_j}$ for all $j \le |x|$. The factorisation
property in $\Lambda$ implies that $\pi(x)$ is a $k$-graph morphism from $\Omega_{k, d(x)}$ to
$\Lambda$. For uniqueness of $\pi$, observe that by uniqueness of factorisations in $\Lambda$, any
$y \in W_\Lambda$ such that $y(x_1\dots x_i) = \lambda_{x_1 \dots x_i}$ for all $i \le d(x)$ must
satisfy $y(m,n) = \lambda_{x_1\dots x_j}|^*_{E_{k, [m,n]}}$ whenever $d(x_1\dots x_j) \ge n$.

To see that $\pi$ is surjective first note that if $\lambda \in \Lambda$ then any path $x$ which
traverses $\lambda$ satisfies $\pi(x) = \lambda$. So fix $y \in W_\Lambda \setminus \Lambda$. Fix a
sequence $(m_j)^\infty_{j=0}$ such that $m_0 = 0$, $m_{j+1} - m_j \in \{e_1, \dots, e_k\}$ for all
$j$ and $\bigvee_{j \in \NN} m_j = d(y)$. For each $j \in \NN$ define $x_j := y(m_{j-1}, m_j) \in
E^1$. Then $x = x_1 x_2 \dots \in W_E$, and $\pi(x)(m,n) = y(m,n)$ for all $m,n$ by uniqueness of
factorisations in $\Lambda$, so $\pi(x) = y$.
\end{proof}

If $\pi : W_E \to W_\Lambda$ is the surjection of Proposition~\ref{prp:extended snake}, then
Proposition~\ref{cor_product} implies that $\pi(x)\pi(y) = \pi(xy)$ when $x$ and $y$ are finite
with $r(y) = s(x)$.

Now let $\Lambda$ be a row-finite $k$-graph with no sources. Recall that $\Lambda^\infty$ is the collection of $k$-graph morphisms from $\Omega_k$ to $\Lambda$, and $\Ekinfty$ is the collection of infinite paths in $E$ which contain infinitely many edges of each colour. Since $\pi$ is surjective, it follows that $\Lambda^\infty = \pi(\Ekinfty)$.

\begin{rmk}
Let $E$ be a $k$-coloured graph and let $\Cc$ be a complete collection of squares in $E$ which is
associative. Let $\Lambda = \Lambda_{(E,\Cc)}$ be the corresponding $k$-graph as in
Theorem~\ref{colour to rank}. Identify $\Lambda^0$ with $E^0$. Then for each $v \in \Lambda^0$ and
$i \le k$, we have $|v\Lambda^{e_i}| = |\{e \in E^1 : r(e) = v\text{ and } c(e) = c_i\}|$. Hence
$\Lambda$ is row-finite and has no sources if and only if $0 < |\{e \in E^1 : r(e) = v\text{ and }
c(e) = c_i\}| < \infty$ for all $v \in E^0$ and $i \le k$.
\end{rmk}

\begin{prop}\label{rf q cts}
Let $E$ be a $k$-coloured graph and let $\Cc$ be a complete collection of squares in $E$ which is
associative. Let $\Lambda = \Lambda_{(E,\Cc)}$ as in Theorem~\ref{colour to rank}. Suppose that
$\Lambda$ is row-finite and has no sources. Let $\pi : \Ekinfty \to \Lambda^\infty$ be the
restriction of the surjection of Proposition~\ref{prp:extended snake}. Then $U \subseteq
\Lambda^\infty$ is open if and only if $\pi^{-1}(U) \subseteq \Ekinfty$ is open.
\end{prop}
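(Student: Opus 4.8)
The plan is to show that $\pi : \Ekinfty \to \Lambda^\infty$ is a continuous, open surjection; these three properties together give the statement that $U$ is open if and only if $\pi^{-1}(U)$ is open. Surjectivity is already in hand from Proposition~\ref{prp:extended snake} (and the remark immediately preceding this proposition). So there are two things to prove: $\pi$ is continuous, and $\pi$ is an open map. For both, the natural strategy is to work with the respective bases of cylinder sets: the sets $\Zz(\mu) = \{z \in \Lambda^\infty : z(0,d(\mu)) = \mu\}$ for $\mu \in \Lambda$ on the target side, and the analogous cylinder sets $Z_E(x) := \{w \in \Ekinfty : w_1 \cdots w_{|x|} = x\}$ for $x \in E^*$ on the source side. (I should check that the latter really do form a basis for the subspace topology on $\Ekinfty \subseteq E^\infty$; this is routine since $\Ekinfty$ is a subspace and the $Z_E(x)$ are the traces of the standard cylinders of $E^\infty$.)

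\textbf{Continuity.} First I would fix $\mu \in \Lambda$ and compute $\pi^{-1}(\Zz(\mu))$. A $k$-infinite path $w \in \Ekinfty$ lies in $\pi^{-1}(\Zz(\mu))$ exactly when $\pi(w)(0, d(\mu)) = \mu$. By the defining property of $\pi$ in Proposition~\ref{prp:extended snake}, $\pi(w)(0, d(w_1\cdots w_i)) = \lambda_{w_1\cdots w_i}$, and more generally $\pi(w)(m,n) = \lambda_{w_1\cdots w_j}|^*_{E_{k,[m,n]}}$ for $j$ large enough that $d(w_1\cdots w_j) \ge n$. So $\pi(w)(0,d(\mu)) = \mu$ holds precisely when there is some finite initial segment $w_1 \cdots w_j$ of $w$ (any one long enough that $d(w_1\cdots w_j) \ge d(\mu)$) with $\lambda_{w_1\cdots w_j}|_{E_{k,d(\mu)}} = \mu$. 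Equivalently, writing $x$ for a path that traverses $\mu$ (one exists by the discussion before Remark~\ref{rmk:composite traversals}), this says $w$ extends some path $x'$ with $x' \sim x$, i.e. $w \in \bigcup_{x' \sim x} Z_E(x')$. Since $\sim$-classes of paths traversing a fixed $\mu$ are exactly the paths traversing $\mu$ by Proposition~\ref{prp:quotient category}, this is a union of basic open cylinders in $\Ekinfty$, hence open. Therefore $\pi$ is continuous.

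\textbf{Openness.} This is the step I expect to be the main obstacle, because it is where row-finiteness and the no-sources hypothesis actually get used. I would fix a basic open set $Z_E(x) \subseteq \Ekinfty$, $x \in E^*$, and show $\pi(Z_E(x))$ is open in $\Lambda^\infty$. Let $\mu = \lambda_x \in \Lambda$. The obvious guess is $\pi(Z_E(x)) = \Zz(\mu)$: the inclusion $\subseteq$ is immediate since $w \in Z_E(x)$ means $x$ is an initial segment of $w$, so $\pi(w)(0,d(\mu)) = \lambda_x = \mu$. For the reverse inclusion, given $z \in \Zz(\mu)$ I must produce $w \in \Ekinfty$ with $w_1\cdots w_{|x|} = x$ (not merely some $\sim$-equivalent path) and $\pi(w) = z$. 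Using surjectivity of $\pi$ as in the proof of Proposition~\ref{prp:extended snake}, I would build $w$ by choosing a sequence $m_0 = 0 \le m_1 \le \cdots$ with $m_{j+1} - m_j \in \{e_1,\dots,e_k\}$ and $\bigvee_j m_j = d(z) = (\infty,\dots,\infty)$, and setting $w_j := z(m_{j-1},m_j) \in E^1$; the key extra requirement is to choose the early $m_j$'s so that the resulting initial edge-word is \emph{exactly} $x$ rather than a shuffle of it — this is possible because $x$ traverses $\mu = z(0,d(\mu))$, so taking $m_1,\dots,m_{|x|}$ to follow the shape sequence of $x$ makes $w_1\cdots w_{|x|}$ the path traversing $z(0,d(\mu))$ in the colour-order of $x$, which is $x$ itself. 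Then $\pi(w) = z$ by uniqueness of factorisations, and $w \in \Ekinfty$ provided I can always extend so that infinitely many edges of each colour appear — this is exactly where row-finiteness with no sources is needed (and is where the non-row-finite counterexample promised in the introduction bites): with no sources each $m_j$ can be increased in every coordinate infinitely often, so one arranges $\bigvee m_j = (\infty,\dots,\infty)$, forcing each colour to occur infinitely often. Hence $\pi(Z_E(x)) = \Zz(\mu)$ is open.

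\textbf{Conclusion.} With $\pi$ a continuous open surjection, for $U \subseteq \Lambda^\infty$ the set $\pi^{-1}(U)$ open implies $U = \pi(\pi^{-1}(U))$ open (surjectivity plus openness), and $U$ open implies $\pi^{-1}(U)$ open (continuity). The only genuinely delicate point is the construction in the openness step of an infinite path in $\Ekinfty$ with a \emph{prescribed} finite initial edge-segment projecting onto a given $z \in \Zz(\lambda_x)$, and verifying that the no-sources hypothesis lets us keep extending while hitting every colour infinitely often.
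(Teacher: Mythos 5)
Your overall strategy --- show $\pi$ is a continuous, open surjection and deduce the equivalence --- is the same as the paper's, just packaged differently: the paper's first paragraph is your continuity argument run pointwise, and its second paragraph is precisely your claim that $\pi$ maps the cylinder in $\Ekinfty$ determined by a finite path $y_x \in E^*$ onto $\Zz(\pi(y_x))$. Your openness step is sound: the lift that follows the shape sequence of $x$ for the first $|x|$ steps and then lets the $m_j$ increase in every coordinate does produce $w \in \Ekinfty$ with $w_1\cdots w_{|x|}=x$ and $\pi(w)=z$ (the paper instead concatenates $y_x$ with a traversal of $\mu'$; both work).

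The gap is in the continuity step. Your first characterisation of $\pi^{-1}(\Zz(\mu))$ --- those $w$ with $\lambda_{w_1\cdots w_j}|_{E_{k,d(\mu)}}=\mu$ for $j$ large enough that $d(w_1\cdots w_j)\ge d(\mu)$ --- is correct, but the subsequent ``equivalently, $w$ extends some $x'\sim_\Cc x$'' is false: $\sim_\Cc$ preserves the degree of a whole path but not of its prefixes, so an element of $\pi^{-1}(\Zz(\mu))$ need not have \emph{any} prefix of degree exactly $d(\mu)$. For instance, with $k=2$ and $d(\mu)=e_1+e_2$, a path $w$ whose colour word begins $c_1c_1c_2$ has prefixes of degrees $e_1$, $2e_1$, $2e_1+e_2,\dots$, never $e_1+e_2$, yet $\pi(w)(0,e_1+e_2)$ may equal $\mu$; such a $w$ lies in $\pi^{-1}(\Zz(\mu))$ but in no $Z_E(x')$ with $x'\sim_\Cc x$, so your union is a proper subset and exhibiting \emph{it} as open does not show $\pi^{-1}(\Zz(\mu))$ is open. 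The repair is one line and is effectively what the paper does: $\pi^{-1}(\Zz(\mu))=\bigcup\{Z_E(y): y\in E^*,\ d(y)\ge d(\mu),\ \lambda_y|_{E_{k,d(\mu)}}=\mu\}$, a union of basic cylinders. A smaller point: row-finiteness is not what lets you extend $w$ so that every colour recurs --- every $z\in\Lambda^\infty$ has degree $(\infty,\dots,\infty)$ by definition, so any choice of $m_j$ with $\bigvee_j m_j = d(z)$ works; the hypotheses matter rather for $\Lambda^\infty$ to be the right space to work with, and the promised counterexample in the non-row-finite case concerns the boundary-path space, not $\Lambda^\infty$.
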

\begin{proof}
First suppose that $U$ is open in $\Lambda^\infty$, and fix $x \in \pi^{-1}(U)$. We seek a basic
open set $B_x$ in $\Ekinfty$ such that $x \in B_x \subset \pi^{-1}(U)$. Since $U$ is open, there
exists $\mu \in \Lambda$ such that $\pi(x) \in \Zz(\mu) \subset U$. Fix $n \in \NN$ such that
$q(c(x_1 \dots x_n)) > d(\mu)$. Then $\pi(x_1 \dots x_n) \in \Zz(\mu)$. Let $y_x = x_1 \dots x_n$.
Then $x \in \Zz(y_x)$. To see $\Zz(y_x) \subset \pi^{-1}(U)$, fix $y \in \Zz(y_x)$; say $y = y_x
y'$. Then $\pi(y) = \pi(x_1 \dots x_n y') = \pi(x_1\dots x_n)\pi(y')\in \Zz(\mu) \subset U$, so $y
\in \pi^{-1}(U)$ as required.

For the reverse implication, suppose that $\pi^{-1}(U)$ is open in $\Ekinfty$, and fix $\lambda \in
U$. We seek a basic open set $B_\lambda$ such that $\lambda \in B_\lambda \subset U$. Fix $x \in
E^\infty$ which traverses $\lambda$. Then $x \in \Ekinfty$, and $x \in \pi^{-1}(U)$ which is open.
Hence there exists a basic open set $B_x \in \Ekinfty$ such that $x \in B_x \subset \pi^{-1}(U)$.
So $B_x = \Zz(y_x)$ for some $y_x \in E^*$, and
\[
    \lambda = \pi(x) = \pi(y_x x') = \pi(y_x) \pi(x') \in \Zz(\pi(y_x)).
\]
To see that $\Zz(\pi(y_x)) \subset U$, let $\mu \in \Zz(\pi(y_x))$. Write $\mu = \pi(y_x)\mu'$, and
let $x_{\mu'}$ be a path in $E^*$ which traverses $\mu'$. Then $y_x x_{\mu'} \in \Zz(y_x) \subset
\pi^{-1}(U)$, which implies that $\mu = \pi(y_x x_{\mu'}) \in U$.
\end{proof}

Proposition~\ref{rf q cts} implies that when $E$ is row-finite, the topology on $\Lambda^\infty$ is
the quotient topology inherited from $\Ekinfty$ under $\pi$. In particular, $\pi$ is continuous.

As mentioned in the opening of this section, when $\Lambda$ is row-finite with no sources,
$\Lambda^\infty$ is homeomorphic to the unit space of the groupoid $\Gg_\Lambda$ of \cite{KP2000};
it is also homeomorphic to the spectrum of the commutative subalgebra of $C^*(\Lambda)$ spanned by
the projections $s_\lambda s^*_\lambda$ (see \cite{pp_Webster2011} and the opening of
Section~\ref{sec:simple}). If $\Lambda$ is not row-finite, this is no longer the case:
$\Lambda^\infty$ need not be locally compact. To see this, suppose that $\Lambda$ is the $1$-graph
with one vertex and infinitely many edges $\{f_i : i \in \NN\}$. Then given any $x \in
\Lambda^\infty$, any neighbourhood of $x$ contains $\Zz(x_1\dots x_n)$ for some $n$, and the cover
$\Zz(x_1\dots x_n) = \bigcup_{i=1}^\infty \Zz(x_1\dots x_n f_i)$ has no finite subcover.

Instead, given a finitely aligned $k$-graph, let
\[\begin{split}
    \Lambda^{\le \infty} := \{x \in W_\Lambda :&\text{ there exists } n \le d(x)\text{ such that } \\
        &(n \le p \le d(x)\text{ and }p_i = d(x)_i)\text{ implies } x(p)\Lambda^{e_i} = \emptyset\}
\end{split}\]
as in \cite{RSY2004}. Endow $W_\Lambda$ with the topology with basic open sets $\Zz(\mu \setminus
G) := \Zz(\mu) \setminus \Big(\bigcup_{\lambda \in G} \Zz(\mu\lambda)\Big)$, where $\mu$ ranges
over $\Lambda$ and $G$ ranges over all finite subsets of $s(\mu)\Lambda$. Then the unit space of
$\partial\Lambda$ of the groupoid $\Gg_\Lambda$ constructed in \cite{FMY2005} is the closure of
$\Lambda^{\le \infty}$ in $W_\Lambda$; this is also homeomorphic to the spectrum of
$\clsp\{s_\lambda s^*_\lambda : \lambda \in \Lambda\}$ \cite{pp_Webster2011}. So the natural
question to ask for finitely aligned $k$-graphs is whether the quotient topology on
$\partial\Lambda$ is the same as its standard topology. The next example shows that it is not.

\begin{example}
Let $E$ be the $2$-coloured graph pictured below.
\[\begin{tikzpicture}[xscale=0.75,yscale=0.5,>=latex,semithick]\label{2graph ex}
    \node (p) at (3,0) {$p$};
    \node (v) at (0,0) {$v$};
    \node (w) at (0,3) {$w$}
        edge[->,dashed, red] node[black,auto] {$f$}(v);
    \node (q) at (3,3) {$q$}
        edge[->,dashed, red] node[black,auto] {$g$} (p);
    \draw [->, blue]   (q) to [out=160,in=20] node[black,auto,swap] {$\beta_i$} (w);
    \draw [->, blue]   (q) to [out=170,in=10] node[black,yshift=-5pt] {$\vdots$} (w);
    \draw [->, blue]   (p) to [out=-160,in=-20] node[black,auto] {$\alpha_i$} (v);
    \draw [->, blue]   (p) to [out=-170,in=-10] node[black,auto,swap] {$\vdots$} (v);
\end{tikzpicture}
\]

Let $\Cc$ be the collection of graph morphisms $\lambda_i:E_{2,(1,1)} \to E$ such that $\alpha_i g$
and $f \beta_i$ both traverse $\lambda_i$ for each $i$. This is a complete collection of squares in
$E$. Since $E$ has only two colours $\Cc$ is associative. Let $\Lambda$ be the $2$-graph
constructed from $(E,\Cc)$ as in Theorem~\ref{colour to rank}, and let $\pi : W_E \to W_\Lambda$ be
the surjection of Proposition~\ref{prp:extended snake}. Then $v\Lambda^{\le \infty} =
\Lambda^{(1,1)} = \{\lambda_i : i \in \NN\}$ and $\pi(\alpha_i g) = \lambda_i = \pi(f \beta_i)$ for
all $i$.

We claim that $\alpha_i g \to v$ in $W_E$ but that $\lambda_i \to f$ in $W_\Lambda$. To see that
$\alpha_i g \to v$ in $W_E$, fix a basic open set $\Zz(y \setminus F) \subset E^*$ containing $v$.
Then $y = v$. Since $F$ is finite, there are only finitely many $i$ such that either $\alpha_i$ or
$\alpha_i g$ belongs to $F$. Let $N_0:=\max\{ i : \alpha_i \in F\text{ or } \alpha_i g \in F\}$.
Then $\alpha_n g \in \Zz(v \setminus F)$ for all $n \ge N_0$, whence $\alpha_i g \to v$ as $i \to
\infty$.

To see that $\lambda_i \to \pi(f)$ in $W_\Lambda$, fix a basic open set $\Zz(\mu \setminus G)
\subset \Lambda$ containing $f$. Then either $\mu = \pi(f)$ or $\mu = v$. We show that $\lambda_i
\in \Zz(\mu \setminus G)$ for large $i$. First suppose that $\mu = \pi(f)$. Then $G$ is a finite
collection of paths of the form $\pi(\beta_i \nu)$. Let $N_1 = \max\{i : \pi(\beta_i \nu) \in
G\text{ for some }\nu\}$. Then $\lambda_n = \pi(f \beta_n) \in \Zz(f \setminus G)$ for all $n \geq
N_1$. Now suppose that $\mu = v$. Since $G$ does not contain $\pi(f)$, it is a finite subset of
$\{\pi(\alpha_i), \lambda_i : i \in \NN\}$. Let $N_2 = \max\{i: \pi(\alpha_i) \in G\text{ or }
\lambda_i \in G\}$. Then $\lambda_n \in \Zz(v \setminus G)$ for all $n \ge N_2$. Hence $\lambda_i
\to f$ as $i \to \infty$.

We now have $\pi(\lim \alpha_i g) = \pi(v) \ne \pi(f) = \lim \lambda_i = \lim \pi(\alpha_i g)$, so
$\pi$ is not continuous.
\end{example}

\section{Simplicity of \texorpdfstring{$C^*$}{C*}-algebras of higher-rank graphs}\label{sec:simple}

Suppose that $\Lambda$ is a $k$-graph which is row-finite and has no sources. For such $\Lambda$, a
\emph{Cuntz-Krieger $\Lambda$-family} in a $C^*$-algebra $B$ consists of partial isometries
$\{t_\lambda:\lambda\in\Lambda\}$ satisfying the \emph{Cuntz-Krieger relations} \cite{KP2000}:
\begin{enumerate}
\renewcommand{\theenumi}{CK\arabic{enumi}}
\item $\{t_v:v\in\Lambda^0\}$ are mutually orthogonal projections;\label{ck1}
\item $t_{\lambda\mu}=t_\lambda t_\mu$ for all $\lambda,\mu\in\Lambda$ with
    $s(\lambda)=r(\mu)$;\label{ck2}
\item $t_\lambda^*t_\lambda=t_{s(\lambda)}$ for all $\lambda \in \Lambda$; and\label{ck3}
\item $t_v=\sum_{\lambda\in v\Lambda^m}t_\lambda t_\lambda^*$ for all $v\in\Lambda^0$ and
    $m\in\NN^k$.\label{ck4}
\end{enumerate}
The graph $C^*$-algebra $C^*(\Lambda)$ is the $C^*$-algebra generated by a universal Cuntz-Krieger
$\Lambda$-family $\{s_\lambda:\lambda\in\Lambda\}$; it follows from \cite[Proposition~2.11]{KP2000}
that each vertex projection $s_v$ is nonzero.

As in \cite{RS2007}, we say that $\Lambda$ is \emph{aperiodic} if for every vertex $v\in\Lambda^0$
 and each pair $m\ne n\in\NN^k$ there is a path $\lambda\in v\Lambda$ such that $d(\lambda)\geqslant m\vee n$ and
\[
\lambda(m,m+d(\lambda)-(m\vee n))\ne \lambda(n,n+d(\lambda)-(m\vee n)).
\]
Lemma~3.2 of \cite{RS2007} implies that this formulation of aperiodicity in terms of finite paths
is equivalent to the aperiodicity condition used in \cite{KP2000}. So the next theorem follows from
\cite[Theorem~4.6]{KP2000}.

\begin{theorem}[The Cuntz-Krieger uniqueness theorem]\label{thm_Cuntz_Krieger_Uniqueness}
Let $\Lambda$ be a row-finite, aperiodic $k$-graph with no sources. Suppose that
$\{t_\lambda:\lambda\in\Lambda\}$ is a Cuntz-Krieger $\Lambda$-family, and let $\pi$ be the
homomorphism of $C^*(\Lambda)$ such that $\pi(s_\lambda)=t_\lambda$ for all $\lambda \in \Lambda$.
If each $t_v$ is nonzero, then $\pi$ is faithful.
\end{theorem}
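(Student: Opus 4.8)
The plan is to prove directly that $\pi$ is injective; since an injective $*$-homomorphism is automatically isometric, it suffices to show $\ker\pi=0$. Write $I:=\ker\pi$ and suppose, for contradiction, that $I\neq 0$. The structural input is the gauge action: by the universal property of $C^*(\Lambda)$ there is a strongly continuous action $\gamma$ of $\TT^k$ with $\gamma_z(s_\lambda)=z^{d(\lambda)}s_\lambda$ (writing $z^m:=z_1^{m_1}\cdots z_k^{m_k}$), and averaging it produces a faithful conditional expectation $\Phi:=\int_{\TT^k}\gamma_z\,dz$ onto the fixed-point algebra $\Ff:=\clsp\{s_\mu s_\nu^*:d(\mu)=d(\nu)\}$, which satisfies $\Phi(s_\mu s_\nu^*)=s_\mu s_\nu^*$ if $d(\mu)=d(\nu)$ and $\Phi(s_\mu s_\nu^*)=0$ otherwise. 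For $n\in\NN^k$ the Cuntz--Krieger relations turn $\{s_\mu s_\nu^*:d(\mu)=d(\nu)=n\}$ into matrix units, so $\Ff_n:=\lsp\{s_\mu s_\nu^*:d(\mu)=d(\nu)=n\}\cong\bigoplus_{v\in\Lambda^0}M_{|\Lambda^n v|}(\CC)$. Finally, since each $t_v=\pi(s_v)$ is a \emph{nonzero} projection, $\|\pi(c\,s_v)\|=|c|$ for every scalar $c$; the contradiction will come from exhibiting an element of $I$ lying close to such a $c\,s_v$.

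Choose $0\neq a\in I$ with $a\ge 0$. As $\Phi$ is faithful, $\Phi(a)\neq 0$, and after rescaling I may assume $\|\Phi(a)\|=1$. Fix $\delta\in(0,\tfrac{1}{3})$ and pick $b$ in the dense $*$-algebra $\lsp\{s_\mu s_\nu^*\}$ with $\|a-b\|<\delta$; then $\|\Phi(b)\|>1-\delta$. Let $N\in\NN^k$ dominate the degrees of all paths occurring in $b$, so $\Phi(b)\in\Ff_N\cong\bigoplus_v M_{|\Lambda^N v|}(\CC)$; write $\Phi(b)_w$ for its component in the summand $M_{|\Lambda^N w|}(\CC)$ and choose $w$ with $\|\Phi(b)_w\|>1-2\delta$. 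Let $(c_\mu)_{\mu\in\Lambda^N w}$ be a unit eigenvector of the self-adjoint matrix $\Phi(b)_w$ for an extreme eigenvalue $\lambda_w$ (so $|\lambda_w|=\|\Phi(b)_w\|>1-2\delta$) and put $\xi:=\sum_{\mu\in\Lambda^N w}c_\mu s_\mu$. Then $\xi^*\xi=s_w$ and $\xi^*\Phi(b)\xi=\lambda_w s_w$, and a short computation with the Cuntz--Krieger relations shows that the compression $b':=\xi^* b\xi$ has the form
\[
    b'=\lambda_w\,s_w+\sum_i d_i\,s_{\sigma_i}s_{\kappa_i}^*,
\]
a finite sum in which $r(\sigma_i)=r(\kappa_i)=w$ and $d(\sigma_i)\neq d(\kappa_i)$ for every $i$ (the degree inequality is forced by the arithmetic of the expansion, consistently with $\Phi$ killing the off-diagonal part, so that $\Phi(b')=\lambda_w s_w$).

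Now I would invoke aperiodicity in the form of Lemma~\ref{lem_there_exists_lambda}: applied at the vertex $w$ to the finitely many distinct degree-pairs $(d(\sigma_i),d(\kappa_i))$, it yields a single path $\lambda\in w\Lambda$ with $s_\lambda^* s_{\sigma_i}s_{\kappa_i}^* s_\lambda=0$ for all $i$, whence $s_\lambda^* b' s_\lambda=\lambda_w\,s_\lambda^* s_w s_\lambda=\lambda_w\,s_{s(\lambda)}$. Set $x:=s_\lambda^*\xi^* a\xi s_\lambda$. Since $I$ is an ideal and $\|\xi\|=\|s_\lambda\|=1$, we have $x\in I$ and
\[
    \|x-\lambda_w\,s_{s(\lambda)}\|=\|s_\lambda^*\xi^*(a-b)\xi s_\lambda\|\le\|a-b\|<\delta.
\]
Applying $\pi$ and using $\pi(x)=0$ gives $\|\lambda_w\,t_{s(\lambda)}\|<\delta$; but $t_{s(\lambda)}$ is a nonzero projection, so $\|\lambda_w\,t_{s(\lambda)}\|=|\lambda_w|>1-2\delta$, forcing $1-2\delta<\delta$, which is impossible since $\delta<\tfrac{1}{3}$. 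Therefore $\ker\pi=0$ and $\pi$ is faithful.

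I expect the substance of the proof to lie entirely in the aperiodicity step, that is, in Lemma~\ref{lem_there_exists_lambda}. Two things have to be accomplished there: first, to upgrade the defining aperiodicity property — which for a single pair $m\neq n$ produces a path detecting the difference of the two shifts — to a single path $\lambda$ handling finitely many pairs at once, a combinatorial argument in the path category of the skeleton; and second, to verify in the higher-rank setting that such a $\lambda$ genuinely forces $s_\lambda^* s_\sigma s_\kappa^* s_\lambda=0$, which requires tracking the minimal common-extension sets $\Lmin(\cdot,\cdot)$ and coping with incomparable degrees. The other ingredients — existence and faithfulness of $\Phi$, the matrix-unit description of $\Ff_n$, and the norm estimates above — are routine.
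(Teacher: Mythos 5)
Your global architecture --- gauge action, faithful expectation $\Phi$, approximation of a positive $a\in\ker\pi$ by $b$, compression to $\lambda_w s_w$ plus off-diagonal terms, and a contradiction with $t_v\ne 0$ --- is the classical direct argument and is sound in outline; it differs from the paper's route, which instead uses Lemma~\ref{lem_there_exists_lambda} to prove that $\sum a_{\mu\nu}t_\mu t_\nu^*\mapsto\sum_{d(\mu)=d(\nu)}a_{\mu\nu}t_\mu t_\nu^*$ is norm-decreasing (Proposition~\ref{prp:norm-decreasing}), hence defines an expectation $\Psi$ on $\pi(C^*(\Lambda))$ with $\pi\circ\Phi=\Psi\circ\pi$, and then concludes from faithfulness of $\Phi$. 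The genuine gap is in your aperiodicity step. Lemma~\ref{lem_there_exists_lambda}, applied at $w$, separates pairs $\alpha\ne\beta\in\Lambda w$ --- paths with \emph{source} $w$ --- after right-extension by $\lambda$: its conclusion is $(\alpha\lambda)(0,d(\lambda))\ne(\beta\lambda)(0,d(\lambda))$. Your $\sigma_i,\kappa_i$ have \emph{range} $w$, and tracking $\Lmin$ shows that $s_\lambda^*s_{\sigma_i}s_{\kappa_i}^*s_\lambda\ne 0$ forces $\lambda(0,m)=\sigma_i$, $\lambda(0,n)=\kappa_i$ and $\lambda(m,m+d(\lambda)-(m\vee n))=\lambda(n,n+d(\lambda)-(m\vee n))$ for $(m,n)=(d(\sigma_i),d(\kappa_i))$; so what you need is a single $\lambda\in w\Lambda$ witnessing the aperiodicity inequality \emph{intrinsically and simultaneously} for all the pairs $(d(\sigma_i),d(\kappa_i))$. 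The lemma does not assert this and its conclusion does not imply it: ``no sources'' constrains $w\Lambda^{e_i}$, not $\Lambda w$, so one can have $\Lambda w=\{w\}$, in which case \eqref{proplambda} is vacuous at $w$ and is satisfied by every sufficiently long $\lambda\in w\Lambda$, including periodic ones for which $s_\lambda^*s_{\sigma}s_{\kappa}^*s_\lambda\ne0$ (a two-vertex $1$-graph with a single edge into $w$ and two loops at the other vertex realises this while remaining aperiodic, row-finite and without sources).

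The repair is available but must be made explicit: the concatenation $\mu_1\mu_2\cdots\mu_p\lambda'$ built inside the proof of Lemma~\ref{lem_there_exists_lambda} does have the simultaneous intrinsic property (if $\lambda(m,m+q)=\lambda(n,n+q)$ with $q=d(\lambda)-(m\vee n)$, then restricting to the block at offset $\sum_{j<i}d(\mu_j)$ contradicts the defining property of $\mu_i$), so you should either isolate and prove that statement, or else restructure the compression on the source side as the paper does, replacing $s_\lambda^*(\cdot)s_\lambda$ at the common range $w$ by the projections $Q_v=\sum_{\alpha}t_{\alpha\lambda_v}t^*_{\alpha\lambda_v}$ indexed by paths $\alpha$ with common source $v$, which is exactly the form the lemma is designed to feed. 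Two minor points: take $b=b^*$ (possible since $a\ge0$) so that $\Phi(b)_w$ is self-adjoint and admits an eigenvector with $|\lambda_w|=\|\Phi(b)_w\|$, and record that $\lambda$ must be chosen with $d(\lambda)\ge\bigvee_i\big(d(\sigma_i)\vee d(\kappa_i)\big)$ so that the products $s_\lambda^*s_{\sigma_i}$ and $s_{\kappa_i}^*s_\lambda$ reduce to initial-segment comparisons.
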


The proof of this theorem in \cite{KP2000} uses a groupoid model for $C^*(\Lambda)$. Here we
outline a direct proof that flows from the finite-path formulation of aperiodicity via the
following lemma.

\begin{lemma}\label{lem_there_exists_lambda}
Let $(\Lambda,d)$ be an aperiodic $k$-graph with no sources. Suppose that $v\in\Lambda^0$ and
$l\in\NN^k$. Then there exists $\lambda\in \Lambda$ such that $r(\lambda)=v$, $d(\lambda)\geq l$
and
\begin{equation}\label{proplambda}
\alpha,\beta\in \Lambda v,\ d(\alpha),d(\beta)
\leq l\ \text{and}\ \alpha\not=\beta\Longrightarrow(\alpha\lambda)(0,d(\lambda)) \not=
(\beta\lambda)(0,d(\lambda)).
\end{equation}
\end{lemma}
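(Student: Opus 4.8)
The plan is to build $\lambda$ by repeatedly extending a path so that, one pair at a time, we separate the "tails under $\alpha$" from the "tails under $\beta$" for all competing pairs $\alpha,\beta \in \Lambda v$ with $d(\alpha), d(\beta) \le l$. First I would observe that, since $\Lambda$ is row-finite with no sources, the set $F := \{(\alpha,\beta) : \alpha,\beta \in \Lambda v,\ d(\alpha),d(\beta)\le l,\ \alpha\ne\beta\}$ is finite: there are only finitely many degrees $\le l$, and for each such degree $m$ the set $\Lambda^m v$ is finite because $\Lambda$ is row-finite (iterate the finiteness of $v'\Lambda^{e_i}$). So it suffices to handle a single pair $(\alpha,\beta)$ and then induct on $|F|$, each time composing on the left of the path produced at the previous stage and checking that the separation achieved for earlier pairs is preserved under further right-extension (which it is: if $(\alpha\lambda)(0,d(\lambda)) \ne (\beta\lambda)(0,d(\lambda))$ and $\lambda = \lambda'\lambda''$ is a factorisation, it does \emph{not} immediately follow that $(\alpha\lambda')(0,d(\lambda')) \ne (\beta\lambda')(0,d(\lambda'))$ — so in fact I would phrase the induction the other way: extend on the \emph{right}, and note that once $(\alpha\lambda)(0,d(\lambda)) \ne (\beta\lambda)(0,d(\lambda))$, further right-extension $\lambda \mapsto \lambda\eta$ preserves the inequality since $(\alpha\lambda\eta)(0,d(\lambda)) = (\alpha\lambda)(0,d(\lambda))$ by the factorisation property).

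For the single-pair step, fix $\alpha \ne \beta$ in $\Lambda v$ with $d(\alpha), d(\beta) \le l$. Put $w := r(\alpha) \vee$-type considerations aside; the key point is that $\alpha$ and $\beta$ both have source $v$, but possibly different ranges and different degrees. I would reduce to the aperiodicity hypothesis by finding a single vertex reached from $v$ and a pair of exponents that witness the required inequality. Concretely: set $m := d(\alpha)$, $n := d(\beta)$. If $m = n$ then $\alpha \ne \beta$ already forces a difference at a vertex along any extension — more carefully, pick a path $\mu \in v\Lambda$ long enough (I will make "long enough" precise below) and consider $\alpha\mu$ and $\beta\mu$; these are paths of the same degree $m + d(\mu)$ with $\alpha\mu \ne \beta\mu$ (left-cancellation, which holds in a $k$-graph), so their degree-$(m + d(\mu))$ truncations differ provided we have arranged $(\alpha\lambda)(0, d(\lambda)) = (\alpha\mu)(0, \cdot)$, i.e. we take $\lambda := $ the appropriate right-extension. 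If $m \ne n$, this is exactly where aperiodicity enters: apply the aperiodicity condition at the vertex $v$ with the pair $m \ne n$ (after first replacing $\alpha, \beta$ by common-range representatives — actually $\alpha,\beta \in \Lambda v$ need not have a common range, so I would instead apply aperiodicity at suitable vertices and glue). Let me restructure: the cleanest route is to apply aperiodicity at $v$ to \emph{each} pair $m \ne n$ with $m, n \le l$ to get a path $\lambda_{m,n} \in v\Lambda$ with $d(\lambda_{m,n}) \ge m \vee n$ and $\lambda_{m,n}(m, m + d(\lambda_{m,n}) - (m\vee n)) \ne \lambda_{m,n}(n, n + d(\lambda_{m,n}) - (m\vee n))$; then take $\lambda$ to be a common right-extension of all the $\lambda_{m,n}$ together with an extension of length $\ge l$ (possible since $\Lambda$ has no sources, so $s(\mu)\Lambda^{e_i} \ne \emptyset$ always, and we can extend any path to any degree $\ge$ its own).

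The remaining task — and this is the step I expect to be the main obstacle — is to verify that this single $\lambda$ simultaneously separates \emph{all} pairs $(\alpha,\beta) \in F$, including the pairs with $d(\alpha) = d(\beta)$. For the equal-degree pairs I would use left-cancellativity of $\Lambda$ (a standard consequence of the factorisation property: $\gamma\delta = \gamma\delta'$ with $d(\delta) = d(\delta')$ implies $\delta = \delta'$) together with the observation that if $\alpha \ne \beta$, $d(\alpha) = d(\beta) = m$, and $\lambda$ has degree $\ge l \ge m$ with $r(\lambda) = v$, then $\alpha\lambda \ne \beta\lambda$, and since $d(\alpha\lambda) = d(\beta\lambda) = m + d(\lambda) \ge d(\lambda)$, the factorisation property gives $(\alpha\lambda)(0,d(\lambda)) \ne (\beta\lambda)(0,d(\lambda))$ — because if these degree-$d(\lambda)$ truncations agreed, then $\alpha\lambda$ and $\beta\lambda$ would differ only in their degree-$m$ heads $\alpha$ versus $\beta$ precomposed with a \emph{common} tail, and uniqueness of factorisation (run from the source end) would force $\alpha = \beta$. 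For the unequal-degree pairs $d(\alpha) = m \ne n = d(\beta)$: here $\alpha\lambda$ and $\beta\lambda$ have different degrees, so comparing their degree-$d(\lambda)$ truncations directly needs the aperiodicity-supplied asymmetry. I would extract from $\lambda_{m,n}$, via the factorisation property, that the segment of $\lambda$ sitting between coordinates $m$ and "$m$ plus (remaining length)" differs from the segment between $n$ and "$n$ plus (remaining length)", and translate this into $(\alpha\lambda)(0,d(\lambda)) \ne (\beta\lambda)(0,d(\lambda))$ by writing both as $v \to s(\lambda)$ paths of degree $d(\lambda)$ and locating the guaranteed discrepancy inside the window $[ \max(m,n), \min(m,n) + (d(\lambda) - m\vee n)]$, which is nonempty precisely because $d(\lambda) \ge m \vee n$ and $\ge l$. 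The bookkeeping of which coordinate-windows line up — reconciling the "$\lambda$ sees $\alpha$'s tail shifted by $m$" picture with the "$\lambda$ sees $\beta$'s tail shifted by $n$" picture — is the delicate part, and I would handle it by applying $\Omega_k$-morphism language from Remark~\ref{rmk:infinite paths}: regard $\alpha\lambda$ and $\beta\lambda$ as restrictions of graph morphisms and compare their values on the overlapping sub-box of $\Omega_{k,d(\lambda)}$.
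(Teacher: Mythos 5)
Your overall strategy --- separate the finitely many degree pairs $(m,n)$ with $m,n\le l$ one at a time, use aperiodicity for the unequal-degree pairs and uniqueness of factorisation for the equal-degree pairs, append a tail of degree at least $l$ so the discrepancy window fits inside $[0,d(\lambda)]$, and note that right-extension preserves any separation already achieved --- is exactly the shape of the paper's argument, and your window bookkeeping for a single pair (the discrepancy sits at $[m+n, m+n+L]$ after shifting $\lambda(n,n+L)$ by $d(\alpha)=m$ and $\lambda(m,m+L)$ by $d(\beta)=n$) is correct. But the step you commit to in your restructuring has a genuine gap: you apply aperiodicity at $v$ to get a witness $\lambda_{m,n}\in v\Lambda$ for \emph{each} pair $(m,n)$, and then ``take $\lambda$ to be a common right-extension of all the $\lambda_{m,n}$.'' Distinct paths in $v\Lambda$ need not admit a common extension --- already two distinct edges with range $v$ have none --- and the no-sources hypothesis only lets you extend a \emph{single} path to arbitrary degree. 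Nothing in the aperiodicity condition makes the various $\lambda_{m,n}$ compatible with one another, so the path $\lambda$ you want to build need not exist.

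The fix, which is what the paper does, is to obtain the witnesses \emph{successively} rather than in parallel: list the pairs as $(m^{(i)},n^{(i)})$, $1\le i\le p$, apply aperiodicity for the $i$-th pair not at $v$ but at the vertex $s(\mu_{i-1})$ reached by the previously constructed witnesses, obtaining $\mu_i$ with $r(\mu_i)=s(\mu_{i-1})$, $d(\mu_i)=(m^{(i)}\vee n^{(i)})+l^{(i)}$ and $\mu_i(m^{(i)},m^{(i)}+l^{(i)})\ne\mu_i(n^{(i)},n^{(i)}+l^{(i)})$, and then set $\lambda:=\mu_1\cdots\mu_p\lambda'$ with $d(\lambda')\ge l$. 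Concatenation replaces the (nonexistent) common extension, and your window argument survives with the single modification that the discrepancy for the $i$-th pair now sits at offset $d:=\sum_{j<i}d(\mu_j)$, i.e.\ in the window $[d+m^{(i)}+n^{(i)},\,d+m^{(i)}+n^{(i)}+l^{(i)}]$, which lies below $d(\lambda)$ thanks to the factor $\lambda'$. One further small point: your opening appeal to row-finiteness to make the set of pairs $(\alpha,\beta)$ finite is both unavailable (the lemma assumes only aperiodicity and no sources) and unnecessary --- only the finitely many pairs of \emph{degrees} below $l$ matter, which is how your restructured argument in fact proceeds.
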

\begin{proof}
We list pairs $(m,n)$ of distinct elements of $\N^k$ with $0\le m,n \le l$ as $\{(m^{(i)},n^{(i)})
: 1 \le i \le p\}$. Then an induction on $i$ shows that there exist $\mu_i$ and $l^{(i)}\in \N^k$
such that $r(\mu_1)=v$, $r(\mu_i)=s(\mu_{i-1})$ for $i\geq 1$, $d(\mu_i)=(m^{(i)}\vee
n^{(i)})+l^{(i)}$, and $\mu_i(m^{(i)},m^{(i)}+l^{(i)})\ne \mu_i(n^{(i)},n^{(i)}+l^{(i)})$. We now
choose an arbitrary path $\lambda'$ with $d(\lambda')\geq l$ and $r(\lambda')=s(\mu_p)$, and claim
that $\lambda:=\mu_1\mu_2\cdots\mu_p\lambda'$ has the required properties. We trivially have
$d(\lambda)\geq l$.

Suppose that $\alpha$ and $\beta$ are distinct paths with source $v$ and $d(\alpha)\vee d(\beta)\le
l$. If $d(\alpha)=d(\beta)=d$, say, then the initial segments $\alpha=(\alpha\lambda)(0,d)$ and
$\beta=(\beta\lambda)(0,d)$ are not equal, and $\alpha\lambda\not=\beta\lambda$. So suppose that
$d(\alpha)\not=d(\beta)$, say $(d(\alpha),d(\beta))=(m^{(i)},n^{(i)})$. Let $d:=
\sum_{j=1}^{i-1}d(\mu_j)$. Then
\[
(\alpha\lambda)(d(\alpha) + d + n^{(i)}, d(\alpha) + d + n^{(i)} + l^{(i)})
    =\mu_i(n^{(i)},n^{(i)}+l^{(i)})
\]
is not the same as $\mu_i(m^{(i)},m^{(i)}+l^{(i)})=(\beta\lambda)(d(\beta) + d + m^{(i)}, d(\beta)
d + m^{(i)} + l^{(i)})$. Since $d(\beta) + d + m^{(i)} = d + m^{(i)} + n^{(i)} = d(\alpha) + d +
n^{(i)}$, it follows that
\[
(\alpha\lambda)(d + m^{(i)} + n^{(i)}, d + m^{(i)} + n^{(i)} + l^{(i)})
    \not= (\beta\lambda)(d + m^{(i)} + n^{(i)}, d + m^{(i)} + n^{(i)} + l^{(i)}).
\]
The presence of the factor $\lambda'$ forces $d(\lambda)\geq d + m^{(i)} + n^{(i)} + l^{(i)}$, so
$(\alpha\lambda)(0,d(\lambda))\not=(\beta\lambda)(0,d(\lambda))$, as required.
\end{proof}

\begin{prop}\label{prp:norm-decreasing}
Suppose that $\Lambda$ is a row-finite aperiodic $k$-graph with no sources, and let $\{t_\lambda :
\lambda \in \Lambda\}$ be a Cuntz-Krieger $\Lambda$-family in a $C^*$-algebra $B$ such that $t_v
\not= 0$ for all $v \in \Lambda^0$. Let $F$ be a finite subset of $\Lambda$ and let $a : (\mu,\nu)
\mapsto a_{\mu,\nu}$ be a $\CC$-valued function on $F \times F$ such that $s(\mu) = s(\nu)$ whenever $a_{\mu,\nu} \not= 0$. Then
\[
    \Big\| \sum_{\mu,\nu \in F} a_{\mu,\nu} t_\mu t^*_\nu \Big\| \ge \Big\|\sum_{\mu,\nu \in F, d(\mu) = d(\nu)} a_{\mu,\nu} t_\mu t^*_\nu \Big\|.
\]
\end{prop}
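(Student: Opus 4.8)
The plan is to bound $\|x\|$ below, where $x:=\sum_{\mu,\nu\in F}a_{\mu,\nu}t_\mu t^*_\nu$, by compressing $x$ by a projection $P$ built from the aperiodicity paths of Lemma~\ref{lem_there_exists_lambda}; since $\|PxP\|\le\|x\|$ for every projection $P$, it suffices to exhibit $P$ with $\|PxP\|\ge\|y\|$, where $y:=\sum_{\mu,\nu\in F,\,d(\mu)=d(\nu)}a_{\mu,\nu}t_\mu t^*_\nu$.

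I would set $l:=\bigvee_{\mu\in F}d(\mu)$ and let $V:=\{s(\mu):\mu\in F\}$, a finite set. For each $v\in V$ I would use Lemma~\ref{lem_there_exists_lambda} to pick $\lambda_v\in v\Lambda$ with $d(\lambda_v)\ge l$ satisfying~\eqref{proplambda}; since $\Lambda$ has no sources, I can then extend all the $\lambda_v$ to a common degree $N\ge l$, which preserves~\eqref{proplambda} because extending $\lambda_v$ only lengthens the initial segments that condition compares. The separating property~\eqref{proplambda} amounts to: for distinct $\mu,\nu\in F$ with $s(\mu)=s(\nu)$, the paths $(\mu\lambda_{s(\mu)})(0,N)$ and $(\nu\lambda_{s(\nu)})(0,N)$, of the common degree $N$, are distinct, so $\mu\lambda_{s(\mu)}$ and $\nu\lambda_{s(\nu)}$ have no common extension. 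The idea is to use this to manufacture, for each $v\in V$ and each degree $p=d(\mu)$ that occurs with $s(\mu)=v$, a nonzero subprojection $q_{v,p}\le t_v$ with the $q_{v,\cdot}$ pairwise orthogonal, and to take $P$ to be a sum $\sum_\xi t_\xi t^*_\xi$ over suitable truncations $\xi$ of the paths $\mu\lambda_{s(\mu)}$, chosen so that $Pt_\mu=t_\mu q_{s(\mu),d(\mu)}$ for every $\mu\in F$; for instance one can try taking the $q_{v,p}$ to be successive differences of the decreasing chain $t_{\lambda_v(0,m)}t^*_{\lambda_v(0,m)}$ of projections coming from the initial segments of $\lambda_v$, which are automatically pairwise orthogonal.

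With such a $P$, one computes $Pt_\mu t^*_\nu P=t_\mu\,q_{s(\mu),d(\mu)}q_{s(\nu),d(\nu)}\,t^*_\nu$ for $\mu,\nu\in F$ with $a_{\mu,\nu}\ne0$ (so $s(\mu)=s(\nu)$): when $d(\mu)\ne d(\nu)$ the middle product vanishes, so these terms are killed, and when $d(\mu)=d(\nu)$ one is left with $a_{\mu,\nu}t_\mu q_{s(\mu),d(\mu)}t^*_\nu$. Hence $PxP=\sum_{\mu,\nu\in F,\,d(\mu)=d(\nu)}a_{\mu,\nu}t_\mu q_{s(\mu),d(\mu)}t^*_\nu$, and since $t^*_\mu t_\nu=\delta_{\mu,\nu}t_{s(\mu)}$ whenever $d(\mu)=d(\nu)$ and each $q_{v,p}$ is a nonzero projection, the family $\{t_\mu q_{s(\mu),d(\mu)}\}$ carries the same information as $\{t_\mu\}$, giving $\|PxP\|\ge\|y\|$.

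The step I expect to be the main obstacle is precisely the construction and analysis of $P$: realising it as a sum of $t_\xi t^*_\xi$ so that $Pt_\mu=t_\mu q_{s(\mu),d(\mu)}$ with no stray cross terms, arranging that the $q_{v,p}$ are nonzero, and verifying that $PxP$ really reproduces the full norm $\|y\|$ rather than only that of a single block --- all of which is where~\eqref{proplambda} is used, and which may require first enlarging $F$ (for example closing it up under suitable initial segments) so that its elements do not interfere with one another under multiplication by $P$. Granting this, $\|x\|\ge\|PxP\|\ge\|y\|$ is the assertion of the proposition.
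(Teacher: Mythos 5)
Your overall strategy---compress $x$ by a projection built from the paths of Lemma~\ref{lem_there_exists_lambda}, check that the compression kills the terms with $d(\mu)\ne d(\nu)$, and then argue that it preserves the norm of the diagonal part---has the same skeleton as the paper's proof. But there is a genuine gap exactly at the step you flag as the main obstacle, and the missing idea is specific: before compressing, one must use~\eqref{ck4} to rewrite $\sum a_{\mu,\nu}t_\mu t_\nu^*$ as a sum over $G:=\bigcup_{\mu\in F}Fs(\mu)\Lambda^{n-d(\mu)}$ with $n:=\bigvee_{\mu\in F}d(\mu)$, so that every \emph{first} index has the common degree $n$ and the diagonal terms become precisely those with $d(\nu)=n$ as well. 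Your suggested remedy of ``closing $F$ up under initial segments'' goes in the wrong direction; what is needed is closing up under extensions to the top degree.

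Without that normalisation your final inequality $\|PxP\|\ge\|y\|$ does not follow from the reasoning you give. The diagonal part $y$ is a sum of terms $t_\mu t_\nu^*$ of \emph{several different} common degrees $p$; these do not form a single family of matrix units (one $\mu$ may be an initial segment of another), and the map $t_\mu t_\nu^*\mapsto t_\mu q_{s(\mu),d(\mu)}t_\nu^*$ is not multiplicative across degrees: for $d(\mu_1)=d(\nu_1)=p_1$, $d(\mu_2)=d(\nu_2)=p_2$ and $\mu_2=\nu_1\sigma$ one is led to compare $t_{\mu_1\sigma}t_{\nu_2}^*$ with $t_{\mu_1}q_{v,p_1}t_\sigma q_{w,p_2}t_{\nu_2}^*$, and there is no identity forcing these to have the same norm contribution. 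So even though each fixed-degree block is compressed faithfully, the norm of their (non-orthogonal) sum can drop, and ``the family $\{t_\mu q_{s(\mu),d(\mu)}\}$ carries the same information'' is not a proof. After the~\eqref{ck4} refinement the diagonal part lies in the single finite-dimensional algebra spanned by $\{t_\mu t_\nu^*:\mu,\nu\in G\cap\Lambda^n,\ s(\mu)=s(\nu)\}\cong\bigoplus_v M_{Gv\cap\Lambda^n}(\CC)$, the compression by $Q_v=\sum_{\alpha\in Gv\cap\Lambda^n}t_{\alpha\lambda_v}t^*_{\alpha\lambda_v}$ carries these matrix units to the nonzero matrix units $t_{\mu\lambda_v}t^*_{\nu\lambda_v}$, and isometry is just the statement that an injective $*$-homomorphism of finite-dimensional $C^*$-algebras is isometric. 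A secondary consequence of the refinement is that the projections need only be indexed by vertices, not by vertex-and-degree pairs: the off-diagonal terms are killed because~\eqref{proplambda} forces $t^*_{\nu\lambda_{s(\nu)}}t_{\alpha\lambda_{s(\alpha)}}=0$ for all distinct $\nu,\alpha\in Gv$ (in particular whenever $d(\nu)\ne n=d(\alpha)$), rather than by an a priori orthogonality of ``successive difference'' projections, whose construction in your sketch does not actually invoke~\eqref{proplambda} at all.
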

\begin{proof}
Let $a := \sum_{\mu,\nu \in F} a_{\mu,\nu} t_\mu t^*_\nu$ and let $a_0 := \sum_{\mu,\nu \in F,
d(\mu) = d(\nu)} a_{\mu,\nu} t_\mu t^*_\nu$. Define $n := \bigvee_{\mu \in F} d(\mu)$, and let $G
:= \bigcup_{\mu \in F} F s(\mu)\Lambda^{n - d(\mu)}$. So if $\mu,\nu \in F$ with $s(\mu) = s(\nu)$
and $d(\mu\alpha) = n$, then $\mu\alpha, \nu\alpha \in G$. By applying~\eqref{ck4} at $s(\mu)$ for
each $\mu,\nu \in F$, we can express
\[
a = \sum_{\mu,\nu \in G} b_{\mu,\nu} t_\mu t^*_\nu\quad\text{ and }\quad
    a_0 = \sum_{\mu,\nu \in G, d(\mu) = d(\nu)} b_{\mu,\nu} t_{\mu} t^*_{\nu},
\]
where $b_{\mu,\nu} \not= 0$ implies $d(\mu) = n$ and $s(\mu) = s(\nu)$.

For each $v \in s(G)$, apply Lemma~\ref{lem_there_exists_lambda} with $l = \bigvee_{\nu \in G}
d(\nu)$ to find $\lambda_v \in v\Lambda$ such that $d(\lambda) \ge l$ and
\[
    (\alpha\lambda_v)(0, l) \not= (\beta\lambda_v)(0,l)\text{ for distinct }\alpha,\beta \in Gv,
\]
and let $Q_v := \sum_{\alpha \in Gv, d(\alpha) = n} t_{\alpha\lambda_v} t^*_{\alpha\lambda_v}$.
Then~(\ref{ck3}) implies that the $Q_v$ are mutually orthogonal projections. Hence
\begin{equation}\label{eq:Q.Q norm decreasing}
\Big\|\sum_{v \in s(G)} Q_v a Q_v\Big\| \le \|a\|.
\end{equation}

We show that
\begin{equation}\label{eq:QaQ=Qa0Q}
\sum_{v \in s(G)} Q_v a Q_v = \sum_{v \in s(G)} Q_v a_0 Q_v.
\end{equation}
For $\mu,\nu \in G$ with $s(\mu) = s(\nu)$ and $d(\mu) = n$, a quick calculation using~(\ref{ck4})
gives
\begin{equation}\label{eq:Qmult calculation}
Q_v t_\mu t^*_\nu = \delta_{v, s(\mu)} t_{\mu\lambda_{s(\mu)}} t^*_{\nu\lambda_{s(\mu)}}.
\end{equation}
Suppose $d(\mu) \not= n$ and fix $\alpha \in G \cap \Lambda^n$. Then
$(\alpha\lambda_{s(\alpha)})(0,l) \not= (\nu\lambda_{s(\nu)})(0,l)$, and hence
$t^*_{\nu\lambda_{s(\mu)}} t_{\alpha\lambda_{s(\alpha)}} = 0$. This and~\eqref{eq:Qmult
calculation} give $Q_v t_\mu t^*_\nu Q_v = 0$ for all $v$, and~\eqref{eq:QaQ=Qa0Q} follows.

Finally we show that
\begin{equation}\label{eq:Q.Q isometric}
\Big\|\sum_{v \in s(G)} Q_v a_0 Q_v\Big\| = \|a_0\|.
\end{equation}

Routine calculations using the Cuntz-Krieger relations and that the $t_v$ are all nonzero show that
$\{t_\mu t^*_\nu : \mu,\nu \in G \cap \Lambda^n, s(\mu) = s(\nu)\}$ is a family of nonzero matrix
units spanning an isomorphic copy of $\bigoplus_{v \in s(G)} M_{Gv \cap \Lambda^n}(\CC)$, and that
$\{t_{\mu\lambda_{s(\mu)}} t^*_{\nu\lambda_{s(\nu)}} : \mu,\nu \in G \cap \Lambda^n, s(\mu) =
s(\nu)\}$ is a family of nonzero matrix units for the same finite-dimensional $C^*$-algebra. Hence
$t_\mu t^*_\nu \mapsto t_{\mu\lambda_{s(\mu)}} t^*_{\nu\lambda_{s(\nu)}}$ determines an isomorphism
of finite-dimensional subalgebras of $C^*(\Lambda)$, so is isometric. Calculations
like~\eqref{eq:Qmult calculation} show that $\sum_{v \in s(G)} Q_v t_\mu t^*_\nu Q_v =
t_{\mu\lambda_{s(\mu)}} t^*_{\nu\lambda_{s(\mu)}}$ whenever $\mu,\nu \in G \cap \Lambda^n$ with
$s(\mu) = s(\nu)$, and~\eqref{eq:Q.Q isometric} follows.

Combining~\eqref{eq:QaQ=Qa0Q}, \eqref{eq:Q.Q norm decreasing}~and~\eqref{eq:Q.Q isometric} proves
the Proposition.
\end{proof}

For the following proof, recall from the opening of \cite[Section~3]{KP2000} that there is a
strongly continuous action $\gamma : \TT^k \to \Aut(C^*(\Lambda))$ characterised by
\[
\gamma_z(s_\lambda) = z^{d(\lambda)} s_\lambda = z_1^{d(\lambda)_1}z_2^{d(\lambda)_2} \cdots z_k^{d(\lambda)_k} s_\lambda
\]
for all $\lambda \in \Lambda$. Averaging over this action yields a faithful conditional expectation
$\Phi : C^*(\Lambda) \to \clsp\{s_\mu s^*_\nu : d(\mu) = d(\nu)\}$ (see \cite[Lemma~3.3]{KP2000})
such that $\Phi(s_\mu s^*_\nu) = \delta_{d(\mu), d(\nu)} s_\mu s^*_\nu$ for all $\mu,\nu \in
\Lambda$.

\begin{proof}[Proof of Theorem~\ref{thm_Cuntz_Krieger_Uniqueness}]
Follow the first paragraph of the proof of \cite[Theorem~3.4]{KP2000} to see that $\pi$ is
injective on $C^*(\Lambda)^\gamma = \clsp\{s_\mu s^*_\nu : d(\mu) = d(\nu)\}$.

Proposition~\ref{prp:norm-decreasing} implies that the formula
\[
\sum_{\mu,\nu \in F} a_{\mu,\nu} t_\mu t^*_\nu \mapsto \sum_{\mu,\nu \in F, d(\mu) = d(\nu)} a_{\mu,\nu} t_\mu t^*_\nu
\]
is well defined on finite linear combinations (if two linear combinations are equal,
Proposition~\ref{prp:norm-decreasing} implies that the norm of the difference of their images is
zero), and norm-decreasing, and hence extends by continuity to a linear map $\Psi :
\pi_t(C^*(\Lambda)) \to \clsp\{t_\mu t^*_\nu : d(\mu) = d(\nu)\}$ such that $\Psi(t_\mu t^*_\nu) =
\delta_{d(\mu), d(\nu)} t_\mu t^*_\nu$.

To complete the proof, we argue as in the last two lines of the proof of
\cite[Theorem~3.4]{KP2000}: Let $\Phi$ be the faithful conditional expectation on $C^*(\Lambda)$
described above. By linearity and continuity, $\pi \circ \Phi = \Psi \circ \pi$. Suppose that
$\pi(a) = 0$. Then $\Psi(\pi(a^*a)) = 0$ and hence $\pi(\Phi(a^*a)) = 0$. Since $\pi$ is injective
on $C^*(\Lambda)^\gamma$, it follows that $\Phi(a^*a) = 0$. Since $\Phi$ is a faithful expectation,
we then have $a^*a = 0$ and hence $a = 0$.
\end{proof}

Let $\Lambda$ be a row-finite graph without sources. As in \cite{LS2010}, we say that $\Lambda$ is
\emph{cofinal} if for every pair $v,w \in \Lambda^0$ there exists $n \in \NN^k$ such that $v\Lambda
s(\lambda) \not= \emptyset$ for all $\lambda \in w\Lambda^n$.

\begin{rmk}
For row-finite graphs without sources, \cite[Proposition~A.2]{LS2010} implies that this notion of
cofinality is equivalent to \cite[Definition~3.3]{LS2010}, and hence by \cite[Theorem~5.1]{LS2010}
to the usual one involving infinite paths.
\end{rmk}

Modulo the different formulation of cofinality, the following characterisation of simplicity
appeared in \cite{RS2007}, and was generalised to locally convex $k$-graphs in \cite{RS2009} and
finitely aligned $k$-graphs in \cite{LS2010,pp_Shotwell2008}.

\begin{theorem}\label{thm:simple}
Let $\Lambda$ be a row-finite $k$-graph with no sources. Then $C^*(\Lambda)$ is simple if and only
if $\Lambda$ is both aperiodic and cofinal.
\end{theorem}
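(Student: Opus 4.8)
The plan is to prove each direction separately, leaning on the finite-path characterisations of aperiodicity and cofinality that were set up in this section, so that the argument never needs to invoke the groupoid $\Gg_\Lambda$ or the infinite-path machinery of \cite{KP2000}.

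For the ``if'' direction, suppose $\Lambda$ is aperiodic and cofinal, and let $\pi : C^*(\Lambda) \to B$ be a nonzero representation with image generated by a Cuntz--Krieger $\Lambda$-family $\{t_\lambda\}$; I must show $\pi$ is faithful. By Theorem~\ref{thm_Cuntz_Krieger_Uniqueness} it suffices to show each $t_v$ is nonzero. First observe that at least one $t_v \ne 0$ since $\pi \ne 0$ and $C^*(\Lambda)$ is spanned by the $s_\mu s^*_\nu$ with $t_v = t^*_{s(\mu)}t_{s(\mu)}$ appearing in every nonzero term. Now fix an arbitrary $w \in \Lambda^0$; I want $t_w \ne 0$. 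Choose $v$ with $t_v \ne 0$ and apply cofinality to the pair $(v,w)$ to get $n \in \NN^k$ such that $v\Lambda s(\lambda) \ne \emptyset$ for every $\lambda \in w\Lambda^n$; since $\Lambda$ has no sources, $w\Lambda^n \ne \emptyset$, so fix such a $\lambda$ and a path $\tau \in v\Lambda s(\lambda)$. Then~\eqref{ck3} gives $t^*_\tau t_\tau = t_{s(\tau)} = t_{s(\lambda)}$, and $t^*_\tau t_\tau \le t^*_v t_v \cdot$ (more precisely $t_\tau t^*_\tau \le t_v$ by~\eqref{ck4} applied at $v$), so $t_{s(\lambda)} \ne 0$; then~\eqref{ck3} for $\lambda$ forces $t_\lambda \ne 0$, and since $t_\lambda t^*_\lambda \le t_w$ by~\eqref{ck4} at $w$ we conclude $t_w \ne 0$. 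As $w$ was arbitrary, all $t_v$ are nonzero and Theorem~\ref{thm_Cuntz_Krieger_Uniqueness} applies: $\pi$ is faithful, so $C^*(\Lambda)$ is simple.

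For the ``only if'' direction, I argue contrapositively: if $\Lambda$ fails either condition, I exhibit a proper nonzero ideal (equivalently a non-faithful nonzero representation). If $\Lambda$ is not aperiodic, the standard gauge-invariance argument applies: periodicity produces a nonzero element of $C^*(\Lambda)^\gamma$ that is killed by a representation failing to be faithful --- concretely, one can use the failure of the finite-path aperiodicity condition at some vertex $v$ and pair $m \ne n$ to build, via the gauge action and the conditional expectation $\Phi$, a nonzero element $a$ with $\Phi(a^*a)$ small or zero while $a$ itself generates a proper ideal; alternatively one cites that the periodicity forces $C^*(\Lambda)$ to have a quotient by a non-faithful map, contradicting simplicity. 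If $\Lambda$ is aperiodic but not cofinal, there is a pair $(v,w)$ for which no such $n$ exists; I then build a nonempty, proper, hereditary and saturated set of vertices $H$ (the set of vertices ``not reachable from $w$ in the cofinal sense'') and invoke the standard fact that such an $H$ yields a proper nonzero gauge-invariant ideal $I_H$ of $C^*(\Lambda)$ --- and by the Cuntz--Krieger uniqueness theorem, aperiodicity makes every nonzero ideal contain a vertex projection, so $I_H$ being proper contradicts simplicity only if $H$ is properly chosen; the point is that non-cofinality is exactly what is needed to make $H$ both nonempty and proper.

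The main obstacle I expect is the ``only if'' direction, and specifically the non-cofinal case: I need the correspondence between saturated hereditary sets of vertices and gauge-invariant ideals of $C^*(\Lambda)$ (which for row-finite $k$-graphs with no sources is available but was not stated in this excerpt), and I need to check carefully that the natural candidate $H := \{u \in \Lambda^0 : \text{for all } n,\ \exists \lambda \in w\Lambda^n \text{ with } u\Lambda s(\lambda) = \emptyset\}$ --- or its complement --- is genuinely saturated and hereditary and, crucially, that non-cofinality makes it a \emph{proper} nonempty set so the associated ideal is nontrivial. The periodicity case is more routine since it follows the template of the proof of Theorem~\ref{thm_Cuntz_Krieger_Uniqueness} in reverse, using $\Phi$ and the gauge action, but I would still need to produce the explicit nonzero element of the fixed-point algebra that witnesses non-simplicity, which requires unwinding the finite-path aperiodicity condition into a genuine periodic tail.
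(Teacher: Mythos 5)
Your ``if'' direction contains a genuine gap at the step where you propagate nonvanishing of vertex projections along the paths supplied by cofinality. From $\tau \in v\Lambda s(\lambda)$ you have $t_\tau t^*_\tau \le t_v$ and $t^*_\tau t_\tau = t_{s(\lambda)}$, and you conclude that $t_v \ne 0$ forces $t_{s(\lambda)} \ne 0$. This does not follow: a subprojection of a nonzero projection can perfectly well be zero, so $t_\tau t^*_\tau \le t_v \ne 0$ gives no information about $t_\tau$. (The inequality you wrote first, $t^*_\tau t_\tau \le t^*_v t_v$, is false in general, since $t^*_\tau t_\tau = t_{s(\tau)}$ is orthogonal to $t_v$ when $s(\tau) \ne v$.) The Cuntz--Krieger relations only let \emph{vanishing} flow forwards along paths: $t_{r(\tau)} = 0$ implies $t_\tau = t_{r(\tau)}t_\tau = 0$ and hence $t_{s(\tau)} = t^*_\tau t_\tau = 0$. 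So the argument must be run the other way round: if some $t_u = 0$, apply cofinality to the pair $(u,w)$ for arbitrary $w$ to get $n$ with $u\Lambda s(\lambda) \ne \emptyset$ for all $\lambda \in w\Lambda^n$; heredity then kills every $t_{s(\lambda)}$, and \eqref{ck4} at $w$ kills $t_w$, so $\pi = 0$, a contradiction. Equivalently (this is how the paper argues), a nonzero ideal $I$ containing $s_v$ contains every $s_\mu$ via $s_\mu = \sum_{\lambda \in s(\mu)\Lambda^n} s_{\mu\lambda}(s^*_{\nu_\lambda} s_v s_{\nu_\lambda})s^*_\lambda$ with $\nu_\lambda \in v\Lambda s(\lambda)$. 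In the corrected argument cofinality is applied with the \emph{vanishing} vertex in the first slot, not the nonvanishing one.

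The ``only if'' direction as written is a plan rather than a proof, and you acknowledge the two missing ingredients yourself. The paper supplies both without the ideal-structure theory you propose to import. For failure of aperiodicity it exhibits the explicit element $a = s_\lambda s^*_\lambda - s_\mu s^*_\nu s_\lambda s^*_\lambda$ with $\lambda \in v\Lambda^{m\vee n}$, $\mu = \lambda(0,m)$, $\nu = \lambda(0,n)$: periodicity forces $\sigma^m(x) = \sigma^n(x)$ for every $x \in v\Lambda^\infty$, so $a$ lies in the kernel of the infinite-path representation $\pi_S$ (which kills no vertex projection), while $a \ne 0$ because $a + \gamma_z(a) = 2s_\lambda s^*_\lambda$ when $z^{m-n} = -1$. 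Note that $a$ is \emph{not} in the fixed-point algebra, so your description of the witness as a nonzero element of $C^*(\Lambda)^\gamma$ killed by a representation is off-target. For failure of cofinality the paper avoids hereditary saturated sets entirely: Lemma~\ref{lem:what is cofinality} produces $v$ and $x \in \Lambda^\infty$ with $v\Lambda x(n) = \emptyset$ for all $n$, and restricting $\pi_S$ to the shift-tail equivalence class of $x$ yields a representation killing $s_v$ but not $s_{r(x)}$. Your route via a saturated hereditary set $H$ and the ideal $I_H$ could be made to work, but establishing that $I_H$ is both nonzero and proper is precisely the machinery unavailable here; you would need to prove it, not merely cite it.
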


In the proof we use the infinite-path representation. By \cite[Proposition~2.3]{KP2000}, for $x \in
\Lambda^\infty$, $\lambda \in \Lambda r(x)$ and $n \in \NN^k$, there are unique elements
$\sigma^n(x)$ and $\lambda x$ of $\Lambda^\infty$ such that
\[
\sigma^n(x)(p,q) = x(n+p, n+q)\ \text{and}\
(\lambda x)(p,q) = (\lambda x(0, q))(p,q)
\]
for $p \le q \in \NN^k$. Let $\{\xi_x : x \in \Lambda^\infty\}$ be the usual orthonormal basis for
$\ell^2(\Lambda^\infty)$. Then for each $\lambda\in \Lambda$ there is a partial isometry
$S_\lambda$ on $\ell^2(\Lambda^\infty)$ such that $S_\lambda\xi_x=\xi_{\lambda x}$, and
$\{S_\lambda : \lambda \in \Lambda\}$ is a Cuntz-Krieger $\Lambda$-family which gives a
representation $\pi_S$ of $C^*(\Lambda)$ on $\ell^2(\Lambda^\infty)$.

The following lemma is a special case of the implication \mbox{(ii)$\;\implies\;$(i)} in
\cite[Theorem~5.1]{LS2010}, but the proof simplifies significantly in our setting.

\begin{lemma}\label{lem:what is cofinality}
Let $\Lambda$ be a row-finite $k$-graph with no sources which is not cofinal. Then there exist a
vertex $v \in \Lambda^0$ and an infinite path $x \in \Lambda^\infty$ such that $v\Lambda x(n) =
\emptyset$ for all $n \in \NN^k$.
\end{lemma}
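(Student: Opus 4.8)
The plan is to fix, using the failure of cofinality, a pair $v,w\in\Lambda^0$ such that for every $n\in\NN^k$ there is some $\lambda\in w\Lambda^n$ with $v\Lambda s(\lambda)=\emptyset$, and then to build an infinite path out of $w$ all of whose vertices lie in the set $B:=\{u\in\Lambda^0 : v\Lambda u=\emptyset\}$. The first observation is that $B$ is closed under taking ranges of edges: if $e$ is an edge with $s(e)\in B$ then $r(e)\in B$, since any path from $v$ to $r(e)$ would extend by $e$ to a path from $v$ to $s(e)$. Consequently, if $\lambda\in\Lambda$ has $s(\lambda)\in B$, then factoring $\lambda$ into edges and applying this observation repeatedly shows that $\lambda(p)\in B$ for every $p\le d(\lambda)$; in particular $r(\lambda)=\lambda(0)\in B$.

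The second step identifies the vertices from which an infinite path inside $B$ can start. Call $u\in\Lambda^0$ \emph{good} if for every $m\in\NN$ there is $\lambda\in u\Lambda^{(m,\dots,m)}$ with $s(\lambda)\in B$; by the first step this is the same as asking that all vertices of $\lambda$ lie in $B$. Applying the first step to the paths witnessing the failure of cofinality shows that $w$ is good (and in particular $w\in B$). The key claim is: if $u$ is good then for each $i\le k$ there is an edge $e\in u\Lambda^{e_i}$ with $s(e)$ good. To prove it, for each $m\ge 1$ pick a witnessing $\lambda^{(m)}\in u\Lambda^{(m,\dots,m)}$ and factor it as $\lambda^{(m)}(0,e_i)\,\lambda^{(m)}(e_i,(m,\dots,m))$. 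Since $\Lambda$ is row finite, $u\Lambda^{e_i}$ is finite, so by pigeonhole some fixed $e\in u\Lambda^{e_i}$ equals $\lambda^{(m)}(0,e_i)$ for infinitely many $m$; for each such $m$ the tail $\lambda^{(m)}(e_i,(m,\dots,m))$ has range $s(e)$, degree $\ge(m-1,\dots,m-1)$, and all vertices in $B$, so its initial subpath of degree $(m-1,\dots,m-1)$ witnesses goodness of $s(e)$ for that value; as $m$ runs over an infinite set this covers all values, so $s(e)$ is good.

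Finally I would build the path by iterating the key claim while cycling through the colours. Starting from $w_0:=w$, given good $w_{j-1}$, take $i$ with $i\equiv j\pmod{k}$ and choose $e_j\in w_{j-1}\Lambda^{e_i}$ with $w_j:=s(e_j)$ good. The concatenation $x:=e_1e_2e_3\cdots$ is well defined as an element of $\Lambda^\infty$ precisely because every colour is used infinitely often, so $d(e_1\cdots e_j)$ increases to $(\infty,\dots,\infty)$; and after $km$ steps $d(e_1\cdots e_{km})=(m,\dots,m)$, so $x((m,\dots,m))=s(e_{km})=w_{km}\in B$. For an arbitrary $n\in\NN^k$, choose $m$ with $(m,\dots,m)\ge n$; then $x(n,(m,\dots,m))$ is a path with range $x(n)$ and source $x((m,\dots,m))\in B$, so factoring it into edges and using that $B$ is closed under ranges of edges gives $x(n)\in B$, i.e. $v\Lambda x(n)=\emptyset$. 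This yields the required $v$ and $x$. I expect the main obstacle to be the bookkeeping in the key claim — propagating ``goodness'' along a single edge of each colour via row finiteness and pigeonholing — together with the care needed to ensure that the resulting infinite path genuinely has degree $(\infty,\dots,\infty)$ and that \emph{every} vertex of it, not merely those along the chosen staircase, lies in $B$; it is the last point that forces the use of closure of $B$ under ranges of edges rather than a naive application of K\"onig's lemma.
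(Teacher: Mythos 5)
Your proof is correct and follows essentially the same route as the paper's: both exploit that the set of vertices unreachable from $v$ is hereditary under taking ranges, and both use row-finiteness together with a pigeonhole/K\"onig-type argument to extract an infinite path starting at $w$ all of whose vertices are unreachable from $v$. The paper organises the diagonal argument by extracting nested infinite subsets of the witnessing paths and extending by degree $(1,\dots,1)$ at each stage, whereas you propagate a ``good vertex'' predicate one edge at a time while cycling through the colours --- a difference of bookkeeping rather than of substance.
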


\begin{proof}
Since $\Lambda$ is not cofinal, there exist $v,w \in \Lambda^0$ such that, for each $n \in \NN^k$,
there exists $\lambda \in w\Lambda^n$ with $v\Lambda s(\lambda) = \emptyset$. Choose $n^{(i)}\to
\infty$ in $\NN^k$, and $\lambda_i\in w\Lambda^{n^{(i)}}$ such that $v\Lambda s(\lambda_i)
=\emptyset$. Let $1_k = (1, \dots, 1) \in \NN^k$. Since $\Lambda$ is row-finite, there exists
$\mu_1 \in v\Lambda^{1_k}$ such that $S_1 := \{j \in \NN : \lambda_j(0, 1_k) = \mu_1\}$ is
infinite. An induction argument now shows that there is a sequence $(\mu_i)^\infty_{i=1}$ in
$v\Lambda$ such that for every $i \ge 2$, we have $\mu_i \in \mu_{i-1} \Lambda^{1_k}$, and $S_i :=
\{j \in S_{i-1} : \lambda_j(0, i\cdot 1_k)\}$ is infinite. In particular, for any $i \in \NN$ and
$j \in S_i$ that $v\Lambda s(\lambda_j) =\emptyset$ forces $v\Lambda s(\mu_i) =\emptyset$. Since
$d(\mu_i) \to (\infty, \dots, \infty)$, \cite[Remarks~2.2]{KP2000} imply that there is an infinite
path $x$ such that $x(0,d(\mu-i))=\mu_i$ for all $i$. Now since $v\Lambda x(d(\mu_i))=v\Lambda
s(\mu_i)=\emptyset$ for all $i$, we have $v\Lambda x(n)=\emptyset$ for all $n$, so the infinite
path $x$ has the required properties.
\end{proof}

\begin{proof}[Proof of Theorem~\ref{thm:simple}]
First suppose that $\Lambda$ is aperiodic and cofinal, and let $I$ be a nonzero ideal in
$C^*(\Lambda)$. To see that $I = C^*(\Lambda)$, we fix $\mu\in\Lambda$ and aim to show that $s_\mu$
belongs to $I$. Since $\Lambda$ is aperiodic,  the Cuntz-Krieger uniqueness theorem
(Theorem~\ref{thm_Cuntz_Krieger_Uniqueness}) implies that $I$ contains a vertex projection $s_v$.
Applying cofinality with this $v$ and $w = s(\mu)$ gives $n \in \NN^k$ such that for each $\lambda
\in s(\mu)\Lambda^n$ there exists $\nu_\lambda \in v\Lambda s(\lambda)$. Then
\[
s_\mu
    = \sum_{\lambda \in s(\mu)\Lambda^n} s_{\mu\lambda} s_{s(\lambda)} s_\lambda^*
    = \sum_{\lambda \in s(\mu)\Lambda^n} s_{\mu\lambda} (s^*_{\nu_\lambda} s_v s_{\nu_\lambda}) s_\lambda^*
    \in I,
\]
as required.

For the other direction, we first suppose that $\Lambda$ is not aperiodic, so that there exist $v
\in \Lambda^0$ and distinct $m,n \in \NN^k$ such that, for every $\lambda\in v\Lambda$ with
$d(\lambda)\geq m\vee n$,
\begin{equation}\label{eq:not aperiodic}
    \lambda(m, m+d(\lambda) - (m \vee n)) = \lambda(n, n + d(\lambda) - (m \vee n)).
\end{equation}
Then for every $x \in v\Lambda^\infty$ and $l \in \NN^k$, we can apply \eqref{eq:not aperiodic} to
$\lambda = x(0, (m \vee n) + l)$ and deduce that $x(m, m+l) = x(n, n+l)$, whence $\sigma^m(x) =
\sigma^n(x)$.

We now fix $\lambda \in v\Lambda^{m \vee n}$, let $\mu = \lambda(0,m)$ and $\nu = \lambda(0,n)$,
and aim to prove that $a := s_\lambda s^*_\lambda -s_\mu s^*_\nu s_\lambda s^*_\lambda$ is nonzero
and belongs to $\ker\pi_S$; since we know that $\ker \pi_S$ does not contain any vertex
projections, this will prove that $\ker\pi_S$ is a nontrivial ideal. To see that $\pi_S(a)=0$, fix
$x\in\Lambda^\infty$ and compute
\begin{equation}\label{eq:equals proj}
\pi_S(a)\xi_x = (S_\lambda S^*_\lambda-S_\mu S^*_\nu S_\lambda S^*_\lambda)\xi_x.
\end{equation}
If $x(0,d(\lambda))\not=\lambda$, then \eqref{eq:equals proj} vanishes. If $x(0,d(\lambda)) =
\lambda$, then $x \in v\Lambda^\infty$, the argument in the previous paragraph gives $\sigma^m(x) =
\sigma^n(x)$, and hence
\[
\nu\sigma^n(x) = x = \mu\sigma^m(x) = \mu \sigma^n(x),
\]
which implies $S_\mu S^*_\nu \xi_x = \xi_{\mu\sigma^n(x)} = \xi_x$ and $\pi_S(a)\xi_x=0$. Thus
$\pi_S(a)=0$.

To see that $a\not=0$, we choose $z \in \TT^k$ such that $z^{m-n} = -1$. Then $\gamma_z(s_\mu
s^*_\nu s_\lambda s^*_\lambda)  = -s_\mu s^*_\nu s_\lambda s^*_\lambda$, and hence
\[
\pi_S(a + \gamma_z(a)) = \pi_S(2s_\lambda s^*_\lambda) = 2S_\lambda S^*_\lambda \not= 0,
\]
forcing $a \not = 0$. Thus $C^*(\Lambda)$ is not simple.

Now suppose that $\Lambda$ is not cofinal. By Lemma~\ref{lem:what is cofinality}, there exist $v
\in \Lambda^0$ and $x \in \Lambda^\infty$ such that $v\Lambda x(n) = \emptyset$ for all $n \in
\NN^k$. Let
\[
[x]_\sigma := \{y \in \Lambda^\infty :\text{ there exist } p,q \in \NN^k
        \text{ such that } \sigma^p(x) = \sigma^q(y)\}.
\]
We claim that
\begin{equation}\label{eq:Wx property}
y \in [x]_\sigma\Longrightarrow r(y) \not= v.
\end{equation}
To see this, fix $y \in [x]_\sigma$ and $p,q \in \NN^k$ such that $\sigma^p(x) = \sigma^q(y)$. Then
$x(p)= y(q)$ and hence $v\Lambda y(q) = \emptyset$ by choice of $x$. In particular, $y(0,q)
\not\in v\Lambda y(q)$, so $r(y) \not= v$, as claimed.

We now consider the subspace $\Hh_x := \clsp\{\xi_y : y \in [x]_\sigma\}$ of
$\ell^2(\Lambda^\infty)$. For $y \in [x]_\sigma$ and $s(\lambda)=r(y)$, we have $\lambda y \in
[x]_\sigma$, and hence $\Hh_x$ is invariant for $S_\lambda$. On the other hand, $S_\lambda^*\xi_y$
vanishes unless $y(0,d(\lambda))=\lambda$, and then $S_\lambda^*\xi_y=\xi_{\sigma^{d(\lambda)}(y)}$, which
also belongs to $\Hh_x$. Thus $\Hh_x$ is reducing for $\pi_S$, and $\phi_x : a \mapsto
\pi_S(a)|_{\Hh_x}$ is a homomorphism of $C^*(\Lambda)$ into $\Bb(\Hh_x)$. Since $\phi_x(s_{r(x)})
\xi_x = \xi_x \not= 0$, $\ker\phi_x$ is not all of $C^*(\Lambda)$. Equation~\eqref{eq:Wx property},
on the other hand, implies that $\phi_x(s_v) \xi_y = 0$ for all $y \in [x]_\sigma$, so
$s_v\in\ker\phi_x$. Thus $C^*(\Lambda)$ is not simple.
\end{proof}

\end{document}